\newcommand{\U}{\mathcal {U}}
\newcommand{\fin}[0]{{\subset\!\!\!\subset}}
\theoremstyle{thmstyleone}%
\newtheorem{theorem}{Theorem}
\newtheorem{proposition}[theorem]{Proposition}
\newtheorem{lemma}[theorem]{Lemma}
\newtheorem{corollary}[theorem]{Corollary}
\theoremstyle{definition}
\newtheorem{definition}{Definition}%
\newtheorem{remark}{Remark}%
\begin{document}
	
	\title[Topological asymptotic dimension]{Topological asymptotic dimension}
	
	
	\author{\fnm{Massoud} \sur{Amini}\email{mamini@modares.ac.ir, mamini@ipm.ir}\footnote{The  author was supported by a grant from the Iran National Science Foundation (INSF, Grant No. 4027151).}}

	\affil{\orgdiv{Faculty of Mathematical Sciences}, \orgname{Tarbiat Modares University}, \city{Tehran}, \postcode{14115-134}, \country{Iran}}

	
	\abstract{We initiate a study of asymptotic dimension for locally compact groups. This notion extends the existing invariant for discrete groups and is shown to be finite for a large class of residually compact groups. Along the way, the notion of Hirsch length is extended to topological groups and classical results of Hirsch and Malcev are extended using a topological version of the Poincar\'{e} lemma. We show that polycyclic-by-compact groups and compactly generated, topologically virtually nilpotent groups are residually compact, and that compactly generated nilpotent groups are polycyclic-by-compact. We prove that for compactly generated, solvable-by-compact
		groups the  asymptotic dimension is majorized by the Hirsch length, and equality holds for polycyclic-by-compact groups. We extend the class of elementary amenable  groups beyond the discrete case and show that topologically elementary amenable groups with finite Hirsch length have finite asymptotic dimension. We prove that a topologically elementary amenable group of finite Hirsch length with no nontrivial locally elliptic normal closed subgroup is solvable-by-compact. Finally, we show that a totally disconnected, locally compact, second countable [SIN]-group has finite asymptotic dimension, if all of its discrete quotients are so.
	}

	\keywords{asymptotic dimension, Hirsch length, polycyclic-by-compact, solvable-by-compact, topologically virtually nilpotent,  topologically elementary amenable}
	
	
	\pacs[MSC Classification]{Primary 51F30; Secondary 53C23}

	\maketitle

\section{Introduction} \label{int}
The study of finitely generated discrete groups has witnessed two historical developments. The first is celebrated work of Hirsch on the structure theory of infinite solvable groups in the 30's, where he initiated an invariant (now called the Hirsch length) and showed that many structural arguments could be based on induction on this length, quite like what is done for nilpotent groups using the niloptency class \cite{h}. The second is pioneering work of Gromov in the 90's on asymptotic invariants of infinite groups \cite{g}. These two developments are interestingly related, as the Hirsch length is known to majorize the Gromov asymptotic dimension in many cases \cite{bcl}, \cite{ds}. 

Little is known about both invariants beyond the discrete case. The Hirsch length has technical difficulties when one deals with topological groups. The main source of complication is the failure of the second isomorphism theorem for topological groups which makes the usual definition not well-defined. The Gromov invariant has a better situation, as it is defined as soon as we have a proper, translation invariant metric 
which generates  topology of the group (so-called a plig metric \cite{hp}). However, it is not easy to control the invariant not to blow up. The key tool to maintain such a control is to relate the two invariants, as in the discrete case. This is the main objective of the current paper. 

Here instead of finitely generated discrete groups one has to work with compactly generated locally compact groups (usually second countable, to make sure a plig metric exists). There are several topological complications to be addressed, but the outcome is relatively satisfactory: the invariants are well-defined and finite in many interesting cases and interrelate to shed a light on the structure theory of certain natural classes of topological groups. 

In Section \ref{Box} we introduce box spaces and define asymptotic dimension of locally compact (second countable) groups and show that the invariant is finite for large classes of residually compact groups. In Section \ref{tvn} we define the Hirsch length for topological groups and prove the Hirsch formula, which is critical for inductive arguments based on the Hirsch length. This section also deals with a natural extension of virtually nilpotent groups in the topological realm, which are shown to be residually compact and so of finite asymptotic dimension. Same is done in Section \ref{ea} for topologically elementary amenable groups. A small final epilogue in Section \ref{td} is devoted to finiteness of the asymptotic dimension of totally disconnected, locally compact, second countable groups.    

\section{Box spaces and asymptotic dimension} \label{Box}

In this section we extend the notion of Box space and its asymptotic dimension from the class of discrete residually finite groups to that of locally compact residually compact groups. This is not a trivial task and we have to overcome several topological technicalities.

Throughout the rest of this section,  $G$ is a locally compact, $\sigma$-compact, Hausdorff group and $H\leq G$ is a closed subgroup. We denote both the identity element and trivial subgroup of $G$ by 1. For a complex or real valued function $f$ on $G$, the left translate of $f$ is defined by $\ell_gf(h)=f(g^{-1}h)$, for $g,h\in G$. We use the standard notations $H^g:=gHg^{-1}$ and $g^H:=\{hgh^{-1}: h\in H\}$, for $g\in G$. We also denote a finite subset $E$ of $G$ by $E\fin G$ and a normal subgroup $H$ of $G$ by $H\trianglelefteq G$.

Recall that $H$ is called {\it cocompact} if there is a compact subset $K\subseteq G$ with $G=KH$. For a locally compact group $G$, a closed subgroup $H$ is cocompact iff the quotient space $G/H$ is compact (combine \cite[Lemma 2.C.9]{ch}, with the idea of the proof of \cite[Proposition 18]{th}). This is the topological analog of subgroups of finite index.

\vspace{.3cm}
\begin{definition}\label{rc}
 A {\it residually compact} (resp.,  {\it residually finite}) approximation ($RC$-app.; resp., $RF$-app.) of $G$ is a decreasing sequence  of  cocompact closed (resp., open) subgroups $G_n\leq G$ with $\bigcap_{n\geq 1} G_n=1$. If $G$ has an $RC$-app. (resp., $RF$-app.), we say that $G$ is  {\it residually compact} (resp., {\it residually finite}) ($RC$; res., $RF$).
\end{definition}

\vspace{.3cm}
All compact groups are $RC$. On the other hand, an uncountable compact group is $RC$ but not $RF$. Note that for an $RF$-app. $(G_n)$, each quotient space $G/G_n$ is both compact and discrete, and so finite, that is, $G_n$ is of finite index.

 We shall introduce an uncountable version of the above notion in Section \ref{ea} which also applies to non compactly generated groups. The two notions are equivalent for locally compact, $\sigma$-compact  (or equivalently, compactly generated) groups. 

The equivalence of conditions $(i)$-$(iii)$ in the next definition could be proved by an argument verbatim to that of \cite[Lemma 23.7]{swz}. 

\vspace{.3cm}
\begin{definition}\label{rrc}
 An $RC$-app. or $RF$-app. $(G_n)$ of $G$ is called {\it regular} if it satisfies any of the following equivalent conditions:

 $(i)$ for each $g\neq 1$ there is $n\geq 1$ with $G_n\bigcap g^GG_n=\emptyset$,

 $(ii)$ $\bigcap_{n\geq 1}\bigcup_{g\in G} gG_ng^{-1}=\{1\}$;

 $(iii)$ for each $E\fin G$ there is $n\geq 1$ such that the quotient map: $G\to G/G_n$ is injective on $Eg$, for each $g\in G$.
\end{definition}

\vspace{.3cm}
The above equivalent conditions trivially hold when each $G_n$ is normal. When $G$ is discrete, in a residually finite approximation ($RF$-app.) $(G_n)$, the subgroups $G_n$ are of finite index. In this case, $RF$-approximations   may always be chosen to be regular, as we may replace $G_n$'s with smaller normal subgroups of finite index via the Poincar\'{e} lemma: For a discrete group $G$ and subgroup $H\leq G$ of finite index $n:=[G:H]$, there is a normal subgroup  $N\trianglelefteq G$ with $N\leq H$ and $[G:N]|\leq n!$. The standard proof of Poincar\'{e} lemma uses the fact that $|{\rm Sym}(G/H)|=[G:H]!$ and that for $N:=\bigcap_{g\in G} H^g$, a copy of $G/N$  embeds in ${\rm Sym}(G/H)$. This argument breaks down in the topological version, since while the above $N$ is a closed normal subgroup contained in $H$ and $G/N$  continuously embeds in ${\rm Iso}(G/H)$, the group of isometries of the compact metric space $G/H$, the copy of $G/N$ fails to be closed in ${\rm Iso}(G/H)$. For instance, $G=SL(2, \mathbb R)$ has many cocompact subgroups (cocompact lattices, as well as the upper triangular subgroup), but no proper normal cocompact subgroup. However the situation could be saved in the topological case by relaxing the cocompactness to a local version. 

\vspace{.3cm}
\begin{definition}\label{lcc}
	A closed subgroup $H$ is called {\it locally cocompact} if $G/H$ has zero asymptotic dimension with respect to the  quotient metric induced by an adapted metric on $G$.  
\end{definition}

\vspace{.3cm}
A few comments are in order: First that a closed normal subgroup $N$ is locally cocompact iff the quotient group $G/N$ is locally elliptic (i.e., its closed subgroups generated by compact subsets are compact). Since local ellipticity is indeed a local version of compactness in coarse geometric sense (rather than topological sense), this justifies the term used here. Second, $G$ is throughout assumed to be $\sigma$-compact, and so it always has an adapted metric \cite[Proposition 4.A.2]{ch}, and each two such metrics are coarsely equivalent \cite[Corollary 4.A.6(2)]{ch}, and the same holds for the induced metrics on the quotient space, and so the above notion is well defined. Third, the quotient metric is not necessarily $G$-invariant (though the quotient space $G/H$ is a disjoint union of $\sigma$-compact spaces--namely, the cosets of $H$--and so paracompact \cite[Theorem 7.3]{d}, the action of $G$ on $G/H$ is not proper--unless, for instance $H$ is compact--and so results such as \cite[Proposition 4.C.8(1)]{ch} do not apply).       

\vspace{.3cm}
\begin{lemma}  [Poincar\'{e}] \label{pl}
For each cocompact closed  subgroup $H\leq G$, if there is an adapted metric on $G$ inducing a $G$-invariant compatible metric on $G/H$, then there is a locally cocompact closed normal subgroup $N\trianglelefteq G$ with $N\leq H$.
\end{lemma}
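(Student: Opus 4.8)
The plan is to imitate the classical Poincar\'{e} argument, replacing the finite symmetric group $\mathrm{Sym}(G/H)$ by the isometry group of the compact metric space $G/H$. First I would set $N:=\bigcap_{g\in G}gHg^{-1}$, the normal core of $H$. Since $H$ is closed and conjugation is a homeomorphism, each $gHg^{-1}$ is closed, so $N$ is a closed subgroup; it is normal by construction and contained in $H$ (take $g=1$). Thus $N$ is automatically a closed normal subgroup with $N\leq H$, and it remains only to verify that $N$ is locally cocompact. By the remark following Definition~\ref{lcc}, this is equivalent to showing that $G/N$ is locally elliptic.

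Next I would exploit the hypothesis. Let $d$ be the $G$-invariant compatible metric on $G/H$. The left-translation action $g\cdot(xH)=gxH$ is then by isometries, so it defines a continuous homomorphism $\rho\colon G\to \mathrm{Iso}(G/H,d)$. An element $g$ lies in $\ker\rho$ iff $gxH=xH$ for all $x$, i.e. iff $g\in\bigcap_x xHx^{-1}=N$; hence $\rho$ descends to a continuous injective homomorphism $\bar\rho\colon G/N\hookrightarrow \mathrm{Iso}(G/H,d)$. Because $H$ is cocompact, $G/H$ is compact, and the isometry group of a compact metric space is compact in the topology of uniform convergence by the Arzel\`{a}--Ascoli theorem (isometries are $1$-Lipschitz, hence form an equicontinuous family, and $\mathrm{Iso}(G/H,d)$ is closed in $C(G/H,G/H)$). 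Writing $K:=\mathrm{Iso}(G/H,d)$ and $L:=\overline{\bar\rho(G/N)}\leq K$, I obtain a continuous injective homomorphism of the locally compact, $\sigma$-compact group $G/N$ onto a dense subgroup of the compact group $L$.

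It remains to deduce local ellipticity of $G/N$ from this embedding, and this is the step I expect to be the main obstacle. The embedding alone is not enough: the inclusion $\mathbb{R}\hookrightarrow b\mathbb{R}$ of $\mathbb{R}$ into its Bohr compactification is a continuous injection of a non-locally-elliptic group into a compact one, so the cocompactness hypothesis must be used essentially. The leverage I would use is that the point stabiliser $L_0:=\mathrm{Stab}_L(1\cdot H)$ is a compact subgroup of $L$ with $\bar\rho^{-1}(L_0)=H/N$, that $L$ acts transitively on $G/H$ (it contains the dense, already transitive image of $G$), and that $L$ acts faithfully (being a subgroup of $\mathrm{Iso}(G/H)$), so $L_0$ has trivial normal core in $L$. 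The cocompactness of $H$ makes the induced map $G/H=(G/N)/(H/N)\to L/L_0$ a continuous bijection out of a compact space, hence a homeomorphism, yielding $L=\bar\rho(G/N)\,L_0$. The task is then to absorb the factor $L_0$, i.e. to show $L_0\subseteq\bar\rho(G/N)$, equivalently that $\bar\rho$ has closed image; combined with the open mapping theorem for the $\sigma$-compact group $G/N$ this would identify $G/N\cong L$ as compact, which is a fortiori locally elliptic. I would attack the closedness using $\sigma$-compactness together with a Baire-category argument inside the compact group $L$, the aim being to rule out precisely the $\mathbb{R}\hookrightarrow b\mathbb{R}$ pathology. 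Controlling this non-closedness---and more generally preventing any unbounded coarse geometry of $G/N$ from surviving inside the compact $L$---is where the real difficulty lies, and it is exactly here that the invariant-metric and cocompactness hypotheses must be brought to bear.
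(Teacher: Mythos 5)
Your construction of $N=\bigcap_{g\in G}H^g$ and of the continuous injective homomorphism $G/N\hookrightarrow \mathrm{Iso}(G/H)$ into a compact metric group is exactly the paper's starting point, and that part is correct. The gap is in what you do afterwards: you try to upgrade the embedding to a topological isomorphism onto a closed subgroup, i.e.\ to prove that $G/N$ is compact, and you correctly identify this as the hard step --- but it is the wrong target. The lemma only asserts that $N$ is \emph{locally} cocompact, which by Definition~\ref{lcc} means precisely that $G/N$ has asymptotic dimension zero (equivalently, by the remark following that definition and \cite[Proposition 4.D.4]{ch}, that $G/N$ is locally elliptic). The paper states explicitly, in the discussion preceding Definition~\ref{lcc}, that the image of $G/N$ in $\mathrm{Iso}(G/H)$ need \emph{not} be closed, and that the notion of local cocompactness is introduced exactly so that the Poincar\'{e} argument survives this failure. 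Once the continuous injection into the compact group $\mathrm{Iso}(G/H)$ is in hand, the paper simply observes that $G/N$ inherits a bounded metric from $\mathrm{Iso}(G/H)$, that a bounded metric space has asymptotic dimension zero, and hence that $G/N$ is locally elliptic; no closedness of the image, Baire category argument, or ``absorption'' of the stabiliser $L_0$ is needed.

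Beyond being unnecessary, the route you sketch cannot be completed in general: if $\bar\rho(G/N)$ is dense but proper in $L$, then $L_0\cap\bar\rho(G/N)=\bar\rho(H/N)$ is merely dense in $L_0$, so the inclusion $L_0\subseteq\bar\rho(G/N)$ you are trying to establish fails in exactly the situations the lemma is designed to cover. As written, your proposal would prove the lemma only under the additional hypothesis that the image is closed (which would make $N$ genuinely cocompact, a stronger conclusion than claimed), and the step you flag as the ``real difficulty'' is not a missing technicality but the point at which your intended statement diverges from the one actually being proved.
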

\begin{proof}
Choose an adapted metric on $G$ with $G$-invariant quotient metric on $G/H$. Consider the closed normal subgroup $N:=\bigcap_{g\in G} H^g$. Then $G/H$ is a compact metric space on which $G/N$ acts from left by 
$$xN\cdot gH=xgH \ (x,g\in G)$$
This is well defined as $N$ is normal and isometric as the metric on $G/H$ is left translation invariant. In particular, $G/N$ continuously embeds in the compact metric group ${\rm Iso}(G/H)$. Though the image is not closed, but $G/N$ inherits an adapted metric from the metric group ${\rm Iso}(G/H)$, which has zero asymptotic dimension (as it is compact), thus asdim$(G/N)$=0, and so it is locally elliptic by \cite[Proposition 4.D.4]{ch}. 
\end{proof}

\vspace{.3cm}
Note that by the above proof, an open cocompact subgroup $H$ contains a $\mathcal G_\delta$ (countable intersection of open sets) locally cocompact normal subgroup $N$. This is important, since if $N$ is normal and $\mathcal G_\delta$, then for each closed subgroup $K$, $NK$ is locally compact (see Lemma \ref{sec}).

\vspace{.3cm}
\begin{corollary}\label{rca}
	If $G$ is a residually compact group with an $RC$-app. (resp., $RF$-app.) $\{G_n\}$ such that each quotient space $G/G_n$ has a $G$-invariant compatible metric, then $G$ also has a regular $RC$-app. (resp., $RF$-app.).
\end{corollary}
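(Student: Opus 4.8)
The plan is to build the required regular approximation out of the \emph{normal cores} of the given subgroups, the whole difficulty being to upgrade the merely \emph{local} cocompactness provided by Lemma \ref{pl} to genuine cocompactness. First I would apply the Poincar\'e lemma to each $G_n$: by hypothesis $G/G_n$ carries a $G$-invariant compatible metric, so Lemma \ref{pl} applies and produces the closed normal subgroup $N_n:=\bigcap_{g\in G}G_n^g\trianglelefteq G$ with $N_n\leq G_n$ and $N_n$ locally cocompact in the sense of Definition \ref{lcc}. Two formal properties are then free of charge. Since $(G_n)$ is decreasing, so are the cores: $N_{n+1}=\bigcap_g G_{n+1}^g\subseteq\bigcap_g G_n^g=N_n$; and since $N_n\subseteq G_n$ we get $\bigcap_n N_n\subseteq\bigcap_n G_n=1$. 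Thus $\{N_n\}$ is already a decreasing chain of closed \emph{normal} subgroups with trivial intersection, and, as observed right after Definition \ref{rrc}, an approximation by normal subgroups is automatically regular. Everything therefore reduces to a single point: that each $N_n$ is in fact cocompact.

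This is the crux, and here is where I would spend the effort. I would invoke the characterization recorded immediately after Definition \ref{lcc}: a closed normal subgroup $N$ is locally cocompact exactly when $G/N$ is locally elliptic, i.e. every closed subgroup of $G/N$ generated by a compact set is compact. Apply this with $N=N_n$ and set $Q:=G/N_n$, so $Q$ is locally elliptic. But $Q$ is a quotient of the compactly generated group $G$, hence itself compactly generated, say $Q=\overline{\langle C\rangle}$ for some compact $C\subseteq Q$. Then $Q$ is a closed subgroup of itself generated by a compact set, so local ellipticity forces $Q=G/N_n$ to be compact; equivalently, $N_n$ is cocompact. I expect this local-to-global ellipticity step to be the main obstacle, and it is exactly the place where the hypothesis is indispensable: absent a $G$-invariant metric on $G/G_n$ — as in the $SL(2,\mathbb{R})$ situation discussed before Definition \ref{lcc} — Lemma \ref{pl} is unavailable and there need be no proper normal cocompact subgroup at all.

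Assembling the pieces, $\{N_n\}$ is a decreasing sequence of cocompact closed normal subgroups of $G$ with $\bigcap_n N_n=1$, that is, an $RC$-approximation which is regular because its members are normal; this is the regular $RC$-approximation asserted. The one bookkeeping issue I would flag, but treat as a formality rather than a genuine difficulty, is the matching of hypotheses when calling Lemma \ref{pl}: the corollary supplies a $G$-invariant compatible metric on each $G/G_n$, while the lemma is phrased via an adapted metric on $G$ inducing it. Since $G$ is $\sigma$-compact it carries an adapted metric, and the construction in the proof of Lemma \ref{pl} uses only the invariant metric on the quotient to realize $G/N_n$ inside the compact isometry group $Iso(G/G_n)$; so this matching is routine and the real content remains the cocompactness upgrade above.
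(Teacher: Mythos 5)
Your proposal is correct and follows essentially the paper's own (implicit) route: the corollary is stated as an immediate consequence of Lemma \ref{pl}, the intended argument being exactly yours---pass to the normal cores $N_n=\bigcap_{g\in G}G_n^g$ given by the Poincar\'{e} lemma, note they are decreasing with trivial intersection, and use normality to get regularity for free. Your key local-to-global step (locally elliptic plus compactly generated implies compact, so locally cocompact normal subgroups are cocompact) is precisely the argument the paper itself runs in the $SL(2,\mathbb{R})$ discussion immediately after the corollary, and your appeal to compact generation is consistent with the section's standing identification of $\sigma$-compact locally compact groups with compactly generated ones.
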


\vspace{.3cm}
The assumption that the quotient space $G/H$ has a $G$-invariant metric could be quite restrictive, for instance, in the above example where $G=SL(2,\mathbb R)$ and $H$ is a cocompact lattice in $G$, there is no invariant metric on the orbit space $G/H$, since otherwise, by the above lemma $G$ must have a locally cocompact normal subgroup $N$, but then since $G=SL(2,\mathbb R)$ is compactly generated, and so is $G/N$ \cite[Proposition 2.C.8(4)]{ch}. This forces $G/N$ to be compact, i.e., $N$ must be a cocompact normal subgroup, which is not possible, as $N$ is contained in a cocompact lattice. On the other hand, this assumption is automatic for $G/G_n$ if we work with an $RF$-approximation. 

The problem of existence of a $G$-invariant compatible (i.e., giving back the topology) metric on a Hausdorff topological group $G$ was independently solved by Birkhoff \cite{bi} and Kakutani \cite{k}: a topological group is metrizable if and only if it is Hausdorff and first countable, and in this case, the metric can be taken to be $G$-invariant (c.f., \cite[section 1.22]{mz}). If we ask for a $G$-invariant compatible metric which is also proper (i.e., the
balls are relatively compact), the problem is solved by Struble \cite{st}: a locally compact
group $G$ has a proper $G$-invariant compatible metric if and only if it is second
countable. 
The same questions on coset spaces are explored by Delaroche in \cite{de}. Let $H$ be
a closed subgroup of a Hausdorff topological group $G$. When $G$ is metrizable,
the quotient space $G/H$ has a quotient metric, however,
even if $G$ is a second countable, there does not always
exist a $G$-invariant compatible metric on $G/H$ (see, \cite[Example 2.3]{de}).  When $H$ is an open subgroup of a second countable group $G$,  a $G$-invariant compatible metric exists on $G/H$ iff $H$ is {\it almost normal} (i.e., for every
finite subset $F$ of $G$ there exists a finite subset
$E$ of G such that $HF \subseteq EH$) \cite[Theorem 2.15]{de}, but this is not sufficient for closed subgroups \cite[Example 2.14]{de}.

A closed subgroup $H\leq G$ is called a {\it characteristic subgroup} if it is invariant under $Aut(G)$ consisting of  continuous automorphisms of $G$. Since inner automorphisms are automatically continuous, each  characteristic subgroup is normal (but the converse is not correct even in discrete groups, as for instance $G\times 1$ is a normal subgroup of $G\times G$, but is not preserved under the flip automorphism).

One could use the same argument to prove (an extension  of) Poincar\'{e} lemma for the characteristic subgroup  $N:=\bigcap_{\sigma\in Aut(G)} \sigma(H)$.  We record this result in the next lemma for future use.

\vspace{.3cm}
\begin{lemma}  \label{char}
For each cocompact closed  subgroup $H\leq G$, if there is an adapted metric on $G$ inducing a $G$-invariant compatible metric on $G/H$, then there is a locally cocompact closed characteristic subgroup $N\trianglelefteq G$ with $N\leq H$. In particular, if $G$ is abelian, every closed cocompact subgroup $H\leq G$ contains a locally cocompact closed characteristic subgroup $N$. 
\end{lemma}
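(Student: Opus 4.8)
The plan is to imitate the proof of Lemma \ref{pl} almost verbatim, replacing the normal core $\bigcap_{g\in G}H^g$ by the characteristic core $N:=\bigcap_{\sigma\in Aut(G)}\sigma(H)$, and replacing the single compact space $G/H$ by a product of copies of it indexed by automorphisms. First I would record the elementary bookkeeping: $N$ is closed as an intersection of closed subgroups, $N\leq H$ (take $\sigma=\mathrm{id}$), and $N$ is characteristic, since for $\tau\in Aut(G)$ injectivity gives $\tau(N)=\bigcap_\sigma\tau\sigma(H)=\bigcap_{\sigma'}\sigma'(H)=N$ after reindexing $\sigma'=\tau\sigma$; in particular $N\trianglelefteq G$. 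Each $\sigma(H)$ is again a cocompact closed subgroup, because the homeomorphism $\sigma$ descends to a homeomorphism $\theta_\sigma\colon G/H\to G/\sigma(H)$, $gH\mapsto\sigma(g)\sigma(H)$, so that $G/\sigma(H)$ is compact.

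Next I would transport the invariant metric. Fixing, by hypothesis, an adapted metric on $G$ whose quotient metric $\bar d$ on $G/H$ is $G$-invariant, I would pull $\bar d$ back along $\theta_\sigma$ to a compatible metric $\bar d_\sigma$ on $G/\sigma(H)$. Using $\theta_\sigma^{-1}(x\sigma(H))=\sigma^{-1}(x)H$ together with the $G$-invariance of $\bar d$, a one-line computation shows that $\bar d_\sigma$ is invariant under the left translation action of $G$ on $G/\sigma(H)$; each factor thereby becomes a compact metric space on which $G$ acts isometrically, with diameter equal to that of $(G/H,\bar d)$.

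The core of the argument is the embedding into a compact isometry group, exactly as in Lemma \ref{pl}, and here the main obstacle appears: the product over all of $Aut(G)$ is compact (Tychonoff) but generally not metrizable, so its isometry group need not be compact. To circumvent this I would first reduce to countably many automorphisms. Since $G$ is locally compact, $\sigma$-compact, and (by the existence of the compatible quotient metric) metrizable, it is second countable, hence hereditarily Lindel\"of; thus the open cover $\{G\setminus\sigma(H)\}_\sigma$ of the open set $G\setminus N$ admits a countable subcover, yielding automorphisms $\sigma_1,\sigma_2,\dots$ with $\bigcap_n\sigma_n(H)=N$. The diagonal left action of $G$ on the compact metric space $\prod_n\bigl(G/\sigma_n(H),\bar d_{\sigma_n}\bigr)$, metrized by the $G$-invariant compatible metric $d\bigl((x_n),(y_n)\bigr)=\sum_n 2^{-n}\bar d_{\sigma_n}(x_n,y_n)$, is isometric, and its kernel is precisely $N$: the base-point stabilizer is $\bigcap_n\sigma_n(H)=N$, while every $n\in N$ acts trivially because $x^{-1}nx\in N\subseteq\sigma_m(H)$ for all $x$ and $m$ by normality of $N$. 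This produces a continuous injective homomorphism $G/N\hookrightarrow \mathrm{Iso}\bigl(\prod_n G/\sigma_n(H)\bigr)$ into a compact metric group.

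From this point the proof is a transcription of Lemma \ref{pl}: $G/N$ inherits an adapted metric from the compact group $\mathrm{Iso}\bigl(\prod_n G/\sigma_n(H)\bigr)$, whose asymptotic dimension is $0$, so $\mathrm{asdim}(G/N)=0$; by \cite[Proposition 4.D.4]{ch} the group $G/N$ is locally elliptic, and by the remark following Definition \ref{lcc} this is equivalent to $N$ being locally cocompact. Hence $N$ is a locally cocompact closed characteristic subgroup with $N\leq H$. For the concluding ``in particular'' clause, I would observe that when $G$ is abelian the left action of $G$ on $G/H$ factors through the translation action of the quotient group $G/H$ on itself, which preserves any translation-invariant metric; such a compatible metric exists by the Birkhoff--Kakutani/Struble theorems quoted above, so the metric hypothesis is then automatic and the preceding argument applies unconditionally. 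The step I expect to be most delicate is the countable reduction, together with verifying that the chosen metric on the product is simultaneously compatible, $G$-invariant, and endowed with a compact isometry group; everything else simply mirrors Lemma \ref{pl}.
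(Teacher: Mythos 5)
Your overall plan is faithful to what the paper intends: the paper's own ``proof'' of this lemma is the one-line remark that the argument of Lemma \ref{pl} applies verbatim to $N:=\bigcap_{\sigma\in Aut(G)}\sigma(H)$, and you correctly notice that it does \emph{not} apply verbatim --- the kernel of the left action of $G$ on $G/H$ is the normal core $\bigcap_{g}H^{g}$, which may be strictly larger than the characteristic core, so $G/N\to Iso(G/H)$ fails to be injective. Your repair, acting on a product $\prod_n G/\sigma_n(H)$, is a genuinely more careful route than the paper's. Two steps nevertheless do not hold up. First, the countable reduction: you infer that $G$ is metrizable ``by the existence of the compatible quotient metric,'' but the hypothesis only supplies a compatible metric on $G/H$, not on $G$; an adapted metric on a $\sigma$-compact locally compact group need not induce its topology, so $G$ need not be second countable or hereditarily Lindel\"of, and the cover $\{G\setminus\sigma(H)\}_{\sigma}$ of $G\setminus N$ need not admit a countable subcover.

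Second, and fatally, the concluding step (which you transcribe from Lemma \ref{pl}) is invalid: the metric that $G/N$ inherits from the compact group $\mathrm{Iso}\bigl(\prod_n G/\sigma_n(H)\bigr)$ is bounded but not proper when the image is not closed, hence it is not an adapted metric, and a bounded metric on $G/N$ says nothing about $\mathrm{asdim}(G/N)$ in its canonical coarse structure. This gap cannot be closed, because the statement itself fails: take $G=\mathbb{R}$ and $H=\mathbb{Z}$. The hypotheses hold (the euclidean metric is adapted and induces the invariant arc metric on $\mathbb{R}/\mathbb{Z}$), the continuous automorphisms of $\mathbb{R}$ are the maps $x\mapsto\lambda x$ with $\lambda\neq 0$, so the only closed characteristic subgroup of $\mathbb{R}$ contained in $\mathbb{Z}$ is $\{0\}$ --- and $\{0\}$ is not locally cocompact, since $\mathrm{asdim}(\mathbb{R})=1$. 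In your construction this is exactly the point at which $\mathbb{R}$ embeds as a dense, non-closed one-parameter subgroup of the isometry group of a two-torus, and the bounded induced metric is coarsely unrelated to the adapted one. So no argument along these lines can succeed; already the ``in particular'' clause for abelian $G$ is false, and the lemma requires either a different notion of characteristic core or a weaker conclusion.
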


\vspace{.3cm}
The fact that in the above lemma gives the existence of locally cocompact closed characteristic subgroups for locally compact, $\sigma$-compact, abelian groups seems annoying, but in fact it is quite useful, since for virtually nilpotent groups (which are the main objects of interest in this section), one could reduce to the case of abelian groups (using an inductive argument) and use the above result.

Let  $\tau=(H_n)$ and $\sigma=(G_n)$ be decreasing sequences of cocompact subgroups of $G$, then we write $\tau\precsim\sigma$ if for each $n\geq 1$ there is $m\geq 1$ with $G_m\subseteq H_n$, and write $\tau\sim\sigma$ if $\tau\precsim\sigma$ and $\sigma\precsim\tau$. The set $\Lambda(G)$ of all $RC$-apps. of $G$ is upward closed w.r.t. $\precsim$. A maximum element (if any) is called a {\it dominating} $RC$-app. It follows from the above lemma that every compact subgroup $H\leq G$ is part of a regular $RC$-app., thus a regular $RC$-app. $(G_n)$ is dominating iff for each compact subgroup $H\leq G$, there is $n\geq 1$ with $G_n\leq H$. Dominating $RC$-apps. are important as they give the lowest possible asymptotic dimension (among all $RC$-apps.). The corresponding box space is denoted by $\Box_s G$ and is called the {\it standard} box space. Note that (as in the discrete case)  ${\rm asdim}(\Box_s G)$
is well defined (c.f.  \cite[Remark 3.21]{swz}.)

While each finitely generated $RF$ group has a dominating regular $RF$-app. (consisting of normal subgroups), this may fail for compactly generated $RC$ groups and  regular $RC$-apps. This is because the former is a consequence of the fact that finitely generated $RF$ groups have at most countably many normal subgroups of finite index, while compactly generated $RC$ groups may have uncountably many normal cocompact subgroups (Of course, for $RC$ groups with at most countably many normal cocompact subgroups, one could always find a  dominating regular $RC$-app.)

Next let $G$ act (from right) by isometries on a metric space $(X,d)$. The
action is {\it locally bounded} if $K\cdot x$ is bounded for every $x \in X$ and every compact
set $K\subseteq G$ and it is  {\it bounded} if every orbit is bounded. The action is (metrically) {\it proper} if $d(x\cdot g, x)\to\infty$, for every $x\in X$ as $g\to \infty$ in $G$. Let  $\pi:X\to X/G$ be the corresponding quotient map. Then
$$D(\pi(x),\pi(y)):=\inf_{g,h\in G} d(x\cdot g, y\cdot h)$$
defines a metric on the quotient space $X/G$.

The next lemma is well-known.

\vspace{.3cm}
\begin{lemma}\label{pd}
When the action is isometric, properly-discontinuous and cobounded, the metric $D$ induces the quotient topology on $X/G$.
\end{lemma}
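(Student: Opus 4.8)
The plan is to prove the two topologies coincide by showing that the canonical surjection $\pi\colon (X,d)\to (X/G,D)$ is simultaneously continuous and open; combined with the fact that $\pi$ is by construction a quotient map onto $(X/G,\mathcal T_q)$, where $\mathcal T_q$ denotes the quotient topology, this forces $\mathcal T_q$ to agree with the metric topology $\mathcal T_D$. The computational backbone is that, since the action is isometric, the metric $d$ is right $G$-invariant, so for $\bar x=\pi(x)$ and $\bar y=\pi(y)$,
\[
D(\bar x,\bar y)=\inf_{g,h\in G} d(x\cdot g,\,y\cdot h)=\inf_{k\in G} d(x\cdot k,\,y),
\]
i.e.\ $D(\bar x,\bar y)$ is just the distance from the orbit $x\cdot G$ to the point $y$. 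I would first record from this formula that $D$ is an honest metric: the triangle inequality follows by composing near-optimal group elements and invoking invariance; proper discontinuity forces the orbits to be closed, so distinct orbits lie a positive distance apart and $D$ separates points; and coboundedness yields a bounded set meeting every orbit, so $D$ is bounded by its diameter and hence finite-valued.

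For the inclusion $\mathcal T_D\subseteq\mathcal T_q$ I would simply take $g=h=1$ to obtain $D(\bar x,\bar y)\le d(x,y)$, so that $\pi$ is $1$-Lipschitz and therefore continuous; continuity of $\pi$ into $(X/G,\mathcal T_D)$ says exactly that every $D$-open set pulls back to a $d$-open set, which is precisely the statement that it is $\mathcal T_q$-open.

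For the reverse inclusion $\mathcal T_q\subseteq\mathcal T_D$, the key is that $\pi$ is an open map into $(X/G,\mathcal T_D)$, and here the isometric hypothesis does the work. Given $U\in\mathcal T_q$ and $\bar y=\pi(y)\in U$, the set $\pi^{-1}(U)$ is open and $G$-invariant, so I can choose $\varepsilon>0$ with $B_d(y,\varepsilon)\subseteq\pi^{-1}(U)$. I would then claim that the $D$-ball $B_D(\bar y,\varepsilon)$ lies in $U$: if $D(\bar y,\bar z)<\varepsilon$, then by the infimum formula there is $g\in G$ with $d(y\cdot g,z)<\varepsilon$, and since $g$ acts as an isometry, $z\in B_d(y\cdot g,\varepsilon)=B_d(y,\varepsilon)\cdot g\subseteq\pi^{-1}(U)\cdot g=\pi^{-1}(U)$, whence $\bar z\in U$. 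This exhibits $U$ as $D$-open and completes the identification.

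I expect the only genuine subtlety to be confirming that $D$ is a metric rather than a mere pseudometric, and this is exactly where the hypotheses enter: proper discontinuity (closedness of orbits, for point-separation) and coboundedness (finiteness of $D$), whereas the identification of the two topologies rests only on the $G$-invariance of $d$ coming from the isometric action. Care is needed in the open-map step to use both that $\pi^{-1}(U)$ is $G$-invariant and that each $g$ carries balls to balls, so that replacing the representative of $\bar y$ within its orbit does not leave $\pi^{-1}(U)$.
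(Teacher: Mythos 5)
Your proof is correct; the paper records this lemma as ``well-known'' and gives no argument, and your two-inclusion scheme --- $\pi$ is $1$-Lipschitz hence continuous for $\mathcal T_D\subseteq\mathcal T_q$, and the reduction $D(\bar x,\bar y)=\inf_{k\in G}d(x\cdot k,y)$ together with $G$-invariance of $\pi^{-1}(U)$ for $\mathcal T_q\subseteq\mathcal T_D$ --- is exactly the standard proof the paper leaves implicit. Two cosmetic remarks: finiteness of $D$ already follows from $D(\bar x,\bar y)\le d(x,y)$ without invoking coboundedness, and the point-separation step needs only that each point off a given orbit is at positive distance from that (closed) orbit, which is precisely what your infimum formula delivers, rather than a positive distance between two disjoint closed orbits in general.
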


\noindent A {\it length function} on $G$ is a  map $\ell: G\to \mathbb R^+$ which  satisfies

$(i)$ $\ell(gh)\leq \ell(g)+\ell(h)$;

$(ii)$ $\ell(g^{-1})=\ell(g)$;

$(iii)$ $\ell(g)=0\Leftrightarrow g=1$,

\noindent for each $g,h\in G$.

\vspace{.3cm}
If $G$ admits a
locally bounded action by isometries on some metric space $(X, d)$, then for every $x\in X$ the map $g\mapsto d(x\cdot g, x)$ is a length function on $G$, which is bounded
on compact subsets. It is known that $G$ admits
a proper length function (bounded
on compact subsets) if and only if $G$ is $\sigma$-compact \cite{c}. Also every
locally bounded action of $G$ by isometries is either bounded or proper, iff every length function on $G$ is either bounded or proper \cite{c} (This is called property $PL$ by  Cornulier, see also \cite{tv}).

Length functions also naturally induce proper, left translation  invariant metrics which generate the  topology on $G$. Such a metric is called a plig metric by Haagerup and  Przybyszewska \cite{hp}.
It is  easy to show that if a locally compact group $G$ admits a plig metric, then $G$
is second countable. Moreover, any two plig metrics  on a second countable group $G$ are coarsely equivalent, in the sense of Roe \cite{r2}.
Lubotzky, Moser and Ragunathan showed  that every compactly
generated second countable group has a plig metric \cite{lmr} (same holds for countable discrete groups by a result of  Tu \cite{t}). In the most general case, Haagerup and  Przybyszewska showed that every locally compact, second countable group
admits a plig metric (in a way that the balls have
exponential  growth w.r.t. the Haar measure). They also showed that the existence of a plig metric on $G$ implies that $G$ has bounded geometry.

Next let $G$ be a second-countable $RC$ group with  a plig metric $d_G$ and for an $RC$-app. $\sigma=(G_n)$, let $d_n$ be the corresponding quotient metric on $G/G_n$. We define the {\it box space} of the $RC$-app. $\sigma$ by
$$\Box_\sigma G:=\bigsqcup_{n\geq 1} (G/G_n, d_n),$$
with box metric $d_B$ which restricts to $d_n$ on  $G/G_n$ and $d_B(G/G_n, G/G_m)\to \infty$ as $n+m\to\infty$. When $d_G$ is induced by a length function $\ell$ on $G$, one such box metric is given by
$$d_B(gG_n, hG_m):=\ell(g)+\ell(h)+n+m,$$
for $g,h\in G$ and $n,m\geq 1$.

The following lemma is proved as in the discrete case \cite[Lemma 3.13]{swz} (Note that \cite[Lemma 2.11]{swz} and  \cite[Lemma 4.3.5]{ny} used in the proof of \cite[Lemma 3.13]{swz} are stated for general metric spaces and could be used in our setting).

\vspace{.3cm}
\begin{lemma} \label{basic}
Let $G$ be $RF$ with a given proper right invariant metric $d$ and left Haar measures $\lambda$. Let $G$ act on itself by right multiplication. For $d\geq 0$ and a given $RF$-app. $\sigma=(G_n)$, the following are equivalent:

$(i)$ $\sigma$ is regular and ${\rm asdim}(\Box_\sigma G)\leq d$;

$(ii)$ for each $R>0$ there is $n\geq 1$ and a uniformly bounded cover $\U=\{U_0,\cdots, U_d\}$ of $G$ consisting of mutually disjoint $G_n$-invariant sets, with Lebesgue number at most $R$;

$(iii)$ for each $\varepsilon>0$ and each $E\fin G$ there is $n\geq 1$ and  functions $f_i: G\to [0,1]$ with compact support $K_i$, with $K_i\cap K_ig=\varnothing$, for $g\neq 1$, satisfying $\|f_i-\ell_gf_i\|_\infty\leq \varepsilon$ and
$$\sum_{i=0}^{d}\sum_{h\in G_n}f_i(gh)=1\ \ (g\in G),$$
for $0\leq i\leq d$.
\end{lemma}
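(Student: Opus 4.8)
The plan is to follow the discrete blueprint of \cite[Lemma 3.13]{swz} and establish the cycle $(i)\Rightarrow(ii)\Rightarrow(iii)\Rightarrow(i)$, translating each combinatorial ingredient into its locally compact analogue: finite index becomes cocompactness (so that every fibre $G/G_n$ is a \emph{compact} metric space and covers are automatically uniformly bounded on each fibre), counting measure becomes the left Haar measure $\lambda$, indicator functions of finite sets become continuous $[0,1]$-valued functions of compact support, and the hypothesis ``$G_n$ normal'' is weakened to regularity of $\sigma$ (Definition \ref{rrc}), which is precisely what guarantees that $G\to G/G_n$ is eventually injective on any prescribed $E$-translate and hence transports geometric data between $G$ and $G/G_n$ without distortion.

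For $(i)\Rightarrow(ii)$ I would unwind ${\rm asdim}(\Box_\sigma G)\le d$ through the coloring characterisation of asymptotic dimension: for each $R$ there is a uniformly bounded cover of $\Box_\sigma G$ that splits into $d+1$ $R$-disjoint families. Since the cover is uniformly bounded (diameter $\le S$, say) while $d_B(G/G_n,G/G_m)\to\infty$, each of its members meets at most one fibre once $n+m$ is large, so the cover restricts to a cover of a single $G/G_n$; pulling the $d+1$ families back along the quotient map yields mutually disjoint $G_n$-invariant sets $U_0,\dots,U_d$ covering $G$. Regularity (Definition \ref{rrc}(iii)) ensures that this pullback neither collapses distances nor destroys the Lebesgue number on the scale $R$, which delivers $(ii)$; the regularity assumed in $(i)$ is exactly what legitimises the identification of the fibre with a corresponding ball in $G$.

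For $(ii)\Rightarrow(iii)$ I would descend the cover to $\bar U_i\subseteq G/G_n$ and build, on the compact space $G/G_n$, the standard partition of unity $\bar f_i=\bar\varphi_i\big/\sum_j\bar\varphi_j$ with $\bar\varphi_i(\xi)=d_n\big(\xi,(G/G_n)\setminus\bar U_i\big)$, the denominator being bounded below by the Lebesgue number. I would then lift each $\bar f_i$ to $G$ and multiply by a fixed $G_n$-periodic bump concentrated near a relatively compact transversal for the right $G_n$-action, normalised so that its integral over each coset is $1$; the resulting $f_i\colon G\to[0,1]$ have compact support $K_i$ injecting into $G/G_n$ (which is the condition $K_i\cap K_ig=\varnothing$ for $g\in G_n\setminus\{1\}$) and satisfy $\sum_i\int_{G_n}\ell_gf_i\,d\lambda=\sum_i\bar f_i(gG_n)=1$. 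The inequality $\|f_i-\ell_gf_i\|_\infty\le\varepsilon$ on a prescribed $E\Subset G$ then follows from the Lipschitz bound on the $\bar f_i$, whose modulus of variation is of order $1/R$, once $R$ is chosen large relative to $E$ and $\varepsilon$. For $(iii)\Rightarrow(i)$ I would reverse the construction: the coset averages $gG_n\mapsto\int_{G_n}\ell_gf_i\,d\lambda$ form a uniformly bounded, $\varepsilon$-slowly varying partition of unity of $G/G_n$ into $d+1$ pieces, which is exactly the partition-of-unity witness for ${\rm asdim}(G/G_n)\le d$ uniformly in $n$, hence ${\rm asdim}(\Box_\sigma G)\le d$; as $E$ exhausts $G$ the free-support condition forces the quotient maps to be eventually injective on bounded sets, i.e.\ $\sigma$ is regular.

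The step I expect to be the main obstacle is the passage $(ii)\Rightarrow(iii)$, and specifically the bookkeeping that keeps the $f_i$ simultaneously (a) continuous with \emph{compact} support, (b) genuinely adapted to the $G_n$-action so that their coset integrals reconstruct the partition of unity downstairs, and (c) correctly normalised against the left Haar measure even though the quotient metric $d_n$ need not be $G$-invariant. In the discrete case all three demands come essentially for free; here one must select a relatively compact fundamental region for the right $G_n$-action, verify that the periodic bump can be chosen with controlled width so that the Haar-measure normalisation is compatible with the left-translation estimates, and use the properness and right-invariance of $d$ to keep the supports $K_i$ relatively compact. Once this coherence is arranged, the remaining inequalities are the routine Lipschitz and covering estimates familiar from the discrete Basic Lemma.
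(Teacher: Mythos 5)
The paper supplies no argument for this lemma beyond the sentence that it ``is proved as in the discrete case \cite[Lemma 3.13]{swz}'', so your plan --- transcribe the discrete proof with cocompactness in place of finite index, Haar measure in place of counting measure, and regularity in place of normality --- is exactly the paper's plan, and your $(i)\Rightarrow(ii)$ and $(iii)\Rightarrow(i)$ sketches (the $1$-Lipschitz quotient maps pull back $R$-disjointness and Lebesgue numbers for free; regularity, via injectivity of $q_n$ on $3S$-balls, makes $q_n$ isometric on $S$-balls so that covers and partitions of unity can be pushed down) are the right translations and agree with how the paper uses these facts elsewhere (Proposition \ref{asdim}, Lemma \ref{rc3}).

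However, the step you yourself single out as the main obstacle, $(ii)\Rightarrow(iii)$, is not closed by the construction you describe, and the failure is concrete. You set $f_i=(\bar f_i\circ q_n)\cdot\beta$ with $\beta$ a continuous, compactly supported Bruhat-type bump normalised by $\int_{G_n}\beta(gh)\,d\lambda(h)=1$. First, you claim $K_i=\mathrm{supp}(f_i)$ injects into $G/G_n$; but a nonzero continuous $f_i$ has support with nonempty interior, and when $G_n$ is a non-discrete cocompact closed subgroup (the typical case here: $\mathbb R\le\mathbb R^2$, or the upper-triangular subgroup of $SL(2,\mathbb R)$) every nonempty open set meets some coset $xG_n$ in more than one point, so $K_i\cap K_i\cdot g\ne\varnothing$ for arbitrarily small $g\in G_n\setminus\{1\}$, contradicting the support condition even in your weakened reading. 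Second, multiplying by the fixed bump $\beta$ destroys almost-invariance: $\|\beta-\ell_g\beta\|_\infty$ is of order $\|\beta\|_\infty$ (which is large when the transversal is thin) for any $g$ bounded away from $1$, so $\|f_i-\ell_gf_i\|_\infty\le\varepsilon$ cannot follow from the Lipschitz bound on $\bar f_i$ alone. To satisfy $(iii)$ one must either abandon continuity and support the $f_i$ in a relatively compact Borel transversal --- which then breaks the sup-norm estimate for a different reason --- or reformulate the almost-invariance and disjointness requirements at the level of the coset averages $g\mapsto\int_{G_n}\ell_gf_i\,d\lambda$ rather than of the $f_i$ themselves. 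Until that reconciliation is made explicit, $(ii)\Rightarrow(iii)$ is a genuine gap (one that the paper's bare citation of the discrete case does not address either, since these difficulties are invisible when every $G_n$ has finite index).
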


\vspace{.3cm}
\noindent The family $\{f_i\}^d_{i=0}$ is called a {\it system of decay} for $\sigma$. 
Next, we need the following well known version of the second isomorphism theorem for topological groups \cite[Theorem 5.33]{hr}.

\vspace{.3cm}
\begin{lemma}\label{sec}
Let $G$ be a topological group and $H,K$ be closed subgroups of $G$ with $H$ normal and $K$ locally compact and $\sigma$-compact such that $KH$ is locally 
compact. Then $KH/H$ and $K/(K\cap H)$ are canonically isomorphic as  
topological groups. 
\end{lemma}

\vspace{.3cm}
It is known that intersection of cocompact subgroups may fail to be cocompact  (for instance, for an irrational real number $\alpha$, the closed cocompact subgroups $\mathbb Z$ and $\mathbb Z\alpha$ of $\mathbb R$ intersect trivially). On the positive side, we have the following result when one of the subgroups is open.  

\vspace{.3cm} 
\begin{lemma}\label{intersection}
Let $G$ be a locally compact $\sigma$-compact group, $H$ be a normal closed subgroup of $G$ and $K$ be a closed cocompact subgroup of $G$, then 

$(i)$ if $K$ or $H$ is open, then $K\cap H$ is a closed cocompact subgroup of $H$,

$(ii)$ if moreover $H$ is cocompact,  then $K\cap H$ is a closed cocompact subgroup of $G$.
\end{lemma}
\begin{proof}
$(i)$ Since $K$ is closed, it is locally compact and $\sigma$-compact. If $K$ or  $H$ is open, $KH$ is an open subgroup, and so closed and locally compact. Now $KH/K$ is a closed subgroup of the compact group $G/K$, thus it is compact. It follows from Lemma \ref{sec} that $H/(K\cap H)$ is compact. 

$(ii)$ When $H$ is moreover cocompact we may repeat the above argument to see that $KH/H$ is compact, and so is $K/(K\cap H)$, again by Lemma \ref{sec}. Choose a compact subset $J\subseteq K$ with $K=J(K\cap H)$. Since $K$ is cocompact, we may also choose a compact subset $L\subseteq G$ such that $G=LK$. It follows that $G=(JL)(K\cap H)$, thus $K\cap H$ is cocompact, as $JL$ is compact. 
\end{proof}

A slightly more general version of the above lemma holds: if $H=\bigcap_{n\geq 1} H_n$ is a countable intersection of open normal subgroups and $K$ is a closed cocompact subgroup, then $K\cap H$ is a closed cocompact subgroup of $H$: we need only to check that $KH$ is a closed subgroup, but $KH=\bigcap_{n\geq 1} KH_n$ is a countable intersection of open (and so closed) subgroups, and so is closed. In this case, if $H$ is moreover cocompact, then $K\cap H$ is a closed cocompact subgroup of $G$.

\vspace{.3cm}  
\begin{lemma}\label{subg}
Let $G$ be locally compact and $\sigma$-compact. Let $H\leq G$ be a(n) closed (resp., open) cocompact subgroup. For each regular $RF$-app. (resp., $RC$-app.) $(G_n)$ of $G$,  $(H\cap G_n)$ is a regular $RC$-app. of $H$. If $H\leq G$ is  an open cocompact subgroup and $(G_n)$  is a regular $RF$-app. of $G$, then $(H\cap G_n)$ is a regular $RF$-app. of $H$.
\end{lemma}
\begin{proof}
By assumption, either $H$ or each $G_n$ is open, or we have both. The result follows from Lemma \ref{intersection} in any case.
\end{proof}

\vspace{.3cm}
\begin{proposition}\label{asdim}
Let $G$ be locally compact and $\sigma$-compact. Given a regular $RF$-app. $\sigma=(G_n)$ of $G$,

$(i)$ ${\rm asdim}(G)\leq{\rm asdim}(\Box_\sigma G)$;

$(ii)$ for any closed cocompact subgroup $H\leq G$ and regular $RF$-app.  $\tau=(H\cap G_n)$ of $H$, ${\rm asdim}(\Box_\tau H)\leq{\rm asdim}(\Box_\sigma G)$.

$(iii)$ for any closed cocompact subgroup $H\leq G$,   ${\rm asdim}(\Box_s H)={\rm asdim}(\Box_s G)$.

\end{proposition}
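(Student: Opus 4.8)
The plan is to reduce all three parts to the covering characterization of asymptotic dimension supplied by the Basic Lemma (Lemma \ref{basic}), using as the central tool the observation that regularity turns each quotient map into a \emph{local isometry}. Writing $\pi_n\colon G\to G/G_n$ and taking $E$ to be the closed ball $\bar B_r(1)$ in Definition \ref{rrc}(iii) (which, by right invariance of $d$, is exactly the translate $\bar B_r(g)=\bar B_r(1)g$), regularity yields for each $r>0$ an index $n$ for which $\pi_n$ is injective on every ball of radius $2r$; a one-line triangle-inequality argument then shows that the infimum defining $d_n$ is attained at the identity on $r$-balls, so $\pi_n$ restricts to an isometry from each $r$-ball of $G$ onto its image in $(G/G_n,d_n)$. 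For part $(i)$ I may assume $d:={\rm asdim}(\Box_\sigma G)<\infty$. Fixing $R$, applying the definition of asymptotic dimension to $\Box_\sigma G$ itself gives a single $(d{+}1)$-coloured cover whose colours are $R$-disjoint and whose members have diameter at most some $S=S(R)$; its restriction to one component is such a cover of $(G/G_n,d_n)$. I then choose $n$ large enough that $\pi_n$ is a local isometry on scale $2S+R$, and lift every member $V$ through each of its sheets (the local inverses of $\pi_n$). Each sheet is an isometric copy of $V$, so the lifted family is uniformly bounded by $S$ and covers $G=\pi_n^{-1}(G/G_n)$; colours stay $R$-disjoint because $\pi_n$ is $1$-Lipschitz (for lifts of distinct members) and because distinct sheets over one member are separated by more than the injectivity radius (points projecting equally are $>2S+R$ apart, and each sheet has diameter $\le S$). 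Hence ${\rm asdim}(G)\le d$.

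For part $(ii)$, Lemma \ref{subg} already gives that $\tau=(H\cap G_n)$ is a regular $RC$-app of $H$, so it remains to bound ${\rm asdim}(\Box_\tau H)$, and here I use the cover form of the Basic Lemma rather than the system of decay, so as to avoid comparing Haar measures. Assuming ${\rm asdim}(\Box_\sigma G)\le d$, condition $(ii)$ of Lemma \ref{basic} furnishes, for each $R$, an index $n$ and a uniformly bounded cover $\{U_0,\dots,U_d\}$ of $G$ by mutually disjoint $G_n$-invariant sets with Lebesgue number $\ge R$. Intersecting with $H$, the family $\{U_i\cap H\}$ covers $H$, is mutually disjoint, and each $U_i\cap H$ is $(H\cap G_n)$-invariant because $H$ is a subgroup; its image lies inside the image of $U_i$ in $G/G_n$, so uniform boundedness is inherited, and any $R$-ball of $H$ sits inside an $R$-ball of $G$, hence inside some $U_i$, so the Lebesgue number survives. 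By the Basic Lemma applied to $H$ and $\tau$, this yields ${\rm asdim}(\Box_\tau H)\le d$.

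For part $(iii)$ I combine $(ii)$ with the minimality of the standard box space recorded before the statement, namely ${\rm asdim}(\Box_s K)\le{\rm asdim}(\Box_\rho K)$ for every $RC$-app $\rho$ of $K$, together with the cocompactness $G=KH$ with $K$ compact. The inequality ${\rm asdim}(\Box_s H)\le{\rm asdim}(\Box_s G)$ comes by taking $\sigma=(G_n)$ dominating for $G$ and chaining ${\rm asdim}(\Box_s H)\le{\rm asdim}(\Box_\tau H)\le{\rm asdim}(\Box_s G)$ via $(ii)$. For the reverse I take a dominating regular $RC$-app $(H_n)$ of $H$; since $H_n$ is cocompact in $H$ and $H$ is cocompact in $G$, $H_n$ is cocompact in $G$, so $(H_n)$ is an $RC$-app of $G$. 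The key is that for the \emph{same} subgroup $H_n$ the inclusion $H/H_n\hookrightarrow G/H_n$ is an isometric embedding (both quotient metrics infimize over the identical set $H_n$) and is coarsely dense with constant $C=\mathrm{diam}(K\cup\{1\})$ independent of $n$: any coset $gH_n$ with $g=kh$ satisfies $d_n(gH_n,hH_n)\le d(kh,h)=d(k,1)\le C$. Thus $G/H_n$ and $H/H_n$ are uniformly coarsely equivalent, giving ${\rm asdim}(\Box_{(H_n)}G)={\rm asdim}(\Box_{(H_n)}H)={\rm asdim}(\Box_s H)$, and minimality for $G$ yields ${\rm asdim}(\Box_s G)\le{\rm asdim}(\Box_{(H_n)}G)={\rm asdim}(\Box_s H)$.

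The steps I expect to be most delicate both concern controlling distortion under the quotient maps. In $(i)$ it is the verification that distinct sheets over one cover set do not collide after lifting, which is exactly where the injectivity radius growing with $n$ (from regularity) is indispensable. In the reverse half of $(iii)$ it is the claim that the dominating approximation $(H_n)$ of $H$ remains \emph{regular} as an approximation of $G$: a collision $x^{-1}y\in H_n\setminus\{1\}$ is forced into $H$ because $H_n\le H$, and is then excluded for large $n$ by regularity of $(H_n)$ in $H$ together with properness of the metric. I would isolate the local-isometry estimate and this regularity-transfer as short lemmas before assembling the three inequalities.
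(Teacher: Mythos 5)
Your parts $(i)$ and $(ii)$ are correct and land where the paper does: the paper disposes of both by citing Lemma \ref{basic} (whose condition $(ii)$ already hands you a uniformly bounded $(d{+}1)$-coloured cover of $G$ itself, so no lifting is needed for $(i)$) together with Lemma \ref{subg}; your sheet-lifting argument for $(i)$ is in effect a re-proof of the implication from the box-space bound to condition $(ii)$ of the Basic Lemma, and your intersection argument for $(ii)$ is exactly the intended one. Where you genuinely diverge is the hard inequality ${\rm asdim}(\Box_s H)\geq{\rm asdim}(\Box_s G)$ in $(iii)$. The paper keeps the \emph{same} approximation $\sigma=(G_n)$ throughout: it picks a compact transversal $F_H\subseteq B_R(e)$ for $G/H$, takes $n$ so large that $G_n\subseteq H$, starts from a $(H\cap G_n)$-invariant cover of $H$ with Lebesgue number $8R$, shrinks by $4R$ and fattens by $B_{2R}(e)$ to manufacture a disjoint $G_n$-invariant cover of $G$ with Lebesgue number $R$, and concludes by the Basic Lemma. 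You instead push a dominating approximation $(H_n)$ of $H$ up to $G$ and compare $\Box_{(H_n)}G$ with $\Box_{(H_n)}H$ via a uniform coarse equivalence, then invoke minimality of the standard box space. Your route is cleaner conceptually and avoids the explicit $R$-bookkeeping; the paper's route stays entirely inside the covering characterization and never has to change the approximating sequence or appeal to the (unproved in the paper) minimality remark.

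Two soft spots in your $(iii)$. First, the coarse-density constant $d(kh,h)\le d(k,1)$ requires the \emph{right}-invariant metric of the Basic Lemma, not the left-invariant plig metric used to define $\Box_\sigma G$; with a left-invariant $d$ one gets $d(kh,h)=\ell(h^{-1}kh)$, which is unbounded over $h\in H$, so you must fix conventions at the outset. Second, your regularity-transfer lemma is not established by the argument you sketch: regularity of $(H_n)$ as an approximation of $G$ requires, in the form of Definition \ref{rrc}$(i)$, that $H_n\cap g^GH_n=\emptyset$ where $g^G$ is the full $G$-conjugacy class, whereas regularity of $(H_n)$ in $H$ only controls $H$-conjugates; writing $g=kh$ one is led to elements of $(K^{-1}E^{-1}EK)\cap H$, a \emph{compact} rather than finite set, and excluding its $H$-conjugates from $H_n$ for a single $n$ needs a uniformity argument that Definition \ref{rrc} does not supply (the sets $\bigcup_{h}H_n^{h}$ need not be closed). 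This step is dispensable only if you read the paper's minimality remark as ranging over all $RC$-approximations rather than only regular ones; if you need regularity, the gap is real and you should either prove the uniform exclusion or switch to the paper's transversal construction, which sidesteps the issue entirely.
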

\begin{proof}
Parts $(i)$, $(ii)$ and the easy inequality in part $(iii)$  follow from Lemmas \ref{basic} and \ref{subg}. We just need to verify that ${\rm asdim}(\Box_s H)\geq{\rm asdim}(\Box_s G)$. Let $d_H$ be a plig metric on $H$. Take a compact set $F_H$ of representatives of $G/H$ such that $F_H\times H\to G; \ (x,h)\mapsto x\cdot h$ is bijective (use  properness of the quotient map and take $F_H:=q^{-1}(G/H)$). Choose $R>0$ large enough that $F_H\subseteq B_R(e)$. Use  Lemma \ref{basic} to find $n\geq 1$ and collections $\mathcal W^{(0)},\cdots, \mathcal W^{(s)}$, with $s={\rm asdim}(\Box_s H)$, forming a uniformly bounded cover of $H$ with Lebesgue number at most $8R$ such that each $\mathcal W^{(j)}$	consists of $(H\cap G_n)$-right translation invariant disjoint sets, for $0\leq j\leq s$. Choose $n$ large enough that $G_n\subseteq H$. For $0\leq j\leq s$, put $\mathcal V^{(j)}=\{B_{-4R}(W): \ W\in \mathcal W^{(j)}\}$, where $B_{-4R}(W):=\{h\in H: \  B_{4R}(h)\cap H\subseteq W\}$, and observe that $\mathcal V^{(j)}$ consists of $G_n$-right translation invariant sets with minimal distance $4R$. By the above bound on the Lebesgue number, any $4R$-ball in $H$ is contained in a member of $\mathcal W^{(j)}$'s, and so $\mathcal V^{(j)}$'s form a cover of $H$. Now $\mathcal U^{(j)}:=\{B_{2R}(e)V: \ V\in \mathcal V^{(j)}\}$ consists of disjoint $G_n$-right translation invariant sets, and the Lebesgue number of the cover $\mathcal U^{(0)}\cup\cdots\cup \mathcal U^{(s)}$ is at most $R$ (for each $g\in G$ there is $x\in F_H$ and $h\in H$ such that $g=xh$ and there is $V\in \mathcal V^{(0)}\cup\cdots\cup \mathcal V^{(s)}$ with $h\in V$. Since $F_H\subseteq B_R(e)$, $g\in B_R(e)V$ and so $B_R(g)=B_R(e)g\subseteq B_{2R}(e)V\in \mathcal U^{(0)}\cup\cdots\cup \mathcal U^{(s)}$, as claimed). This finishes the proof of the reverse inequality. 
\end{proof}

\section{Topologically virtually nilpotent and polycyclic-by-compact groups} \label{tvn}
Recall that a group $G$ is ($r$-step) {\it nilpotent} if there is a finite series
$$1=G_0\unlhd G_1\unlhd\cdots\unlhd G_r=G$$
of normal subgroups with $[G,G_i]\leq G_{i-1}$, for $i=1,\cdots,r$. We say that $G$ is {\it topologically virtually nilpotent} (TVN) if it has a nilpotent cocompact subgroup. Hirsch proved that every finitely generated (discrete) nilpotent group is {\it polycyclic} (i.e., has a finite series with cyclic factors) \cite{h}.  He also showed that polycyclic-by-finite groups are residually finite (RF). We use the following series of lemmas which extend a classical result of Malcev  on polycyclic-by-finite groups \cite{m}, to show the analog of Hirsch result for {\it polycyclic-by-compact} groups, i.e., groups with a polycyclic  cocompact  subgroup. 

For this purpose, we need the notion of Hirsch length for topological groups.
Let $G$ be a topological group and define the {\it Hirsch length} $h(G)$ of $G$ as the maximum number $n$ (possibly infinity) of infinite cyclic factors in all possible normal series
$$1=G_0\unlhd G_1\unlhd\cdots\unlhd G_m=G$$
of closed subgroups of $G$. This is clearly well-defined. When $G$ is discrete, this is the same as  the number of infinite cyclic factors in any series 
$$1=G_0\unlhd G_1\unlhd\cdots\unlhd G_n=G$$
with cyclic or  finite factors, by  the Jordan-H\"{o}lder-Schreier Theorem \cite{b}. Unfortunately this  could not be adapted to the topological case (with finite replaced by compact), as the second isomorphism theorem is not valid for topological groups (except under some extra conditions; c.f., \cite[Theorem 5.3]{hr}). The argument in the algebraic case has two parts: each refinement of a series as above has the same number of infinite cyclic factors as the original series, and that any two such series have isomorphic refinements by the Jordan-H\"{o}lder-Schreier Theorem. It is indeed the second part which fails in the topological case, as the next lemma shows.   

\vspace{.3cm}  
\begin{lemma}\label{hl}
The number of infinite cyclic factors in any series 
$$1=G_0\unlhd G_1\unlhd\cdots\unlhd G_n=G$$
with cyclic or  compact factors remains unchanged after refinement with closed subgroups.
\end{lemma}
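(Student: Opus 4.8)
The plan is to reduce the statement to a single segment of the series and then to a direct classification inside a quotient. A refinement of $1=G_0\unlhd G_1\unlhd\cdots\unlhd G_n=G$ keeps all the original terms and acts independently on each gap $[G_{i-1},G_i]$, inserting closed subgroups $G_{i-1}=H_0\unlhd H_1\unlhd\cdots\unlhd H_k=G_i$. Since the number of infinite cyclic factors is additive over the whole series, it suffices to show that the number of infinite cyclic factors among the $H_j/H_{j-1}$ equals the number, namely $0$ or $1$, contributed by the single original factor $G_i/G_{i-1}$.

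First I would pass to the quotient $Q_i:=G_i/G_{i-1}$. Because $G_{i-1}\trianglelefteq G_i$ and each $H_j$ lies between $G_{i-1}$ and $G_i$, we have $G_{i-1}\trianglelefteq H_j$, so $\bar H_j:=H_j/G_{i-1}$ is a closed subgroup of $Q_i$: its preimage under the (open, continuous) quotient map $G_i\to Q_i$ is $H_jG_{i-1}=H_j$, which is closed. Moreover $\bar H_{j-1}\trianglelefteq\bar H_j$, and the canonical map $H_j/G_{i-1}\to H_j/H_{j-1}$ is a continuous open surjection with kernel $\bar H_{j-1}$, so the \emph{third} isomorphism theorem — which, unlike the second, does hold for topological groups — gives $\bar H_j/\bar H_{j-1}\cong H_j/H_{j-1}$ as topological groups. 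Thus the refined gap is recorded exactly by a closed normal series $1=\bar H_0\unlhd\cdots\unlhd\bar H_k=Q_i$ with the same factors, and the problem becomes: count the infinite cyclic factors of an arbitrary closed normal series of $Q_i$.

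A three-case analysis on $Q_i$ then finishes the argument. If $Q_i$ is compact, every $\bar H_j$ is a closed subgroup of a compact group, hence compact, and every factor $\bar H_j/\bar H_{j-1}$ is a continuous image of a compact group, hence a compact Hausdorff group; as no compact Hausdorff group is infinite cyclic ($\mathbb{Z}$ carries no compact Hausdorff topology, there being no countably infinite compact Hausdorff group), the count is $0=0$. If $Q_i$ is finite cyclic it is a finite group, all factors are finite, and again the count is $0=0$. The only substantial case is $Q_i\cong\mathbb{Z}$, whose subgroups are $0$ and $n\mathbb{Z}$ for $n\geq 1$; in any series $0=\bar H_0\subsetneq\cdots\subsetneq\bar H_k=\mathbb{Z}$ the first nontrivial term $\bar H_1=n_1\mathbb{Z}$ yields the infinite cyclic factor $\bar H_1\cong\mathbb{Z}$, while each later factor $\bar H_j/\bar H_{j-1}\cong\mathbb{Z}/(n_{j-1}/n_j)\mathbb{Z}$ is finite; so there is exactly one infinite cyclic factor, matching the original count $1$. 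Summing over $i$ gives the lemma.

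I expect the main obstacle to be the topological bookkeeping in the reduction rather than the arithmetic at the end: one must verify that the inserted subgroups descend to genuine closed subgroups of $Q_i$ and that the factors survive as topological groups. This is precisely the place where the \emph{third} isomorphism theorem must be invoked in place of the \emph{second}, whose failure is (as noted before the lemma) exactly why the classical Jordan--H\"older--Schreier reasoning does not transfer; once this reduction is secured the three cases are routine, the only genuine input being the classification of subgroups of $\mathbb{Z}$.
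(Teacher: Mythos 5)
Your proof is correct and takes essentially the same route as the paper's: a gap-by-gap case analysis in which a compact (or finite cyclic) factor can only produce compact subquotients, while an infinite cyclic factor contributes exactly one infinite cyclic piece because every proper quotient of $\mathbb{Z}$ is finite and every nontrivial subgroup of $\mathbb{Z}$ is again infinite cyclic. Your explicit reduction to $Q_i=G_i/G_{i-1}$ via the third isomorphism theorem just makes the topological bookkeeping (which the paper leaves implicit) more transparent.
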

\begin{proof}
If we insert normal closed subgroups in the series as
$$G_{i+1}\unlhd A\unlhd B\unlhd G_i$$
to get a proper refinement, then either $G_{i+1}$ is cocompact in $G_i$, in which case  $A/G_{i+1}$ is  compact as a closed subgroup of a compact group, and so are $G_i/B$ and $B/A$, as a quotient or a closed subquotient of $G_i/G_{i+1}$, and nothing is added to the number of non-compact cyclic factors; or $G_{i+1}$ is not cocompact in $G_i$, in which case $G_i/G_{i+1}$ is an infinite cyclic group and has no proper infinite quotient, thus as we have started with a proper refinement, $A$ has to be of finite index in $G_i$ and so is $B$. This forces $G_{i+1}$ to be of infinite index in $A$ and again the number of non-compact cyclic factors is not effected.  This shows that each refinement of the above series has the same number of non-compact cyclic factors. 
\end{proof}

Let us see what goes wrong with  refinements: take another series
$$1=H_0\unlhd H_1\unlhd\cdots\unlhd H_n=G,$$
and observe that, for each $i$,
$$1=G_i\cap H_0\unlhd G_i\cap H_1\unlhd\cdots\unlhd G_i\cap H_n=G_i$$
is an ascending series of closed subgroups. This gives subnormal series
$$G_{i+1}=(G_i\cap H_0)G_{i+1}\unlhd (G_i\cap H_1)G_{i+1}\unlhd\cdots\unlhd (G_i\cap H_n)G_{i+1}=G_i,$$
and repeating this for $i=0,\cdots, m$, we get a refinement of the first series with $mn$ terms. Similarly, we could get a refinement of the second series with $nm$ terms by inserting subnormal series
$$H_{j+1}=(H_j\cap G_0)H_{j+1}\unlhd (H_j\cap G_1)H_{j+1}\unlhd\cdots\unlhd (H_j\cap G_m)H_{j+1}=H_j,$$
for  $j=0,\cdots, n$. These refinements are algebraically isomorphic, since for  subgroups $Q, N,$ and $L$ of $G$ with $L$ normal subgroup of $Q$ satisfying $qN=Nq$, for every $q\in Q$, we have $QN/LN\simeq Q/L(Q\cap N)$ \cite[Lemma 1]{b}). This applies in the first refinement to $Q:=G_i\cap H_j, N := G_{i+l},$ and  $L:=G_i\cap H_{j+1}$, and to the second refinement with interchanging the roles of the
$G_i$'s  and $H_j$'s. However, since one could not guarantee that $QN/LN$ is locally compact, there is no way to show that it is also homeomorphic to $Q/L(Q\cap N)$ (which is clearly locally compact).  

On the other hand, there are situations where Hirsch length is well-behaved (i.e., it satisfies the Hirsch formula). This is the content of  next lemma.  

\vspace{.3cm}  
\begin{lemma}\label{hf}
	If $G$ is a topological group and  $H\unlhd G$ is a closed normal subgroup, then 
	$$h(G)\geq h(H)+h(G/H).$$
	If moreover, $H$ is compact, cocompact, or open, then the equality holds.
\end{lemma}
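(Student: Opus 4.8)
The plan is to prove the inequality by concatenating optimal series, and to obtain equality, in each of the three special cases, by establishing the reverse bound $h(G)\le h(H)+h(G/H)$. Throughout, $q\colon G\to G/H$ denotes the quotient map.

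For the inequality, choose a normal series of closed subgroups $1=H_0\unlhd\cdots\unlhd H_r=H$ with exactly $h(H)$ infinite cyclic factors, and a normal series $1=\bar K_0\unlhd\cdots\unlhd \bar K_s=G/H$ of closed subgroups of $G/H$ with exactly $h(G/H)$ infinite cyclic factors. Put $K_j:=q^{-1}(\bar K_j)$, so that $H=K_0\unlhd\cdots\unlhd K_s=G$ is a series of closed subgroups. Since we are quotienting by the single closed normal subgroup $H$, the third isomorphism theorem is valid topologically, giving $K_{j+1}/K_j\cong \bar K_{j+1}/\bar K_j$. Concatenating the $H_i$ with the $K_j$ yields a normal series of closed subgroups of $G$ with exactly $h(H)+h(G/H)$ infinite cyclic factors, so $h(G)\ge h(H)+h(G/H)$.

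For equality it remains to bound an arbitrary normal series $1=G_0\unlhd\cdots\unlhd G_m=G$ of closed subgroups, with, say, $N$ infinite cyclic factors, by $h(H)+h(G/H)$. The key tool is, for each $i$, the algebraically exact sequence $1\to (G_{i+1}\cap H)/(G_i\cap H)\to G_{i+1}/G_i\to q(G_{i+1})/q(G_i)\to 1$ arising from the Dedekind identity $G_{i+1}\cap G_iH=G_i(G_{i+1}\cap H)$. When $H$ is compact or open, $G_iH$ is closed (closed times compact, respectively open and hence closed), so the images $q(G_i)$ form a series of closed subgroups of $G/H$ with at most $h(G/H)$ infinite cyclic factors, while the intersections $G_i\cap H$ form a series in $H$ with at most $h(H)$ such factors. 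If $G_{i+1}/G_i\cong\mathbb Z$, then the kernel $(G_{i+1}\cap H)/(G_i\cap H)$ is a compact, respectively open, subgroup of $\mathbb Z$, hence trivial, respectively of finite index; in either case exactly one of the two outer terms is infinite cyclic. This assigns each infinite cyclic factor of the $G$-series injectively to an infinite cyclic factor of the $H$-series or of the $G/H$-series, whence $N\le h(H)+h(G/H)$ and equality follows. (For compact $H$ one has $h(H)=0$ and the assignment lands in the $G/H$-series; for open $H$ the quotient $G/H$ is discrete, which is exactly what makes the two outer terms behave like honest subgroups and quotients of $\mathbb Z$.)

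The cocompact case is the genuine obstacle. Here $G/H$ is a compact group, so $h(G/H)=0$ and the task is to show $h(G)\le h(H)$. The argument just given breaks down, because $G_iH$ need not be closed and — as already the inclusion $\mathbb Z\le\mathbb R$ shows — an infinite cyclic factor $G_{i+1}/G_i$ may meet $H$ trivially, so the per-index assignment fails. My plan is instead to argue globally: replace the images by the closures $B_i:=\overline{q(G_i)}$, which form a normal series of closed subgroups of the compact group $G/H$ and so contribute no infinite cyclic factor, and then establish by a compensation count that every infinite cyclic factor of the $G$-series which becomes dense in an infinite (necessarily compact, monothetic) factor $B_{i+1}/B_i$ is matched, at some other index, by an infinite cyclic factor of the intersection series $(G_i\cap H)$. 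Making this matching precise — equivalently, proving that the total number of infinite cyclic factors is invariant under the passage from $G$ to the pair $(H,G/H)$ even when the factors cannot be aligned index by index — is the \emph{crux}. I expect to reduce it, via Lemma \ref{hl} and passage to the abelianizations of successive factors, to a rank computation for locally compact abelian groups, where Pontryagin duality permits trading the dense monothetic factors of $G/H$ for cyclic factors of $H$.
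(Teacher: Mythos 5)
Your treatment of the inequality and of the compact and open cases is correct and essentially coincides with the paper's: the paper likewise pulls back a series of $G/H$ through the open quotient map and concatenates to get $h(G)\geq h(H)+h(G/H)$, and for $H$ compact or open it runs the same bookkeeping through the epimorphism $G_{i+1}/G_i\to G_{i+1}H/G_iH$ whose kernel is the image of $G_{i+1}\cap H$ (your exact sequence), using that $G_iH$ is closed in these two cases. (One minor slip: for $H$ open the kernel $k\mathbb Z\leq\mathbb Z$ can be trivial rather than of finite index, but your conclusion that exactly one of the two outer terms is infinite cyclic stands in either event.)

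The genuine gap is the cocompact case, which you explicitly leave as a plan (``I expect to reduce it\dots'') rather than a proof; since this is precisely the case invoked later by Corollary \ref{pc}, Lemma \ref{sgp} and Proposition \ref{va}, the lemma is not established by your argument. Your diagnosis of the obstacle is, however, exactly right, and it is worth recording that the paper's own proof does not circumvent it: the paper argues index by index via the monomorphism $(G_{i+1}\cap H)/(G_i\cap H)\to G_{i+1}/G_i$ and rules out a trivial image by asserting that $G_{i+1}\cap H$ is cocompact in $G_{i+1}$ because $H$ is cocompact in $G$. Intersecting a cocompact subgroup with a closed subgroup need not yield a cocompact subgroup of the latter: your own example $G=\mathbb R$, $H=\mathbb Z$, $G_1=\alpha\mathbb Z$ with $\alpha$ irrational gives $G_1\cap H=1$, which is not cocompact in $G_1\cong\mathbb Z$, and the series $1\unlhd\alpha\mathbb Z\unlhd\mathbb R$ has an infinite cyclic factor meeting $H$ trivially while the intersection series acquires its infinite cyclic factor at a different index. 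So a per-index matching is impossible in general, and some global compensation argument (the closure-and-rank scheme you sketch, or a reduction to locally compact abelian groups) really is required; until it is supplied, the cocompact equality remains unproven in your write-up, and the difficulty you isolate is a genuine one for the paper as well.
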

\begin{proof}
	Let us first assume that $h(H)=n$ and $h(G/H)=m$ are both finite. Then there are normal series
	$$1=H_0\unlhd H_1\unlhd\cdots\unlhd H_k=H$$
	of closed subgroups of $H$ with $n$ infinite cyclic factors and normal series
	$$H/H\unlhd G_1/H\unlhd\cdots\unlhd G_\ell/H=G/H$$
	of closed subgroups of $G/H$ with $m$ infinite cyclic factors. Since the quotient map: $G\to G/H$ is open, each $G_i$ is a closed subgroup of $G$ containing $H$. Now we get   the subnormal series 
	$$1=H_0\unlhd H_1\unlhd\cdots\unlhd H_{k-1}\unlhd H\unlhd G_1\unlhd\cdots\unlhd G_\ell=G$$
	with $n+m$ infinite cyclic factors, thus $h(G)\geq n+m$. 
	
	If one of the terms in  right hand side of the inequality is infinite, say that of $H$, then again, for each $n$, there is a  normal series
	of closed subgroups of $H$ with $n$ infinite cyclic factor, and by the above argument, $h(G)\geq n+m$, for each $n$, thus $h(G)=\infty$, and we again have the inequality.
	
	Next let us assume that $N$ is compact. Then given a subnormal series  $$1=G_0\unlhd G_1\unlhd\cdots\unlhd G_n=G$$
	we get a series 
	$$1\unlhd G_1N/N\unlhd\cdots\unlhd G_{n-1}N/N\unlhd G/N$$
	where each $G_iN$ is a closed subgroup of $G$ (since $G_i$ is closed and $N$ is compact) and so locally compact. It is known that in this case, the second isomorphism theorem holds, that's is, $G_iN/N\simeq G_i/(G_i\cap N)$, as topological groups \cite[Theorem 5.3]{hr}. Also, $G_{i+1}N/G_iN\simeq G_{i+1}/G_i(G_{i+1}\cap N)$. Now there is a group epimorphism: $G_{i+1}/G_i\to G_{i+1}/G_i(G_{i+1}\cap N)$ with kernel $G_{i+1}\cap N$. In particular, if $G_{i+1}/G_i$ is the infinite cyclic group, then $G_{i+1}\cap N$ has to be isomorphic to $k\mathbb Z$, for some integer $k$, but this intersection is a compact group, so $k=0$ and $G_{i+1}N/G_iN$ is the infinite cyclic group. This shows that $h(G/N)\geq h(G)$, and since $h(N)=0$,  equality (Hirsch formula) holds. 
	
	When $N$ is cocompact, we get a series  $$1\unlhd G_1\cap N\unlhd\cdots\unlhd G_{n-1}\cap N\unlhd N$$
	and there is a group monomorphism: $(G_{i+1}\cap N)/(G_i\cap N)\to G_{i+1}/G_i$, so if $G_{i+1}/G_i$ is the infinite cyclic group, $(G_{i+1}\cap N)/(G_i\cap N)$ is either isomorphic to the infinite cyclic group, or is trivial. But if it is trivial, then $G_{i+1}\cap N=G_i\cap N$. In this case, since $N$ is cocompact in $G$, $G_{i+1}\cap N$ is cocompact in $G_{i+1}$, thus in the series of closed subgroups $$G_{i}\cap N\leq G_{i}\leq G_{i+1}$$
	the first is cocompact in the last, and so is the second in the last, i.e., $G_{i+1}/G_i$ has to be compact, which contradicts our assumption. Thus $(G_{i+1}\cap N)/(G_i\cap N)$ is infinite cyclic, whenever  $G_{i+1}/G_i$ is so, that is, $h(N)\geq h(G)$, and since $h(G/N)=0$, again the equality holds.
	
	Finally, if $N$ is open, then $G/N$ is discrete, and given a subnormal series  $$1=G_0\unlhd G_1\unlhd\cdots\unlhd G_n=G$$
	in the series 
	$$1\unlhd G_1N/N\unlhd\cdots\unlhd G_{n-1}N/N\unlhd G/N$$
	all factors $G_iN/N$ is discrete and $G_iN/N\simeq G_i/(G_i\cap N)$. Also $G_iN$ is open and $G_{i+1}N/G_iN\simeq G_{i+1}/G_i(G_{i+1}\cap N)$, with both sides discrete. Again there is a group epimorphism: $G_{i+1}/G_i\to G_{i+1}/G_i(G_{i+1}\cap N)$ with kernel $G_{i+1}\cap N$. If $G_{i+1}/G_i$ is the infinite cyclic group, then $G_{i+1}\cap N$ has to be isomorphic to $k\mathbb Z$, for some integer $k$. But then $G_i\cap N$ also has to be of the form $\ell\mathbb Z$, for some integer $\ell$ (deviding $k$). Since there is a group monomorphism: $(G_{i+1}\cap N)/(G_i\cap N)\to G_{i+1}/G_i$, and $G_{i+1}/G_i$ has no non-trivial finite subgroup, both $k$ and $\ell$ cannot be non-zero. If $k=0$, then $\ell=0$ and we get no infinite factor in the subnormal series of $N$ at the $i^{\rm th}$ position, but get one such factor in the subnormal series of $G/N$ at the $i^{\rm th}$ position. If $k\neq 0$, then  $\ell=0$ and we get one infinite factor in the subnormal series of $N$ at the $i^{\rm th}$ position. This shows that  
	$$h(N)+h(G/N)\geq h(G),$$
	 finishing the proof.	 
\end{proof}

Now since the polycyclic groups have finite Hirsch length we immediately get the following result from the cocompact case of the above lemma. 

\vspace{.3cm}  
\begin{corollary}\label{pc}
	Polycyclic-by-compact groups have finite Hirsch length.
\end{corollary}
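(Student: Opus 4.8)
The plan is to read the corollary off directly from the cocompact case of the Hirsch formula (Lemma \ref{hf}). By definition a polycyclic-by-compact group $G$ carries a closed normal polycyclic subgroup $N\trianglelefteq G$ with $G/N$ compact; in particular $N$ is cocompact in $G$. So it suffices to establish the two numerical inputs $h(N)<\infty$ and $h(G/N)=0$ and then feed them into Lemma \ref{hf}.

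For the first input, recall that $N$, being polycyclic, is a discrete group admitting a finite series with cyclic factors. By the discrete characterisation of the Hirsch length recalled before Lemma \ref{hl} (which rests on the Jordan--H\"{o}lder--Schreier theorem \cite{b}), $h(N)$ equals the number of infinite cyclic factors in any such series, and this number is finite simply because the series itself is finite. For the second input, observe that $G/N$ is compact, so every closed subgroup of $G/N$ and every closed subquotient is compact; since an infinite cyclic group is never compact, every subnormal series of closed subgroups of $G/N$ has zero infinite cyclic factors, whence $h(G/N)=0$ (exactly the computation used for compact $N$ inside the proof of Lemma \ref{hf}).

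With these in hand the cocompact case of Lemma \ref{hf} yields
$$h(G)=h(N)+h(G/N)=h(N)<\infty,$$
which is the assertion.

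I expect the only genuine subtlety to lie in the hypothesis rather than in the computation, namely the normality of the cocompact polycyclic subgroup. The \emph{equality} in Lemma \ref{hf} is available precisely because the polycyclic-by-compact condition supplies a \emph{normal} cocompact subgroup $N$. Were one to start instead from a merely (possibly non-normal) polycyclic cocompact subgroup $H\leq G$, then Lemma \ref{hf} would not apply to $H$ directly, and one could not in general pass to a cocompact normal subgroup contained in $H$: as recorded in Section \ref{Box}, $SL(2,\mathbb R)$ possesses cocompact subgroups but no proper cocompact normal subgroup whatsoever. Under the normal reading standard for such extensions this obstruction does not arise, and the corollary is immediate.
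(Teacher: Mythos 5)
Your computation reproduces the paper's own proof, which consists of the single sentence preceding the corollary: it is declared an immediate consequence of the cocompact case of Lemma \ref{hf} together with the finiteness of the Hirsch length of polycyclic groups, and that is exactly what you spell out. The two numerical inputs $h(N)<\infty$ and $h(G/N)=0$ are verified correctly, and in fact only the inequality $h(N)\geq h(G)$ from the cocompact case is needed; that is precisely the direction whose proof in Lemma \ref{hf} uses normality of $N$ (to conclude that $G_{i+1}\cap N$ is cocompact in $G_{i+1}$), so your insistence on normality is well placed.

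The one point where you diverge from the paper is the definition, and it matters. The paper explicitly defines a polycyclic-by-compact group as one possessing a polycyclic \emph{cocompact} subgroup; neither normality nor closedness is assumed (and the remark after Lemma \ref{tfree} warns that the closure of a polycyclic subgroup need not be polycyclic). Under that definition your argument, read literally, does not apply, and your own $SL(2,\mathbb R)$ observation shows that one cannot in general shrink a cocompact subgroup to a normal cocompact one. So you have correctly isolated the weak point of the ``immediate'' deduction, but you close it by silently replacing the paper's class with the smaller class of groups admitting a closed normal polycyclic cocompact subgroup. Within the paper's own toolkit the intended repair is presumably Lemma \ref{polycycl} (every poly-$\{$cyclic, compact$\}$ group has a characteristic, hence normal, closed cocompact poly-$\{$infinite cyclic$\}$ subgroup) fed into Lemma \ref{hf}; but that lemma appears only after the corollary and is stated for poly-$\{$cyclic, compact$\}$ groups rather than for groups with an arbitrary polycyclic cocompact subgroup, so under the paper's literal definition a step is missing both from your proof and from the paper's. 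If the statement is read with the normality convention you adopt, your proof is complete and coincides with the paper's.
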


\vspace{.3cm}  
One could extend the notion of polycyclic groups by considering a family $\mathfrak X$ of locally compact groups and call $G$ a {\it poly-$\mathfrak X$ group} if $G$ has a finite subnormal series
$$1=G_0\unlhd G_1\unlhd\cdots\unlhd G_n=G$$
whose factors belong to $\mathfrak X$. The next lemma extends and is proved similar to \cite[10.2.4]{p}.

\vspace{.3cm}  
\begin{lemma}  \label{polyX}
If $\mathfrak X$ a family  of locally compact groups, stable under taking closed subgroups (resp. under taking quotients by closed normal subgroups), then so is the class of poly-$\mathfrak X$ groups.
\end{lemma}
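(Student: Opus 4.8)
The plan is to prove the two assertions in parallel: stability of the class of poly-$\mathfrak X$ groups under closed subgroups, and under quotients by closed normal subgroups. In each case I would argue by induction on the length $n$ of a fixed poly-$\mathfrak X$ series
$$1=G_0\unlhd G_1\unlhd\cdots\unlhd G_n=G,$$
the base case $n=1$ being immediate, since then $G\in\mathfrak X$ and a closed subgroup (resp.\ a quotient by a closed normal subgroup) of $G$ lies in $\mathfrak X$ by the assumed stability. Throughout I would use that the poly-operation is idempotent: if a subnormal series of closed subgroups has all of its factors poly-$\mathfrak X$, then splicing in the individual series exhibits the whole group as poly-$\mathfrak X$. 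This lets me relax the requirement ``each factor lies in $\mathfrak X$'' to ``each factor is poly-$\mathfrak X$'', which is all the induction needs to produce.

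For the subgroup case, let $H\leq G$ be closed and set $H_i:=H\cap G_i$. Each $H_i$ is closed in $H$, and $H_i\unlhd H_{i+1}$ because $G_i\unlhd G_{i+1}$: for $h\in H_{i+1}$ and $x\in H_i$ one has $hxh^{-1}\in H\cap G_i=H_i$. Thus
$$1=H_0\unlhd H_1\unlhd\cdots\unlhd H_n=H$$
is a subnormal series of closed subgroups. Composing the inclusion $H_{i+1}=H\cap G_{i+1}\hookrightarrow G_{i+1}$ with the quotient map $G_{i+1}\to G_{i+1}/G_i$ yields, since $H_{i+1}\cap G_i=H\cap G_i=H_i$, a continuous injective homomorphism $H_{i+1}/H_i\to G_{i+1}/G_i$ whose image is the algebraic subgroup $(H\cap G_{i+1})G_i/G_i$ of the $\mathfrak X$-group $G_{i+1}/G_i$.

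For the quotient case, let $N\unlhd G$ be closed normal and $\pi\colon G\to G/N$ the quotient map. Taking the images $\pi(G_i)=G_iN/N$, which form a subnormal chain
$$1=\pi(G_0)\unlhd\pi(G_1)\unlhd\cdots\unlhd\pi(G_n)=G/N,$$
the $i$-th factor is algebraically $\pi(G_{i+1})/\pi(G_i)\cong G_{i+1}N/G_iN\cong G_{i+1}/G_i(G_{i+1}\cap N)$ by the modular law, that is, a quotient of the $\mathfrak X$-group $G_{i+1}/G_i$. So in both cases the natural induced series has factors which are, \emph{algebraically}, closed subgroups (resp.\ quotients) of the original factors.

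The main obstacle is exactly the topological defect stressed just before Lemma~\ref{hl}: to conclude that these factors lie in $\mathfrak X$ one must upgrade the algebraic isomorphisms $(H\cap G_{i+1})/(H\cap G_i)\cong(H\cap G_{i+1})G_i/G_i$ and $G_{i+1}N/G_iN\cong G_{i+1}/G_i(G_{i+1}\cap N)$ to isomorphisms of \emph{topological} groups, and one must know that the products $(H\cap G_{i+1})G_i$ and $G_iN$ are \emph{closed}. Both can fail for locally compact groups: the induced map on a factor need be neither open nor have closed image --- already $\mathbb Z^2\hookrightarrow\mathbb R$, with dense image, fails both --- so the second isomorphism theorem is unavailable in general, and the naive functorial series need not witness the conclusion (indeed the target group may only acquire a poly-$\mathfrak X$ series from its own \emph{intrinsic} structure). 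The safe regime is the one isolated in Lemma~\ref{hf}: when the subgroup factored out is compact, cocompact, or open, \cite[Theorem 5.3]{hr} restores the second isomorphism theorem topologically and the factor genuinely lands in $\mathfrak X$. I therefore expect the bulk of the work to be a reduction to this regime --- refining the series of $G$ (e.g.\ through closures, or by interposing compact and characteristic pieces) so that each factor of the induced series on $H$ (resp.\ on $G/N$) is realized topologically as a closed-subgroup, or a quotient, of some $G_{i+1}/G_i$, up to the idempotence of the poly-operation. Controlling this reduction, rather than the elementary series manipulations, is the crux.
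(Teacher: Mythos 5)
Your induced series---intersecting the $G_i$ with $H$ for the subgroup case, pushing them forward along $\pi$ for the quotient case---is exactly the argument the paper has in mind: its entire proof of Lemma~\ref{polyX} is the remark that it ``is proved similar to'' \cite[10.2.4]{p}, which is precisely this construction carried out in the discrete category. The problem is that your proposal stops where the actual work begins. Having correctly diagnosed the topological obstruction, you defer its resolution to an unspecified ``reduction to the compact/cocompact/open regime'' and never carry it out; in the topological category that deferred step is the whole content of the lemma, so what you have written is not a proof. The obstruction is genuine and is not removable by refining the series. Take $G=\mathbb R\times\mathbb T$ with the poly-$\mathfrak X$ series $1\unlhd \mathbb R\times 1\unlhd G$ (factors $\mathbb R$ and $\mathbb T$), and let $H=\{(n,e^{2\pi i\alpha n}):n\in\mathbb Z\}$ with $\alpha$ irrational. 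Then $H$ is closed and discrete, $H\cap(\mathbb R\times 1)=1$, and the induced factor $H/(H\cap G_1)\cong\mathbb Z$ maps continuously and injectively onto a dense, non-closed subgroup of $G/G_1\cong\mathbb T$; since the only closed subgroups of $\mathbb T$ are finite or all of $\mathbb T$, this factor is not topologically isomorphic to any closed subgroup of the corresponding factor of $G$, so the hypothesis ``$\mathfrak X$ stable under closed subgroups'' simply cannot be applied to it. (Here $H$ happens to be poly-$\mathfrak X$ anyway, because $\mathbb Z$ is a closed subgroup of $\mathbb R\in\mathfrak X$; the example kills the argument, not necessarily the statement.) The quotient case is worse still: $G_iN$ need not be closed, so $\pi(G_i)$ need not even be an admissible term of a series of closed subgroups of $G/N$, and passing to closures destroys the identification of the factors.

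Two further points. First, the gap is shared by the paper: the citation of \cite[10.2.4]{p} covers only the algebraic setting, and as stated for arbitrary $\mathfrak X$ the lemma needs either an additional hypothesis forcing the products $(H\cap G_{i+1})G_i$ and $G_iN$ to be closed (for instance terms that are compact, cocompact or open, where \cite[Theorem 5.3]{hr} restores the topological second isomorphism theorem exactly as in Lemma~\ref{hf}), or a strengthened closure property of $\mathfrak X$ (stability under continuous injective images of locally compact groups, not merely under closed subgroups). Second, in the paper's only application of the lemma---the last step of the proof of Lemma~\ref{polycycl}, where a finite-index subgroup of a discrete poly-$\{$infinite cyclic$\}$ group $M$ is considered---all groups involved are discrete, so Passman's argument applies verbatim and nothing topological is needed. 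Your write-up would become a correct proof if you either restricted the statement to one of these regimes or made the required extra hypothesis explicit; as it stands, the ``crux'' you name is left unresolved.
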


\vspace{.3cm}  
\begin{lemma}  \label{polycycl}
Each poly-$\{$cyclic, compact$\}$ group has a
characteristic closed cocompact subgroup that is poly-$\{$infinite cyclic$\}$.
\end{lemma}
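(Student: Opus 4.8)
The plan is to follow the classical discrete proof (cf. \cite[10.2.4]{p}), proceeding by induction on the Hirsch length $h(G)$, which is finite by Corollary \ref{pc}. In the base case $h(G)=0$ no factor of a defining series is infinite cyclic, so every factor is compact (finite cyclic groups being compact); hence $G$ is an iterated extension of compact groups and is therefore compact, and the trivial subgroup $N=1$ is characteristic, cocompact, and poly-$\{$infinite cyclic$\}$ via the empty series.

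For the inductive step I would work in two stages. First I would produce a \emph{normal} closed cocompact poly-$\{$infinite cyclic$\}$ subgroup $M \trianglelefteq G$. Starting from a defining subnormal series $1=G_0\unlhd\cdots\unlhd G_n=G$, the top quotient $G/G_{n-1}$ is cyclic or compact: when it is compact, $G_{n-1}$ is cocompact with $h(G_{n-1})=h(G)$ but a shorter series, and when it is infinite cyclic, $h(G_{n-1})=h(G)-1$, so in either case induction supplies such a subgroup inside $G_{n-1}$. One then reassembles using that poly-$\{$infinite cyclic$\}$ is closed under extensions (the poly-$\mathfrak X$ case of Lemma \ref{polyX}) together with the Hirsch formula of Lemma \ref{hf}. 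The bookkeeping that must be handled with care is torsion-freeness: to land in poly-$\{$infinite cyclic$\}$ rather than merely poly-$\{$cyclic, compact$\}$ one should first absorb the maximal compact normal subgroup, and to make $M$ normal in all of $G$ (not only in $G_{n-1}$) one passes to the normal core $\bigcap_{g\in G} gMg^{-1}$.

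The second stage, and the main obstacle, is the promotion of $M$ to a \emph{characteristic} subgroup. The natural candidate is $N:=\bigcap_{\sigma\in\mathrm{Aut}(G)}\sigma(M)$, which is invariant under every continuous automorphism and, being contained in $M$, is again poly-$\{$infinite cyclic$\}$ (closed subgroups of poly-$\{$infinite cyclic$\}$ groups are such, cf. Lemma \ref{polyX}). In the discrete case this $N$ stays of finite index because a finitely generated group has only finitely many subgroups of a given finite index, so the intersection is finite; but topologically the intersection of two cocompact subgroups need not be cocompact---already $\mathbb{Z}$ and $\sqrt{2}\,\mathbb{Z}$ meet trivially inside $\mathbb{R}$. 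This is the same phenomenon met in the characteristic Poincar\'{e} Lemma \ref{char}, where averaging over automorphisms yields only a \emph{locally} cocompact subgroup.

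Consequently the crux is a finiteness statement: one must show that the orbit $\{\sigma(M):\sigma\in\mathrm{Aut}(G)\}$ (and the $G$-conjugacy class entering the normal core of the first stage) is finite, or at least that the resulting intersection stays cocompact. I would attempt this by exploiting rigidity of the cocompact poly-$\{$infinite cyclic$\}$ part: each automorphism sends $M$ to another normal cocompact poly-$\{$infinite cyclic$\}$ subgroup, and one would try to confine these images by fixing the maximal compact normal subgroup and the image of $M$ in the corresponding quotient. It is precisely here that the topological theory diverges from the discrete one: the example $G=\mathbb{R}$, whose cocompact poly-$\{$infinite cyclic$\}$ subgroups are the $c\mathbb{Z}$ permuted transitively by $\mathrm{Aut}(\mathbb{R})=\mathbb{R}^{\times}$, shows that without such a rigidity input the orbit can be infinite and the intersection can degenerate, so this finiteness is the essential point to be secured.
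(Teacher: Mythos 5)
Your proposal is not a complete proof: you correctly isolate the promotion of the normal closed cocompact poly-$\{$infinite cyclic$\}$ subgroup $M$ to a \emph{characteristic} one as the crux, but you then leave that step unresolved, observing only that the naive intersection $\bigcap_{\sigma\in\mathrm{Aut}(G)}\sigma(M)$ may degenerate and that some unspecified ``rigidity input'' would be needed. That unfilled step is the gap. There is also a smaller structural wobble in your first stage: when the top factor $G/G_{n-1}$ is compact the Hirsch length does not drop, so the induction cannot run on $h(G)$ alone; one must induct on the length of the defining series (as the paper does) or set up a double induction.

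For comparison, the paper proceeds quite differently. It inducts along the given subnormal series $1=G_0\unlhd\cdots\unlhd G_n=G$, carrying a characteristic closed cocompact poly-$\{$infinite cyclic$\}$ subgroup $H_i\leq G_i$ up one step at a time. The compact-by-$\mathbb{Z}$ case is handled by a centralizer trick: if $N\unlhd L$ is compact with $L/N$ infinite cyclic generated by the class of $x$, some power $x^n$ centralizes $N$, so $\langle x^n\rangle$ is a normal infinite cyclic cocompact closed subgroup of $L$; pulling back over $H_i$ yields a normal closed cocompact poly-$\{$infinite cyclic$\}$ subgroup $M\unlhd G_{i+1}$. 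The characteristic step is then discharged by noting that $M$ is finitely generated, hence countable, hence discrete, and invoking the classical (discrete) Poincar\'{e} lemma to extract a characteristic closed cocompact subgroup of $G_{i+1}$ inside $M$. So the paper's answer to your crux is ``discreteness of $M$ plus the classical finite-index Poincar\'{e} lemma.'' Your own test case shows, however, that this is precisely where one should worry: $G=\mathbb{R}$ is poly-$\{$cyclic, compact$\}$ via $1\unlhd\mathbb{Z}\unlhd\mathbb{R}$, and the construction produces $M=\mathbb{Z}$, which is discrete, finitely generated, normal and cocompact, yet contains no subgroup that is both cocompact and characteristic in $\mathbb{R}$, since $\mathrm{Aut}(\mathbb{R})\supseteq\mathbb{R}^{\times}$ permutes the lattices $c\mathbb{Z}$ transitively and $\mathbb{R}$ itself is not poly-$\{$infinite cyclic$\}$. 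The classical Poincar\'{e} lemma concerns finite-index subgroups of a finitely generated (discrete) group, not cocompact discrete subgroups of a nondiscrete ambient group, so the paper's final step does not go through as written. In short: your proposal has a genuine gap, but the gap coincides with the step the paper itself does not adequately justify, and your example $\mathbb{R}$ indicates that the statement needs an additional hypothesis (for instance, excluding nontrivial connected normal pieces) before either argument can be completed.
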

\begin{proof}
We adapt the proof of \cite[10.2.5]{p}. Consider the situation that $N\unlhd L$ is compact with $L/N$ infinite cyclic. If $L = \langle N, x\rangle$, for some $x\in L$, then
some power $x^n$ of $x$ with $n\geq 1$ centralizes both $N$ and $x$. Therefore,
$x^n$ is central in $L$, and in particular, $\langle x^n\rangle$ is a normal infinite cyclic
cocompact closed subgroup of $L$.

Now take a poly-$\{$cyclic, compact$\}$ group $G$ and take a finite subnormal series
$$1=G_0\unlhd G_1\unlhd\cdots\unlhd G_n=G$$
with cyclic or compact quotients. We argue by induction on $i$ to show that each $G_i$ has a characteristic closed cocompact subgroup $H_i$ that is poly-$\{$infinite cyclic$\}$. This is trivial for $i=0$, and if it holds for $i$, then since $G_i\unlhd G_{i+1}$ and $H_i$ is characteristic
in $G_i$, then $H_i \unlhd G_{i+1}$. Let us first observe that $G_{i+1}$ has a normal closed cocompact poly-$\{$infinite cyclic$\}$ subgroup. This is clear if $G_{i+1}/G_i$ is compact, thus we may assume that $G_{i+1}/G_i$ is infinite cyclic. In this case, $L := G_{i+1}/H_i$ has a
compact normal subgroup $N = G_{i}/H_i$ with $L/N= G_{i+1}/G_i$  infinite cyclic. By the observation of the previous paragraph,  $L$ has a normal infinite cyclic
cocompact closed subgroup. The inverse image $M$ in $G_{i+1}$ of this group under the quotient map is then a normal poly-$\{$infinite cyclic$\}$ closed cocompact subgroup of $G_{i+1}$. In particular, $M$ is finitely generated and so countable. Since $M$ is locally compact in the relative topology of $G$, it has to be discrete. By the classical Poincare Lemma, $M$ contains a closed cocompact characteristic subgroup of $G_{i+1}$ (c.f., \cite[Theorem 7.1.7]{s}). Let us take $H_{i+1}$ to be this subgroup, which is  poly-$\{$infinite cyclic$\}$ by Lemma \ref{polyX}. This finishes the argument of the inductive step, and the induction stops at $G = G_n$. 
\end{proof}

\begin{lemma}  \label{tf}
Each noncompact  polycyclic-by-compact group has a closed nontrivial discrete finitely generated 
  torsion-free abelian characteristic subgroup.
\end{lemma}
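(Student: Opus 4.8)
The plan is to strip away the topology as quickly as possible and then run the classical discrete argument. Since $G$ is polycyclic-by-compact, Lemma \ref{polycycl} hands us a characteristic, closed, cocompact subgroup $H\unlhd G$ that is poly-$\{$infinite cyclic$\}$; this is the one substantial input, and it does essentially all of the work. Everything afterwards I would carry out entirely inside $H$, where no topology remains.

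First I would record the elementary features of $H$. Being poly-$\{$infinite cyclic$\}$, $H$ is finitely generated, countable, and torsion-free; as a closed subgroup of the locally compact group $G$ it is locally compact in the relative topology, and a countable locally compact Hausdorff group is discrete by Baire category (exactly as in the proof of Lemma \ref{polycycl}), so $H$ is discrete, and hence $H$ and all of its subgroups are closed in $G$. Crucially $H\neq 1$: were $H$ trivial, then $G=G/H$ would be compact, contrary to hypothesis. Thus $H$ is an infinite, discrete, finitely generated, torsion-free polycyclic group.

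Next I would peel off an abelian piece. As $H$ is solvable, its derived series $H=H^{(0)}\geq H^{(1)}\geq\cdots$ terminates at $1$; I take $A:=H^{(k)}$ to be the last nontrivial term, so that $A\neq 1$ and $A$ is abelian. Being a subgroup of the polycyclic group $H$, $A$ is polycyclic, and since it is abelian and torsion-free it is free abelian of finite rank, hence finitely generated; it is discrete and closed in $G$ because $H$ is. The remaining point is to upgrade ``characteristic in $H$'' to ``characteristic in $G$'': every term of the derived series is invariant under all automorphisms of $H$, and because $H$ is characteristic in $G$, any continuous automorphism $\sigma$ of $G$ restricts to an automorphism of $H$ and therefore satisfies $\sigma(A)=A$. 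Hence $A$ is a closed, nontrivial, discrete, finitely generated, torsion-free abelian characteristic subgroup of $G$.

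The genuinely substantive step is the reduction packaged in Lemma \ref{polycycl}; once $H$ is in hand the argument is formal, and the hard part is really just bookkeeping across the inclusion $H\leq G$. Two transfers deserve care: that discreteness and closedness of $A$ inside $H$ persist in $G$ (which holds since $H$ is itself discrete and closed), and that full invariance of $A$ in $H$ promotes to characteristicity in $G$ (which holds precisely because $H$ was produced as a \emph{characteristic} subgroup, so no automorphism of $G$ can move it off itself). Noncompactness of $G$ enters only to force $H\neq 1$, and hence $A\neq 1$.
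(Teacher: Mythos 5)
Your proposal is correct and follows essentially the same route as the paper's own proof: invoke Lemma \ref{polycycl} to obtain a characteristic, closed, cocompact, poly-$\{$infinite cyclic$\}$ subgroup $H$ (nontrivial because $G$ is noncompact), and take the last nontrivial term of the derived series of $H$. You merely spell out in more detail a few points the paper leaves implicit (discreteness of $H$ via countability, full invariance of derived-series terms, and the transfer of characteristicity from $H$ to $G$), all of which are accurate.
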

\begin{proof}
Let $G$ be a noncompact  polycyclic-by-compact group. By Lemma \ref{polycycl}, $G$ has a  closed cocompact poly-$\{$infinite cyclic$\}$ characteristic subgroup $H$. Since $G$ is not compact, $H\neq 1$. By definition, $H$ is discrete, finitely generated and solvable. Take  the
derived series
$$H=H_0\unrhd H_1\unrhd\cdots\unrhd H_{n+1}=1$$
for $H$, where $n$  is minimal with $H_{n+1}=1$, then $H_n$ is
a nontrivial abelian characteristic  subgroup of $G$. Finally, $H_n\leq H$
 is torsion free and hence infinite.
\end{proof}

We have already used an inductive argument (based on the indices in subnormal series) in the proof of Lemma \ref{polycycl}. Another instance of inductive argument, this time based on the Hirsch length, is used in the proof of the following extension of a classical result of Malcev for polycyclic-by-finite groups \cite{m}.

\vspace{.3cm}  
\begin{lemma} [Malcev] \label{pbc}
If $G$ is polycyclic-by-compact, then each closed subgroup $H\leq G$ is the intersection of all closed cocompact subgroups of $G$ containing $H$.
\end{lemma}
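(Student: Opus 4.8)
The plan is to argue by induction on the Hirsch length $h(G)$, which is finite by Corollary \ref{pc}. It suffices to prove the separation statement: for every $g\in G\setminus H$ there is a closed cocompact subgroup $K$ with $H\leq K$ and $g\notin K$. Indeed, $H$ is contained in every such $K$, and the separation statement says precisely that no point outside $H$ survives the intersection. For the base case $h(G)=0$, the polycyclic cocompact subgroup of $G$ has Hirsch length $0$, hence is finite, so $G$ itself is compact; then every closed subgroup is cocompact and we may take $K=H$.

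For the inductive step assume $h(G)=n>0$, the result for smaller Hirsch length, and (by the base case) that $G$ is noncompact. By Lemma \ref{tf}, $G$ has a nontrivial closed characteristic subgroup $A\cong\mathbb{Z}^r$ that is discrete, finitely generated and torsion-free, with $r=h(A)\geq 1$. Fix $g\in G\setminus H$. Since $A$ is subgroup separable (each $A/(H\cap A)$ is finitely generated abelian, hence residually finite), I would choose $n\geq 1$ with $g\notin H\cdot(nA)$: if $g\notin HA$ this holds for all $n$ because $H(nA)\subseteq HA$; otherwise $g=ha_0$ with $a_0\in A\setminus(H\cap A)$, and it is enough to separate $a_0$ from $H\cap A$ in the finite quotient $A/nA$, whereupon the Dedekind modular law gives $H(nA)\cap A=(H\cap A)(nA)\not\ni a_0$, so $g\notin H(nA)$. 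Set $B:=nA$. As $B$ is characteristic in $A$ and $A$ is characteristic in $G$, $B$ is a closed normal (characteristic) subgroup of $G$ with $h(B)=r\geq 1$, so the inequality in Lemma \ref{hf} yields $h(G/B)\leq h(G)-h(B)<h(G)$; moreover $G/B$ is polycyclic-by-compact by Lemma \ref{polyX}.

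Write $\pi\colon G\to G/B$ for the quotient map, so that $\pi(g)\notin\pi(H)$ by the choice of $n$. I would then apply the induction hypothesis in $G/B$ to the closed subgroup $\overline{\pi(H)}$ and the point $\pi(g)$, obtaining a closed cocompact subgroup $\bar K\supseteq\overline{\pi(H)}$ with $\pi(g)\notin\bar K$. Pulling back, $K:=\pi^{-1}(\bar K)$ is closed, contains $\pi^{-1}(\pi(H))=HB\supseteq H$, is cocompact in $G$ (because $G/K\cong(G/B)/\bar K$ is compact), and omits $g$. This closes the induction, \emph{provided} one knows that $\pi(g)\notin\overline{\pi(H)}$, equivalently $g\notin\overline{HB}$ with closure taken in $G$.

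The hard part is exactly this last upgrade, from the algebraic separation $g\notin HB$ to the topological separation $g\notin\overline{HB}$. In the discrete Malcev theorem \cite{m} the set $HB$ is automatically a subgroup that is already closed, so $\pi(H)$ is closed and $\pi(g)\notin\pi(H)$ is all one needs; here $HB$ is only a countable union of translates of the closed set $H$ by the discrete group $B$, and such a set can fail to be closed (the obstruction is the familiar winding of an incommensurable subgroup modulo a lattice). My plan to control $\overline{HB}$ is to exploit that $B=nA$ is discrete and characteristic and that $n$ may be taken arbitrarily large, shrinking $HB$ toward $H$, and to combine this with the topological Poincar\'e-type constructions of Lemmas \ref{pl} and \ref{char}: intersecting with a cocompact characteristic subgroup should force $\overline{HB}\cap A$ to be no larger than $(H\cap A)(nA)$, which excludes the relevant $A$-component of $g$ and hence keeps $g$ out of $\overline{HB}$. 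Establishing that this closure does not exceed its algebraic counterpart is the crux of the argument and the sole place where genuinely topological input is required.
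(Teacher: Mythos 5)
Your strategy---induction on $h(G)$, reduction to separating a point $g\notin H$ from $H$ by a closed cocompact subgroup, and passage to the quotient by a finite-index subgroup $B=nA$ of the normal free abelian subgroup $A$ supplied by Lemma \ref{tf}, with $n$ chosen via residual finiteness of $A/(H\cap A)$ and the Dedekind modular law so that $g\notin HB$---is essentially the paper's strategy (the paper phrases it as $\tilde H\leq\bigcap_{r\geq 1}HA^r\leq HA$ followed by the same modular-law computation, rather than as a pointwise separation, but the content is the same). However, your argument is not complete, and you say so yourself: to invoke the induction hypothesis in $G/B$ you must know $\pi(g)\notin\overline{\pi(H)}$, equivalently $g\notin\overline{HB}$, and you only establish the algebraic statement $g\notin HB$. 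This is a genuine gap, not a routine verification: $HB$ is the product of a closed subgroup with a discrete normal subgroup and can fail to be closed (the winding example you allude to, $\alpha\mathbb Z+\mathbb Z$ dense in $\mathbb R$, shows the closure can be all of $G$, in which case the quotient argument returns no information whatsoever). The ``plan'' you offer for closing the gap---forcing $\overline{HB}\cap A=(H\cap A)B$ by intersecting with characteristic cocompact subgroups as in Lemmas \ref{pl} and \ref{char}---is not an argument: those lemmas presuppose a $G$-invariant metric on the relevant coset space, nothing in them controls the closure of a product $HB$, and even if $\overline{HB}\cap A$ were computed as you hope, the case $g\in\overline{HA}\setminus HA$ would remain untreated.

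For what it is worth, you have put your finger on a step that the paper's own proof also passes over: applying the induction hypothesis to $G/A^r$ yields only $\tilde H\leq\overline{HA^r}$, whereas the displayed chain $\tilde H\leq\bigcap_{r\geq 1}HA^r$ tacitly treats $HA^r$ as closed before the modular-law computation is carried out. So your diagnosis of where the genuinely topological input is required is accurate; what is missing, in your write-up, is that input itself. Until $g\notin\overline{HA^r}$ for some $r$ is actually proved, the induction does not close and the lemma is not established.
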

\begin{proof}
Let $\tilde H$ be the intersection of all closed cocompact subgroups of $G$ containing $H$. To show that $K=H$, we proceed by induction on the Hirsch length $h(G)$. The result is trivial when $G$ is compact, that is, $h(G)=0$. When $G$ is not compact, it has a non trivial discrete finitely generated torsion-free abelian normal subgroup $A$. Since $A$ is abelian and torsion-free, the map $x\mapsto x^r$ is a group monomorphism. Let us denote its range by $A^r$. Then $A^r$ is a discrete (and so closed) normal subgroup of $G$. Also, since $A$ is finitely generated and torsion-free, it is free abelian, that is, $A$ is isomorphic to $\mathbb Z^n$, for some $n$ (which is then non-zero, as $A$ is not trivial). In particular, $\bigcap_{r\geq 1} A^r=1$ and $h(A)>0$. But then, since $A$ and $A^r$ are isomorphic, $h(A^r)>0$. It follows from Lemma \ref{hf} that $h(G/A^r)<h(G)$.  By the induction hypothesis applied to $G/A^r$ we have,
$$H\leq \tilde H\leq \bigcap_{r\geq 1} HA^r \leq HA,$$
hence
$$\tilde H\leq H(A\cap \bigcap_{r\geq 1} HA^r)=H\big(\bigcap_{r\geq 1} (H\cap A)A^r)\big)=H(H\cap A)=H,$$
which finishes the proof.
\end{proof}

\begin{lemma}  \label{sgp}
Let $G$ be polycyclic-by-compact and $H$ be a closed subgroup. Then $h(H)=h(G)$ if $H$ is cocompact. Conversely, if $H$ a closed normal subgroup with $h(H)=h(G)$, then  $H$ is cocompact.
\end{lemma}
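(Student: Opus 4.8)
The plan is to treat the two implications separately, both resting on the Hirsch formula (Lemma \ref{hf}) together with the observation that a polycyclic-by-compact group has vanishing Hirsch length exactly when it is compact. First I would record this observation: by Lemma \ref{polycycl} such a group contains a closed cocompact poly-$\{$infinite cyclic$\}$ (hence discrete) subgroup $P$, and the cocompact case of Lemma \ref{hf} gives $h(P)=h(G)$; since a poly-$\{$infinite cyclic$\}$ group has Hirsch length equal to its number of infinite cyclic factors, $h(G)=0$ forces $P=1$, and then cocompactness of $P$ yields that $G=G/P$ is compact. I would also note, via Lemma \ref{polyX} (together with the fact that the image $QH/H$ of a polycyclic cocompact subgroup $Q$ is again polycyclic and cocompact), that closed subgroups and Hausdorff quotients of polycyclic-by-compact groups are again polycyclic-by-compact, so every group appearing below has finite Hirsch length by Corollary \ref{pc}.

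For the converse statement, let $H\trianglelefteq G$ be closed normal with $h(H)=h(G)$. The inequality in the Hirsch formula (Lemma \ref{hf}) gives $h(G)\geq h(H)+h(G/H)$, whence $h(G/H)\leq h(G)-h(H)=0$, so $h(G/H)=0$. Since $G/H$ is polycyclic-by-compact, the preliminary observation forces $G/H$ to be compact, i.e. $H$ is cocompact. This direction is clean and uses only the easy (always valid) half of Lemma \ref{hf}.

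For the forward statement, let $H\leq G$ be closed cocompact; I would prove $h(H)=h(G)$ by reducing to a comparison of \emph{discrete} cocompact subgroups. Since $H$ is itself polycyclic-by-compact, Lemma \ref{polycycl} furnishes a discrete cocompact poly-$\{$infinite cyclic$\}$ subgroup $\Gamma\leq H$ with $h(\Gamma)=h(H)$, and likewise a discrete cocompact $P\leq G$ with $h(P)=h(G)$. By transitivity of cocompactness ($G=K_1H$ and $H=K_2\Gamma$ with $K_1,K_2$ compact give $G=K_1K_2\Gamma$), both $\Gamma$ and $P$ are discrete cocompact subgroups of $G$. Each is therefore coarsely dense in $G$ for a proper left-invariant metric, so by the Milnor--\v{S}varc lemma (applicable since $G$ is compactly generated) $\Gamma$ and $P$ are quasi-isometric to $G$, and hence to one another. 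As the Hirsch length of a discrete polycyclic group is a quasi-isometry invariant (indeed it agrees with the asymptotic dimension in the discrete polycyclic case, \cite{bcl}, \cite{ds}), I conclude $h(\Gamma)=h(P)$, and therefore $h(H)=h(\Gamma)=h(P)=h(G)$.

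The hard part is the topological bookkeeping in the forward direction, which is precisely why I route it through lattices and coarse invariance rather than through a naive intersection argument. One cannot simply intersect an optimal closed series of $G$ with $H$: because the second isomorphism theorem fails for topological groups (the recurring difficulty flagged after Lemma \ref{hl}), a cocompact $H$ may sit transversally to the abelian layers of $G$, so that intersecting a compact factor $G_{i+1}/G_i$ with $H$ can produce a non-closed factor of positive Hirsch length while an infinite cyclic factor collapses; the per-factor count is thus not monotone, and only the global total is conserved. Passing to the discrete cocompact subgroups $\Gamma,P$ removes the topology from the comparison and lets the known discrete invariance finish the job. A self-contained induction on $h(G)$ in the spirit of Lemma \ref{pbc} --- extracting the torsion-free abelian normal subgroup $A$ of Lemma \ref{tf} and reducing modulo $A^r$ --- is also available, but its delicate point is exactly to show that the relevant images and intersections stay closed and cocompact and that Hirsch length is conserved across the reduction.
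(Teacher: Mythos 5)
Your converse direction coincides with the paper's: both derive $h(G/H)=0$ from the subadditive half of Lemma \ref{hf} and then conclude compactness of $G/H$; you merely supply the details (that $G/H$ is again polycyclic-by-compact and that vanishing Hirsch length forces compactness) which the paper asserts without proof. The forward direction, however, is genuinely different. The paper stays internal: it takes the cocompact closed \emph{normal} subgroup $N=\bigcap_{g\in G}H^g\leq H$ of $G$ (attributed to the Poincar\'{e} construction, with cocompactness of this intersection only really argued later, in Proposition \ref{rc}) and applies the cocompact case of the Hirsch formula twice to get $h(H)=h(N)=h(G)$. You instead extract, via Lemma \ref{polycycl}, discrete cocompact poly-$\{$infinite cyclic$\}$ subgroups $\Gamma\leq H$ and $P\leq G$, observe that both are cocompact lattices in $G$ and hence quasi-isometric (equivalently, coarsely equivalent) to $G$ and to each other, and then import the Dranishnikov--Smith theorem that Hirsch length equals asymptotic dimension for discrete polycyclic groups, a quasi-isometry invariant. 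This is correct and has a real advantage: it does not need a cocompact normal core inside $H$, whereas the paper's appeal to Lemma \ref{pl} is strained (that lemma carries a metric hypothesis on $G/H$ and outputs only a \emph{locally} cocompact normal subgroup). The costs are (a) reliance on the external result of \cite{ds} plus Milnor--\v{S}varc rather than on the Hirsch formula alone --- note this is not circular with Proposition \ref{pcc}, since you only use the discrete case of \cite{ds}; and (b) the need to apply Lemma \ref{polycycl} to $H$ and to $G$, which presumes that a polycyclic-by-compact group is poly-$\{$cyclic, compact$\}$ (i.e., admits a subnormal series with cyclic or compact factors); this identification is not immediate when the polycyclic cocompact subgroup is not normal, but it is exactly the same implicit step the paper itself takes in Lemma \ref{tf}, so your argument is no less rigorous than the source. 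A minor polish: since polycyclic-by-compact does not force second countability, it is cleaner to phrase the comparison of $\Gamma$ and $P$ as a coarse equivalence with respect to an adapted pseudo-metric (\cite[4.C.5]{ch}) rather than via a plig metric; asymptotic dimension is a coarse invariant, so the conclusion $h(\Gamma)=h(P)$ is unaffected.
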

\begin{proof}
First assume that $H$ is cocompact and $N$ be the cocompact closed normal  subgroup of $G$ contained in $H$, constructed in the proof of Poincar\'{e} lemma. Then clearly $h(G/N)=h(H/N)=0$, and so $h(H)=h(N)=h(G)$, by the Hirsch formula in the cocompact case.
Conversely, if $H$ is normal, then $$h(G)\geq h(H)+h(G/H)=h(G)+h(G/H),$$
thus $h(G/H)=0$. Since $G/H$ is also polycyclic-by-compact, it could only have zero Hirsch length if it is compact. Therefore, $H$ is cocompact.
\end{proof}

Note that in the discrete case, the above result  could be proved without normality assumption by an inductive argument on the Hirsch length \cite[10.2.10]{p}. Let us see what goes wrong in the topological case. To use induction on $h(G)$, let us first consider the case $h(G)=0$, then $G$ is compact and the claim is trivial. If $G$ is noncompact, one could use Lemma \ref{tf} to choose a normal torsion-free infinite discrete abelian subgroup $A$
of $G$. If we knew that the Hirsch formula
$$h(H) = h(H \cap A) + h(H/H \cap A)$$
holds, the rest would follow: the right hand side of the above equality is less than or equal 
$h(A) + h(G/A)$ which in turn is at most $h(G),$
and since $h(H) = h(G)$, it follows that,
$h(H \cap A) = h(A)$, which yields $h(A/H \cap A) = 0$, showing that $H \cap A\leq A$ is cocompact. Now choose a closed locally cocompact characteristic subgroup $B$ of $A$ contained in $H \cap A$ by Lemma \ref{char}. Since $A\unlhd G$ and $B\leq A$ is characteristic, we get $B\unlhd G$. Let us observe that $h(B)>0$, indeed, if $h(B)=0$, then $B$ is compact, but then both $B$ and $A/B$ are locally elliptic, and so is $A$ \cite[4.D.6]{ch}. In particular, $A$ has to be locally finite by \cite[4.D.2]{ch}, which in turn contradicts \cite[2.E.17(3a)]{ch}, as $A$ has no non-trivial finite subgroup (since it is torsion-free). This proves the claim,  which implies that   $h(H/B) = h(G/B) < h(G)$. Now by the induction hypothesis, we have
$H/B\leq G/B$ is cocompact and so is $H\leq G$. 

\vspace{.3cm}  
\begin{proposition} [Hirsch] \label{rc}
Polycyclic-by-compact groups are residually compact ($RC$) and always have a regular $RC$-approximation.
\end{proposition}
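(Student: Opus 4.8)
The plan is to avoid constructing the approximation directly inside $G$ and instead to push the whole problem down to a discrete subgroup where the classical theory applies verbatim. First I would invoke Lemma~\ref{polycycl} to produce a characteristic closed cocompact subgroup $H\leq G$ that is poly-$\{$infinite cyclic$\}$; as recorded in the proof of Lemma~\ref{tf}, such an $H$ is discrete and finitely generated, hence an ordinary discrete polycyclic group. By the classical theorem of Hirsch quoted in the introduction, $H$ is residually finite, and this is the only input from the discrete theory that I will need.

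The next step is to manufacture a canonical descending chain inside $H$. Since $H$ is finitely generated it has only finitely many subgroups of any given index, so for each $j\geq 1$ the subgroup $K_j:=\bigcap\{L\leq H:[H:L]\leq j\}$ is of finite index in $H$; because any automorphism of $H$ permutes the subgroups of a fixed index, each $K_j$ is in fact \emph{characteristic} in $H$. The sequence $(K_j)$ is visibly decreasing, and residual finiteness gives $\bigcap_j K_j=\bigcap\{L\leq H:[H:L]<\infty\}=1$. I would then transport these properties back to $G$, and this is where the two structural features of $H$ (discreteness and characteristicity) do the work. As $H$ is discrete and closed in $G$, each finite-index $K_j$ is open and closed in $H$, hence closed in $G$; and since $K_j$ has finite index in the cocompact $H$, writing $G=FH$ with $F$ compact and $H=\bigcup_i h_iK_j$ a finite union of cosets yields $G=(\bigcup_i Fh_i)K_j$, so $K_j$ is itself cocompact in $G$. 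Finally, characteristicity is transitive: any continuous automorphism of $G$ preserves $H$, restricts to a (necessarily continuous) automorphism of the discrete group $H$, and therefore fixes $K_j$ setwise. Thus each $K_j$ is characteristic, in particular normal, in $G$, and $(K_j)$ is a decreasing sequence of closed cocompact normal subgroups with trivial intersection — an $RC$-approximation — which is automatically regular because its terms are normal (the remark after Definition~\ref{rrc}). The compact case needs no separate treatment, since there $H=1$ and the construction returns the constant approximation $(1)$, which is legitimate as $1$ is cocompact in a compact group.

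The step I expect to require the most care — and the reason one cannot simply argue inside $G$ — is that the family of cocompact closed subgroups of $G$ is \emph{not} closed under finite intersection: incommensurable lattices, as in the $SL(2,\mathbb{R})$ discussion following Lemma~\ref{pl}, can meet in a subgroup of strictly smaller Hirsch length, so there is no way to convert an arbitrary countable separating family (such as one could extract from Malcev's Lemma~\ref{pbc} via Lindel\"of-ness) into a \emph{decreasing} one. Passing to the discrete characteristic cocompact subgroup $H$ is precisely the device that sidesteps this obstruction: inside $H$ the intersection of finite-index subgroups is again of finite index, so a genuine descending chain exists there, and the two transfer principles above (finite index in a cocompact subgroup is cocompact; characteristic is transitive) carry that chain back to $G$ without ever intersecting two cocompact subgroups of $G$.
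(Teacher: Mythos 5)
Your proof is correct, but it takes a genuinely different route from the paper's. The paper's primary argument applies Malcev's Lemma \ref{pbc} to the trivial subgroup: every closed cocompact subgroup $L$ contains the closed cocompact normal subgroup $\bigcap_{x\in F_L}L^{x}$, so the intersection of all closed cocompact normal subgroups of $G$ is trivial; a secondary argument sketched after the proposition inducts on the Hirsch length via the subgroups $A_r$ of $r$-th powers inside the abelian subgroup of Lemma \ref{tf}. You instead pass to the discrete, finitely generated, characteristic cocompact subgroup $H$ supplied by Lemma \ref{polycycl}, invoke the classical residual finiteness of discrete polycyclic groups, and transport the canonical characteristic finite-index filtration $K_j=\bigcap\{L\leq H:[H:L]\leq j\}$ back up to $G$ using the two transfer principles (finite index in a cocompact subgroup is cocompact; characteristicity is transitive). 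What your version buys is exactly the point you flag at the end: the definition of a residually compact approximation asks for a decreasing \emph{sequence}, and since closed cocompact subgroups of $G$ need not intersect in cocompact subgroups, knowing only that the closed cocompact normal subgroups have trivial total intersection does not by itself produce such a sequence, whereas your $(K_j)$ is an honest decreasing chain of normal (indeed characteristic) subgroups, so regularity is immediate from the remark after Definition \ref{rrc}. What you pay is a heavier reliance on the discrete theory where the paper stays within its topological machinery; both arguments share the paper's implicit identification of polycyclic-by-compact groups with poly-$\{$cyclic, compact$\}$ groups needed to invoke Lemma \ref{polycycl}, and your disposal of the compact case ($H$ discrete, compact and torsion-free, hence trivial) is sound.
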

\begin{proof}
Take any cocompact closed subgroup $L\leq G$ and take $N:=\bigcap_{g\in G} L^g$ which is the largest closed normal subgroup of $G$ contained in $L$. Choose a compact subset $F_L\subseteq G$ of the representatives of the transversal decomposition $G=\bigcup_{x\in F_L} Lx$ into right $L$-cosets and observe that $N=\bigcap_{x\in F_L} L^x$ is cocompact. Take any closed subgroup $H\leq G$, and observe that in the notations of the Malcev lemma,
\begin{align*}
\tilde H &:=\bigcap\{L: H\leq L\leq G, L \ \text{is closed and cocompact}\}\\
&:=\bigcap\{HN: N\unlhd G, N \ \text{is closed and cocompact}\}.
\end{align*}
Applying this to $H=1$ we get $\tilde H=1$, i.e.,
$$\bigcap\{N: N\unlhd G, N \ \text{is closed and cocompact}\}=1,$$
that is, $G$ is $RC$, with a regular $RC$-approximation.
\end{proof}

\begin{remark}\label{rem}
Lemma \ref{sgp} could be used to show that polycyclic-by-compact groups are indeed either compact or residually finite ($RF$):  we argue by induction on the Hirsch length $h(G)$. Assume that
$G$ is noncompact. Let $A$ be a normal torsion-free infinite abelian subgroup of $G$
given by Lemma \ref{tf}. Then the set $A_r$ consisting of $r$-th powers of elements of $A$  is a  characteristic subgroup of $A$ with finite index and $\bigcap_{r\geq 1} A_r = 1$. Since $G/A_r$ is finite (and so discrete), $A_r$ is an open subgroup.
On the other hand, $h(G/A_r) < h(G)$, for each $r$, and by the inductive assumption, $G/A_r$ is $RF$. Now since
$\bigcap_{r\geq 1} A_r = 1$, it follows that $G$ is also $RF$.

\end{remark}

\vspace{.3cm}
Next we want to show that compactly generated topologically virtually nilpotent groups are residually compact (indeed, either compact or algebraically residually compact). We first need a modification of result due to Hirsch, who showed that finitely generated nilpotent groups are polycyclic \cite{h}. 

\vspace{.3cm}
\begin{lemma} [Hirsch] \label{pcbc}
	Compactly generated nilpotent groups are polycyclic-by-compact.
\end{lemma}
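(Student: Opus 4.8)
The plan is to prove the slightly stronger statement that every compactly generated (locally compact) nilpotent group $G$ is poly-$\{$cyclic, compact$\}$; Lemma~\ref{polycycl} then immediately upgrades this to the desired conclusion, since it furnishes a characteristic closed cocompact subgroup that is poly-$\{$infinite cyclic$\}$, and any such subgroup is discrete, finitely generated, torsion-free and solvable, i.e. a genuine polycyclic group, so that $G$ is indeed polycyclic-by-compact. I would argue by induction on the nilpotency class $c$ of $G$.

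For the base case $c\le 1$, i.e. $G$ abelian, I would invoke the structure theorem for compactly generated locally compact abelian groups, which gives $G\cong \mathbb{R}^a\times\mathbb{Z}^b\times K$ with $K$ compact. Each direct factor is then poly-$\{$cyclic, compact$\}$: the $\mathbb{Z}$ factors are cyclic, $K$ is compact, and $\mathbb{R}$ sits in the extension $\mathbb{Z}\unlhd\mathbb{R}$ with compact quotient $\mathbb{R}/\mathbb{Z}$. Since the class of poly-$\{$cyclic, compact$\}$ groups is closed under extensions (by concatenating subnormal series), and hence under finite direct products, it follows that $G$ is poly-$\{$cyclic, compact$\}$.

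For the inductive step ($c\ge 2$), I would form the closed lower central series $\gamma_1=G$, $\gamma_{i+1}=\overline{[\gamma_i,G]}$, and set $Z:=\gamma_c$, the last nontrivial term, which is a closed central subgroup of $G$. The quotient $G/Z$ is compactly generated and nilpotent of class at most $c-1$, hence poly-$\{$cyclic, compact$\}$ by the inductive hypothesis. The heart of the argument is to show that $Z$ is compactly generated and abelian, for then the base case applies to $Z$ and, using closure of poly-$\{$cyclic, compact$\}$ under extensions, $G$ is poly-$\{$cyclic, compact$\}$ as well. Abelianity is clear since $[Z,G]\le\gamma_{c+1}=1$. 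For compact generation, fix a compact symmetric generating set $S$ with $G=\langle S\rangle$ as an abstract group. Because $\gamma_{c+1}=1$, ordinary commutator calculus shows that the abstract group $\gamma_c$ is generated by the weight-$c$ left-normed commutators $[s_1,\dots,s_c]$ with $s_j\in S$; these form the compact set $C$, the continuous image of $S^c$. Hence $Z=\overline{\langle C\rangle}$ is topologically generated by the compact set $C$. As a closed subgroup of the locally compact group $G$, $Z$ is itself locally compact, and a locally compact group topologically generated by a compact set is compactly generated (enlarging $C$ by a compact identity neighborhood yields an open, hence closed, dense subgroup, which must be all of $Z$). Thus $Z$ is a compactly generated locally compact abelian group, and the induction closes; a final appeal to Lemma~\ref{polycycl} delivers the theorem.

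The main obstacle is precisely the compact generation of $Z=\gamma_c$ (equivalently, of each lower central factor $\gamma_i/\gamma_{i+1}$): one must transfer the purely algebraic fact that iterated commutators in a generating set generate the lower central terms from the abstract group $G=\langle S\rangle$ to the topological setting, keeping careful track of closures and density so that the compact family of commutators genuinely topologically generates $Z$, and then apply the standard fact that a locally compact group topologically generated by a compact set is compactly generated. One should also check at the outset that $\gamma_c$, defined as the closure of the abstract commutator term, is a bona fide Hausdorff locally compact abelian group, so that the structure theorem for compactly generated locally compact abelian groups is applicable.
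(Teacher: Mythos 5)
Your proof is correct, but it follows a genuinely different route from the paper's. The paper also inducts on the nilpotency class, but in the inductive step it works with a closed central term $G_1$, cites \cite[Proposition 5.A.7]{ch} for its compact generation, and then directly manufactures a cocompact polycyclic subgroup of $G$ by lifting a polycyclic cocompact subgroup of $G/G_1$ back to $G$ and splicing it with a characteristic cocompact polycyclic subgroup of $G_1$; this forces the rather delicate bookkeeping $G=LN=\cdots=LFKBD$, an appeal to the discrete Hirsch theorem to write a finitely generated nilpotent lift $C$ as $FD$ with $D$ polycyclic, and a verification that $BD$ is a cocompact polycyclic subgroup. You sidestep all of that by proving the intermediate claim that $G$ is poly-$\{$cyclic, compact$\}$: since that class is closed under extensions by concatenation of closed subnormal series (essentially Lemma \ref{polyX}), the induction closes trivially, and a single application of Lemma \ref{polycycl} at the end converts the conclusion into ``polycyclic-by-compact'' exactly as the paper itself does in Lemma \ref{tf}. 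You also replace the citation for compact generation of the central term by a self-contained argument: the abstract bottom term $\gamma_c$ of the lower central series is generated by the weight-$c$ left-normed commutators in a compact generating set, these form a compact set, and a locally compact group topologically generated by a compact set is compactly generated. What the paper's route buys is an explicit cocompact polycyclic subgroup without ever invoking extension-closure of an auxiliary class; what yours buys is a substantially cleaner induction and independence from \cite[Proposition 5.A.7]{ch} (at the cost of working with $\gamma_c$ rather than an arbitrary central term, and of the routine checks that the closed and abstract lower central series have the same closures and that the closure of $\gamma_c$ is still central). Both are valid.
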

\begin{proof}
	We adapt the inductive argument of Hirsch. Let $G$ be nilpotent of class $r$ with a compact set $S$ of generators. If $r=1$ then $G$ is a compactly generated abelian group, which is then polycyclic-by-compact by \cite[Proposition 33]{mo}. If $r>1$, then let $G_1$ be the last  non-trivial term in the upper central series of $G$. Without loss of generality we may assume that $G_1$ is closed (otherwise use its closure instead) and so compactly generated by \cite[Proposition 5.A.7]{ch}. Then $G_1$ is abelian and compactly generated and so polycyclic-by-compact, by the argument in case of $r=1$. Let $A\leq G_1$ be a polycyclic cocompact subgroup. By the argument of \cite[Proposition 33]{mo}, we may also take $A$ to be closed. Use the same argument as in the proof of Lemma \ref{polycycl} to choose a cocompact closed characteristic subgroup $B$ of $G_1$ contained in $A$. Then since $A$ is polycyclic, so is $B$. Choose a compact subset $K\subseteq G_1$ with $G_1=KB$. Since $G/G_1$ is compactly generated and nilpotent of class $r-1$, by induction hypothesis  it is polycyclic-by-compact. Choose a polycyclic cocompact  subgroup $N/G_1$ and compact subset $E\subseteq G/G_1$ with $G/G_1=E(N/G_1)$. By \cite[Lemma 2.C.9]{ch}, we may choose a compact subset $L\subseteq G$ with $LN/G_1=E$. Thus $G=LN$. Finally, since $N/G_1$ is polycyclic, it is finitely generated, hence there is a finitely generated subgroup $C$ of $N$ with $N=CG_1$. But $C$ is also nilpotent as a subgroup of $G$, thus it is polycyclic-by-finite \cite{h}, that is $C=FD$, for a polycyclic group $D$ and a finite subset $F$. Now we have 
	$$G=LN=CLG_1=LFDG_1=LFG_1D=LFKBD,$$
	where the fourth equality follows from the fact that $G_1$ is normal in $N$. Since $B\leq G_1$ is a characteristic subgroup and $G_1\unlhd G$, we have $D\unlhd G$, and so $BD$ is a subgroup of $G$. But $BD/D\simeq B/(B\cap D)$ algebraically, and the right hand side is polycyclic as a quotient of a polycyclic group. Thus $BD$ is polycyclic as an extension of a polycyclic group by another polycyclic group \cite[7.1.13(c)]{s}. On the other hand, by the continuity of the product map, $LFK$ is compact in $G$, and so $BD$ is a cocompact subgroup of $G$. Therefore, $G$ is polycyclic-by-compact and the inductive argument is complete.
\end{proof}

\begin{lemma}  \label{tfree}
	Each compactly generated topologically virtually nilpotent group contains a finitely generated torsion-free nilpotent cocompact subgroup.
\end{lemma}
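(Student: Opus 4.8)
The plan is to reduce the problem to a closed, compactly generated, nilpotent, cocompact subgroup, apply Hirsch's Lemma~\ref{pcbc} to it, and then read off the desired subgroup from Lemma~\ref{polycycl}, exactly as in the proof of Lemma~\ref{tf}.

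First I would produce a suitable $M$. By hypothesis $G$ has a nilpotent cocompact subgroup $M_0$; I would replace it by its closure $M$, which is still cocompact (if $G=K\cdot M_0$ with $K$ compact, then $G=K\cdot\overline{M_0}$) and still nilpotent (in a Hausdorff topological group one has $[\overline{A},\overline{B}]\subseteq\overline{[A,B]}$, so the closure of an $r$-step nilpotent subgroup is $r$-step nilpotent). As a closed cocompact subgroup of the compactly generated group $G$, the group $M$ is compactly generated \cite{ch}, so Lemma~\ref{pcbc} applies and shows that $M$ is polycyclic-by-compact.

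Next I would extract the subgroup. Since $M$ is polycyclic-by-compact, Lemma~\ref{polycycl} yields a characteristic closed cocompact poly-$\{$infinite cyclic$\}$ subgroup $H\leq M$, just as in the proof of Lemma~\ref{tf}. Being poly-$\{$infinite cyclic$\}$, the group $H$ is finitely generated and torsion-free: an extension of a torsion-free group by $\mathbb{Z}$ is torsion-free and an extension of a finitely generated group by $\mathbb{Z}$ is finitely generated, so both properties follow by induction on the length of the defining series. As $H$ is countable and closed in the locally compact group $M$, it is discrete. It is nilpotent, being a subgroup of the nilpotent group $M$. Finally, cocompactness is transitive, so from $H$ cocompact in $M$ and $M$ cocompact in $G$ we conclude that $H$ is cocompact in $G$. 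Thus $H$ is a finitely generated torsion-free nilpotent cocompact subgroup of $G$, as required.

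The step I expect to be the most delicate is the first: one must pass to the closure of the given nilpotent cocompact subgroup in order to have a genuine closed subgroup to which Lemma~\ref{pcbc} applies, all the while checking that neither nilpotency, nor cocompactness, nor compact generation is lost. Once $M$ is in hand the extraction of $H$ parallels the proof of Lemma~\ref{tf}; the only additional observation is that the nilpotency of $H$ now comes for free from the nilpotency of $M$, so that there is no need to descend to a term of the derived series.
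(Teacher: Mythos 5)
Your proof is correct, and its skeleton coincides with the paper's: pass to a closed nilpotent cocompact subgroup, note it is compactly generated, and apply Lemma~\ref{pcbc} to get a polycyclic-by-compact group. The difference is in how torsion-freeness is extracted at the end. The paper chooses a polycyclic cocompact subgroup $A$ of the nilpotent cocompact subgroup and invokes the classical fact that polycyclic groups are torsion-free-by-finite (Wehrfritz, Corollary~2.7 of \cite{w}), then passes to the finite-index torsion-free subgroup $B$, which inherits finite generation from $A$ and nilpotency from the ambient nilpotent subgroup. You instead invoke Lemma~\ref{polycycl} to produce a characteristic closed cocompact poly-$\{$infinite cyclic$\}$ subgroup, whose torsion-freeness and finite generation follow by an elementary induction on the length of the series. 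Your route stays entirely inside the paper's own lemmas (mirroring the proof of Lemma~\ref{tf}) and yields a subgroup that is moreover closed and characteristic in the intermediate group, whereas the paper's route is shorter at the cost of an external citation; both inherit whatever care is needed in identifying ``polycyclic-by-compact'' with ``poly-$\{$cyclic, compact$\}$'' when applying Lemma~\ref{polycycl}, a point the paper itself glosses over in Lemma~\ref{tf}. One further merit of your write-up: you justify the passage to the closure of the given nilpotent cocompact subgroup (via $[\overline{A},\overline{B}]\subseteq\overline{[A,B]}$), a step the paper's proof absorbs into the phrase ``by definition, $G$ has a closed nilpotent cocompact subgroup.''
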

\begin{proof}
	Let $G$ be compactly generated and TVN. By definition, $G$ has a closed nilpotent cocompact subgroup $H$. Then $H$ is compactly generated \cite[2.C.8(3)]{ch} and so polycyclic-by-compact by Lemma \ref{pcbc}. Choose a polycyclic cocompact subgroup $A$ of $H$ and observe that $A$ is torsion-free-by-finite \cite[Corollary 2.7]{w}. The finite index torsion-free subgroup $B$ of $A$ would then be cocompact in $G$, finitely generated (as it is a finite index subgroup of the finitely generated group $A$), and nilpotent (as it is a subgroup of the nilpotent group $H$).
\end{proof}

Note that in the above proposition we could not guarantee that $G$ has a cocompact {\it closed} polycyclic subgroup, since though the closure of a cocompact subgroup is again cocompact, the closure of a polycyclic subgroup may fail to be polycyclic (e.g. take the dense polycyclic subgroup $\mathbb Z+\alpha\mathbb Z$ of $\mathbb R$, for $\alpha$ irrational). 

Lemma \ref{pcbc} and Proposition \ref{rc} prove the following extension of a classical result due to Hirsch (c.f. \cite[2.10, 2.13]{w}) for topological groups.

\vspace{.3cm}
\begin{proposition} [Hirsch] \label{rc2}
	Topologically virtually nilpotent (TVN) compactly generated (resp., noncompact) groups are  residually compact (resp., residually finite) and  have a regular $RC$-app. (resp., $RF$-app.). 
\end{proposition}

\vspace{.3cm}
Combining Remark \ref{rem} and Propositions \ref{asdim}, \ref{rc}, and \ref{rc2}, we get the first main result of this section. 

\vspace{.3cm}
\begin{theorem}  \label{main}
	Topologically virtually nilpotent groups and polycyclic-by-compact groups have standard box space.
\end{theorem}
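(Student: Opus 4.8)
The plan is to exhibit, for each group in the two classes, a dominating regular $RC$-approximation; its box space is then by definition the standard box space $\Box_s G$. I would first reduce the topologically virtually nilpotent case to the polycyclic-by-compact one. Indeed, a compactly generated TVN group $G$ contains, by Lemma~\ref{tfree}, a finitely generated torsion-free nilpotent cocompact subgroup $H$; being finitely generated discrete nilpotent, $H$ is polycyclic (Hirsch), and by transitivity of cocompactness $G$ is polycyclic-by-compact. (The same conclusion holds for compactly generated nilpotent groups directly by Lemma~\ref{pcbc}.) Thus it suffices to produce a dominating regular $RC$-approximation for an arbitrary polycyclic-by-compact group, and then transport it across the cocompact inclusion $H\leq G$ using Lemma~\ref{subg} together with Proposition~\ref{asdim}(iii).

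So let $G$ be polycyclic-by-compact. By Proposition~\ref{rc} it is $RC$ and admits a regular $RC$-approximation, so the only real issue is to upgrade this to a \emph{dominating} one. The idea is to route the whole construction through the finitely generated discrete characteristic core supplied by Lemma~\ref{polycycl}: there is a characteristic closed cocompact poly-$\{$infinite cyclic$\}$ subgroup $P\leq G$, which is discrete and finitely generated. Since $P$ is finitely generated it has only countably many subgroups of each finite index, and its verbal (power) subgroups give a canonical decreasing chain $P=P_0\supseteq P_1\supseteq\cdots$ of characteristic finite-index --- hence discrete cocompact --- subgroups with $\bigcap_n P_n=1$. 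Because $P$ is characteristic in $G$, each $P_n$ is normal in $G$; adjusting each $P_n$ by the Poincar\'e construction of Lemma~\ref{char} if necessary to guarantee cocompactness and regularity in $G$, I obtain a regular $RC$-approximation $\sigma=(G_n)$ of $G$ built from characteristic data.

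To see that $\sigma$ is dominating, take any regular $RC$-approximation $\tau=(H_m)$ of $G$. For each $m$, the subgroup $H_m\cap P$ is cocompact in $P$ by Lemma~\ref{sgp} (or directly, since both $H_m$ and $P$ are cocompact in $G$), hence of finite index in the discrete finitely generated group $P$. By cofinality of the canonical chain there is $n$ with $P_n\subseteq H_m\cap P$, and tracing this containment back through the construction yields $G_n\subseteq H_m$. Thus $\tau\precsim\sigma$ for every $\tau$, so $\sigma$ is a maximum element of $\Lambda(G)$, i.e. a dominating regular $RC$-approximation, and $\Box_s G:=\Box_\sigma G$ is the standard box space. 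Transporting along $H\leq G$ via Lemma~\ref{subg} and Proposition~\ref{asdim}(iii) then furnishes the standard box space in the TVN case as well.

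The step I expect to be the main obstacle is precisely the dominating property. A polycyclic-by-compact group may carry \emph{uncountably} many cocompact closed subgroups --- already $\mathbb{R}$ has all the lattices $c\mathbb{Z}$ --- so one cannot dominate them by naive enumeration, and this is exactly where the sufficient condition recorded after Definition~\ref{rrc} (countably many normal cocompact subgroups) does not literally apply. The proposed remedy is to make only the finitely generated discrete core $P$ carry the combinatorics: all that matters for domination is the finite-index lattice of $P$, which is countable and has a cofinal verbal chain. The delicate technical point is the implication ``$P_n\subseteq H_m\cap P \Rightarrow G_n\subseteq H_m$'': recovering containment in $G$ from containment in the core requires that the cocompact (and regular) structure of the approximation be faithfully controlled by $P$, and verifying this compatibility --- rather than the combinatorics of $P$ itself --- is where the argument must be pushed carefully.
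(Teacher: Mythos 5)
Your plan is more ambitious than the paper's own argument, which simply cites Propositions \ref{asdim}, \ref{rc} and \ref{rc2} (i.e., the existence of a \emph{regular} $RC$-approximation) and never actually constructs a dominating one. The reduction of the TVN case to the polycyclic-by-compact case via Lemmas \ref{tfree} and \ref{pcbc}, and the choice of the characteristic discrete poly-$\{$infinite cyclic$\}$ core $P$ of Lemma \ref{polycycl} together with its cofinal chain of finite-index characteristic subgroups $P_n$, are sound; since $P$ is characteristic, closed and discrete, the $P_n$ are closed normal cocompact subgroups of $G$ with trivial intersection, so $(P_n)$ is a regular $RC$-approximation without any further ``adjustment,'' and the implication you single out as delicate ($P_n\subseteq H_m\cap P\Rightarrow G_n\subseteq H_m$) is in fact immediate because $G_n=P_n\subseteq P$.

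The genuine gap sits one step earlier, in the domination argument: the claim that $H_m\cap P$ is cocompact (equivalently, of finite index) in the discrete group $P$ for an \emph{arbitrary} cocompact closed subgroup $H_m$ of $G$ is false. The intersection of two cocompact closed subgroups need not be cocompact: already in $G=\mathbb R$ the lattices $P=\mathbb Z$ and $H=\sqrt2\,\mathbb Z$ are both closed and cocompact, yet $H\cap P=\{0\}$; similarly $\mathbb Z^2$ and $\sqrt2\,\mathbb Z^2$ in $\mathbb R^2$. Lemma \ref{sgp} does not rescue this: it gives $h(H_m)=h(G)=h(P)$, but to deduce that $H_m\cap P$ has full Hirsch length in $P$ you would need a Hirsch formula for a non-normal intersection, which is precisely what the paper says it cannot establish in the topological setting. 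Worse, the example $G=\mathbb R$, with the regular $RC$-approximations $\tau=(2^m\alpha\,\mathbb Z)_m$ for varying $\alpha$, shows that no single decreasing sequence of cocompact closed subgroups with trivial intersection can eventually be contained in every cocompact lattice $\alpha\mathbb Z$ (choose $\alpha$ incommensurable with all the terms of your sequence), so a maximum element of the set $\Lambda(G)$ of $RC$-approximations does not exist even for $G=\mathbb R$, which is polycyclic-by-compact. Your strategy therefore cannot be repaired as stated: any correct reading of the theorem must weaken ``dominating'' (for instance, to domination within the family of normal or characteristic cocompact subgroups, where your chain $(P_n)$ does the job), and on this point the paper itself is silent.
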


\vspace{.3cm}
Finally, using Lemma \ref{tfree}, we get the second main result of the section.

\vspace{.3cm}
\begin{theorem}  \label{main2}
	Topologically virtually nilpotent compactly generated groups have finite asymptotic dimension.
\end{theorem}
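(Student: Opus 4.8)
The plan is to reduce to the classical discrete setting via the cocompact subgroup furnished by Lemma~\ref{tfree}, and then transport finiteness back to $G$ through the box space machinery of Section~\ref{Box}. First I would apply Lemma~\ref{tfree} to fix a finitely generated, torsion-free, nilpotent, \emph{cocompact} subgroup $B\leq G$. Being finitely generated and discrete, $B$ is an ordinary finitely generated discrete group; it is moreover polycyclic (hence residually finite), so the entire weight of the estimate can be pushed onto $B$, where discrete coarse geometry is available.

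Next I would set up the box space comparison. Since $G$ is compactly generated and TVN, Theorem~\ref{main} (via Proposition~\ref{rc2}) guarantees that $G$ is $RC$ and possesses a dominating regular $RC$-approximation, so the standard box space $\Box_s G$ is defined and ${\rm asdim}(\Box_s G)$ is well defined. Applying Proposition~\ref{asdim}(i) to this dominating regular $RC$-app yields the basic inequality ${\rm asdim}(G)\leq{\rm asdim}(\Box_s G)$. Because $B$ is closed and cocompact in $G$, and $B$ (being polycyclic) itself has a standard box space, Proposition~\ref{asdim}(iii) gives the equality ${\rm asdim}(\Box_s G)={\rm asdim}(\Box_s B)$. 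Chaining these, it suffices to show ${\rm asdim}(\Box_s B)<\infty$.

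The crux is thus a purely discrete statement: for a finitely generated torsion-free nilpotent group $B$, the standard box space $\Box_s B$ has finite asymptotic dimension. Here the dominating regular $RC$-app of $B$ is cofinal in the family of finite-index normal subgroups, so $\Box_s B$ coincides with the classical box space of a finitely generated nilpotent group. Such groups have polynomial growth, hence finite asymptotic dimension bounded by the Hirsch length $h(B)$ (cf. \cite{bcl},~\cite{ds}), and their box spaces are likewise of finite asymptotic dimension; concretely, the Basic Lemma~\ref{basic} reduces the claim to producing a system of decay that is uniform across the finite quotients $B/B_n$, exactly as in the discrete box space theory of \cite{swz}. Combining everything gives ${\rm asdim}(G)\leq{\rm asdim}(\Box_s B)<\infty$, which is the assertion.

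The step I expect to be the main obstacle is the last one, namely controlling ${\rm asdim}(\Box_s B)$: one must ensure that passing to the infinitely many finite quotients does not blow up the dimension. Unlike the bound ${\rm asdim}(B)\leq h(B)$ for the group itself, finiteness of the box space dimension is a genuinely uniform, global statement about the coarse disjoint union, requiring for each scale $R$ a single index $n$ together with a bounded-geometry cover of $G$ by mutually disjoint $B_n$-invariant sets of controlled multiplicity and Lebesgue number, uniformly over all cosets. The torsion-freeness and polynomial growth of $B$ are precisely what make this uniformity attainable, and verifying the uniform system of decay through Lemma~\ref{basic} is the technical heart of the argument.
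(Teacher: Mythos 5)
Your reduction is exactly the paper's: Lemma~\ref{tfree} supplies the finitely generated torsion-free nilpotent cocompact subgroup, and Proposition~\ref{asdim} (parts (i) and (iii)) transports the problem to the discrete subgroup via the standard box space, so that everything hinges on showing ${\rm asdim}(\Box_s B)<\infty$. The difficulty is that you stop precisely at that point: you assert that finitely generated nilpotent groups ``have polynomial growth, hence finite asymptotic dimension'' and that ``their box spaces are likewise of finite asymptotic dimension,'' and then explicitly defer the verification as ``the technical heart of the argument.'' This is a genuine gap, not a routine one. Finiteness of ${\rm asdim}(\Box_s B)$ is strictly stronger than finiteness of ${\rm asdim}(B)$ and does not follow from polynomial growth by any soft argument --- box spaces of groups with ${\rm asdim}=1$ can have infinite asymptotic dimension (expander-type phenomena), so the uniform system of decay demanded by Lemma~\ref{basic} must actually be produced or quoted from a precise reference, and your appeal to ``the discrete box space theory of \cite{swz}'' names no theorem that does this for a general finitely generated torsion-free nilpotent group.

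The paper closes exactly this gap with a concrete device you are missing: by Jennings' theorem \cite[Theorem 5.2]{j}, the torsion-free finitely generated nilpotent group $A$ embeds into the integral unitriangular group $U_n(\mathbb Z)$ for some $n\geq 2$, for which \cite[Theorem 4.9]{swz} gives the explicit value ${\rm asdim}\bigl(\Box_s U_n(\mathbb Z)\bigr)=n(n-1)/2$; monotonicity of the box space dimension under passage to subgroups (Proposition~\ref{asdim}(ii)) then bounds ${\rm asdim}(\Box_s A)$, and Proposition~\ref{asdim}(i),(iii) finish as in your outline. So your scaffolding is right, but to complete the proof you should either insert the Jennings embedding step or cite a specific theorem computing the box space dimension of finitely generated nilpotent groups; as written, the decisive estimate is announced rather than proved.
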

\begin{proof}
	Let $G$ be compactly generated and TVN. Choose a finitely generated  torsion-free nilpotent cocompact subgroup $A$ in $G$ by Lemma \ref{tfree}. By \cite[Theorem 5.2]{j}, there is an embedding of $A$ as a subgroup into the unitary group $U_n(\mathbb Z)$, for some $n\geq 2$. But asdim$\big(\Box_s U_n(\mathbb Z)\big)=n(n-1)/2$, for $n\geq 2$ \cite[Theorem 4.9]{swz}. Hence $A$ has finite asymptotic dimension. Now, since $A$ is a cocompact subgroup, $G$ has finite asymptotic dimension by Proposition \ref{asdim}(iii). 
\end{proof}

Note that in Theorems \ref{main}, \ref{main2}, one cannot guarantee the existence of an {\it open} nilpotent  cocompact subgroup (such a cocompact subgroup exists but we only know that it is closed). Thus in order to use Proposition \ref{asdim}, we need to use the fact that these groups are either compact or algebraically residually compact.  

In Section \ref{ea} we prove more general results of this nature (Theorem \ref{main3}). For that, we would need the following result.

\vspace{.3cm}	
\begin{proposition}  \label{va}
If $G$ is a compactly generated, abelian-by-compact, locally compact group then asdim$(G)=h(G)$.

\end{proposition}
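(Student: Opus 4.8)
The plan is to push both invariants down to a lattice $\mathbb{Z}^n$ sitting cocompactly inside $G$, where each is classical and the two visibly agree. First I would fix a \emph{closed} abelian cocompact subgroup: by hypothesis $G$ has an abelian cocompact subgroup, and replacing it by its closure (still abelian in a Hausdorff group, and still cocompact, since the closure of a cocompact subgroup is cocompact) yields a closed abelian cocompact $A\leq G$. Because $G$ is compactly generated and $A$ is closed and cocompact, $A$ is compactly generated by \cite[2.C.8(3)]{ch}, hence is a compactly generated, locally compact, abelian group. By \cite[Proposition 33]{mo} (as used in the proof of Lemma \ref{pcbc}), $A$ is polycyclic-by-compact; being abelian, its polycyclic cocompact subgroup is a finitely generated abelian group, whose torsion-free part $\Lambda\cong\mathbb{Z}^n$ is again cocompact (the torsion quotient being finite). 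Cocompactness is transitive (if $G=K_1A$ and $A=K_2\Lambda$ with $K_1,K_2$ compact, then $G=K_1K_2\Lambda$), so $\Lambda$ is a closed cocompact subgroup of $G$; in particular $G$ is polycyclic-by-compact.

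For the Hirsch length I would simply invoke Lemma \ref{sgp}: since $\Lambda$ is a closed cocompact subgroup of the polycyclic-by-compact group $G$, we have $h(G)=h(\Lambda)$, and $h(\mathbb{Z}^n)=n$ by the classical discrete computation. Hence $h(G)=n$. (Note that this direction of Lemma \ref{sgp} needs no normality of $\Lambda$, which is essential here.)

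For the asymptotic dimension the key point is that a closed cocompact subgroup is \emph{coarsely equivalent} to the ambient group. Fixing a proper right-invariant adapted metric $d$ on $G$, the relation $G=K_1K_2\Lambda$ shows that $\Lambda$ is $R$-dense for $R=\sup_{k\in K_1K_2}d(k,1)$, so the inclusion $\Lambda\hookrightarrow G$, which is an isometric embedding, is in fact a coarse equivalence. Since asymptotic dimension is a coarse invariant, ${\rm asdim}(G)={\rm asdim}(\Lambda)={\rm asdim}(\mathbb{Z}^n)=n$, the last equality being Gromov's value ${\rm asdim}(\mathbb{Z}^n)=n$. Combining the two computations gives ${\rm asdim}(G)=n=h(G)$, as claimed. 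The main obstacle, and the step I would write out with care, is precisely this transfer of ${\rm asdim}$ from the lattice to $G$: one must check that the subspace metric inherited by $\Lambda$ from $G$ is an adapted metric on $\Lambda$, hence coarsely equivalent to the intrinsic word metric used to compute ${\rm asdim}(\mathbb{Z}^n)=n$ (by \cite[Corollary 4.A.6(2)]{ch}), and that $R$-density genuinely upgrades the isometric embedding $\Lambda\hookrightarrow G$ to a coarse equivalence rather than a mere coarse embedding (which alone would only give the inequality ${\rm asdim}(\Lambda)\leq{\rm asdim}(G)$).
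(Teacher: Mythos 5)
Your proof is correct and follows essentially the same route as the paper's: both reduce to a cocompact copy of $\mathbb{Z}^n$ supplied by \cite[Proposition 33]{mo} and then transfer both invariants across the cocompact inclusion. The only cosmetic differences are that the paper handles the abelian case first and invokes Lemma \ref{exact} and the Hirsch formula (Lemma \ref{hf}) for the two transfers, where you use a direct coarse-equivalence argument and Lemma \ref{sgp}.
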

\begin{proof}
	First assume that $G$ is abelian. By \cite[Proposition 33]{mo}, there is a non negative integer $n$, and a cocompact subgroup $A$ of $G$, topologically isomorphic to $\mathbb Z^n$. Since $A$ is cocompact in $G$, we have asdim$(G)$=asdim$(A)=n$, by Lemma \ref{exact}. On the other hand, by definition, $h(G/A)=0$, and so $h(G)=h(A)=n$, by the Hirsch formula in the cocompact case.
	Next consider the case where $G$ is abelian-by-compact. Then exactly by the same argument as above, $G$ has the same asymptotic dimension and Hirsch length as its cocompact abelian subgroup (which could clearly  assumed to be closed and so locally compact) and the result follow from the previous case.	
\end{proof}

This plus Lemma \ref{exact} (proved in Section \ref{ea}) shows the following more general result. 

\vspace{.3cm}
\begin{corollary}  \label{va2}
	If $G$ is a compactly generated, solvable-by-compact, locally compact group then ${\rm asdim}(G)\leq h(G)$.	
\end{corollary}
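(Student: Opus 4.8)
The plan is to prove Corollary \ref{va2} by induction on the Hirsch length $h(G)$, using the solvable-by-compact structure to peel off one abelian-by-compact layer at a time and invoking Proposition \ref{va} as the base case. First I would reduce to the case where $G$ is genuinely solvable-by-compact with a cocompact closed solvable subgroup $S$; by Proposition \ref{asdim}(iii) the asymptotic dimension is unchanged when passing to a cocompact subgroup, and by the Hirsch formula in the cocompact case (Lemma \ref{hf}) so is the Hirsch length, so it suffices to treat the solvable closed cocompact subgroup itself. Then I would run an induction along the derived series of $S$: the bottom (or top, depending on the direction of the series) nontrivial term gives a closed abelian normal subgroup, and the quotient is again solvable-by-compact of strictly smaller derived length.

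The heart of the argument is the inductive step, where I would take a closed abelian normal subgroup $A \trianglelefteq G$ with $G/A$ solvable-by-compact and combine three ingredients. Proposition \ref{va} handles the abelian-by-compact piece, giving $\mathrm{asdim}(A) = h(A)$ (or the abelian-by-compact layer). The Hirsch formula from Lemma \ref{hf} gives $h(G) = h(A) + h(G/A)$ when $A$ is suitably placed (here $A$ being a closed normal subgroup yields the inequality $h(G) \geq h(A) + h(G/A)$, which is the direction I actually need combined with the reverse coming from cocompactness considerations). The induction hypothesis applied to $G/A$ gives $\mathrm{asdim}(G/A) \leq h(G/A)$. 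The remaining structural tool is the extension-type inequality for asymptotic dimension, and this is precisely where I would invoke Lemma \ref{exact} (stated to be proved in Section \ref{ea}), which should assert something like $\mathrm{asdim}(G) \leq \mathrm{asdim}(A) + \mathrm{asdim}(G/A)$ for an appropriate extension of a closed normal subgroup. Chaining these together yields
\[
\mathrm{asdim}(G) \leq \mathrm{asdim}(A) + \mathrm{asdim}(G/A) \leq h(A) + h(G/A) \leq h(G),
\]
which is the desired inequality.

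The main obstacle I anticipate is justifying the asymptotic-dimension extension inequality in the locally compact, non-discrete setting, since the classical Hurewicz-type bound $\mathrm{asdim}(G) \leq \mathrm{asdim}(N) + \mathrm{asdim}(G/N)$ relies on the quotient map being a coarse fibration with controlled fibers. In the topological realm the quotient metric on $G/A$ need not be $G$-invariant (as the excerpt itself cautions in its discussion following Definition \ref{lcc}), and the action of $G$ on the coset space is generally not proper unless $A$ is compact or cocompact, so the fibers $A$-cosets must be shown to sit inside $G$ with uniformly controlled geometry. I would lean entirely on Lemma \ref{exact} to encapsulate this difficulty, treating it as a black box exactly as the corollary statement invites, and the only real care needed in the writeup is ensuring the hypotheses of Lemma \ref{exact}, Proposition \ref{va}, and Lemma \ref{hf} are simultaneously met by the chosen abelian normal layer $A$. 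A secondary subtlety is guaranteeing at each inductive stage that the relevant subgroup is \emph{closed} and hence locally compact, so that the quotient is again a legitimate locally compact solvable-by-compact group to which the induction hypothesis applies; this is routine given that closures of cocompact subgroups stay cocompact and that terms of the derived series of a closed solvable subgroup can be taken closed.
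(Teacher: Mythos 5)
Your proposal follows the same route the paper itself indicates: Corollary \ref{va2} is derived there in a single sentence from Proposition \ref{va} together with Lemma \ref{exact}, and your plan --- reduce to a closed cocompact solvable subgroup, induct on derived length, and chain $\mathrm{asdim}(G)\le \mathrm{asdim}(A)+\mathrm{asdim}(G/A)\le h(A)+h(G/A)\le h(G)$ --- is the natural way to carry that out. The reduction to the cocompact solvable subgroup and the use of the inequality $h(G)\ge h(A)+h(G/A)$ from Lemma \ref{hf} (which holds for arbitrary closed normal subgroups, so that direction needs no cocompactness) are both fine.

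The step you defer to Lemma \ref{exact} as a black box, however, is exactly where the argument has a gap, and the hypothesis check you describe as ``the only real care needed'' in fact fails. Lemma \ref{exact} establishes $\mathrm{asdim}(G)\le\mathrm{asdim}(N)+\mathrm{asdim}(G/N)$ only when $N$ is compact or cocompact, whereas the closed abelian layer $A$ obtained as the closure of the last nontrivial term of the derived series of $S$ is in general neither: already for the discrete polycyclic group $\mathbb Z^2\rtimes\mathbb Z$ the derived subgroup is infinite and of infinite index. A Hurewicz-type extension inequality for arbitrary closed normal subgroups of compactly generated locally compact groups is true, but it is not what Lemma \ref{exact} proves, so invoking it here is an application outside its stated hypotheses. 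A second, more minor issue is that Proposition \ref{va} requires the abelian group to be compactly generated, and the closure of a derived subgroup of a compactly generated solvable group need not be (the commutator subgroup of the lamplighter group is not finitely generated); this one is patchable, since the paper defines both $\mathrm{asdim}$ and $h$ of non-compactly-generated groups as suprema over compactly generated closed subgroups, to which Proposition \ref{va} does apply. To be fair, the paper's own one-line derivation is no more detailed than yours and inherits the same difficulty; but as written, your inductive step does not go through on the strength of the cited lemmas alone, and the extension inequality for a general closed normal abelian subgroup is the point that actually needs proof.
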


\vspace{.3cm}
We  have the following sharp estimate for case of polycyclic-by-compact groups.

\vspace{.3cm}
 \begin{proposition}  \label{pcc}
 	If $G$ is polycyclic-by-compact,  ${\rm asdim}(G)= h(G)$.	
 \end{proposition}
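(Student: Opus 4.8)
The plan is to prove both inequalities and thereby obtain equality. The upper bound ${\rm asdim}(G)\le h(G)$ is immediate: a polycyclic-by-compact group is compactly generated and solvable-by-compact (polycyclic groups are solvable), so Corollary \ref{va2} applies. The substance of the proposition is therefore the reverse inequality ${\rm asdim}(G)\ge h(G)$, and I would in fact obtain the full equality directly by reducing to the classical discrete case.

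First I would dispose of the trivial case: if $G$ is compact then $h(G)=0={\rm asdim}(G)$, so assume $G$ is noncompact. Following the construction inside the proof of Lemma \ref{tf} (which applies Lemma \ref{polycycl} to $G$), I would choose a closed cocompact characteristic subgroup $H\trianglelefteq G$ that is poly-$\{$infinite cyclic$\}$; such an $H$ is a finitely generated, torsion-free, discrete polycyclic group. Two facts then pin down the two invariants of $G$ in terms of $H$. Since $H$ is cocompact, Lemma \ref{sgp} gives $h(H)=h(G)$, while the cocompactness invariance of asymptotic dimension (Lemma \ref{exact}, as already used in Proposition \ref{va}) gives ${\rm asdim}(G)={\rm asdim}(H)$. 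Here one should note that the subspace metric $H$ inherits from a plig metric on $G$ is a proper, left-$H$-invariant metric, hence coarsely equivalent to any word metric on the finitely generated group $H$; thus ${\rm asdim}(H)$ is unambiguously the intrinsic asymptotic dimension of the discrete group $H$.

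The final step invokes the discrete theory: for a (virtually) polycyclic group the asymptotic dimension equals the Hirsch length \cite{ds} (see also \cite{bcl}). Applying this to $H$ yields ${\rm asdim}(H)=h(H)$, and combining the preceding identifications gives ${\rm asdim}(G)={\rm asdim}(H)=h(H)=h(G)$, as required.

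I expect the lower bound to be the only genuine obstacle, and the key decision is how to secure it. A direct induction on $h(G)$ — splitting off the normal abelian subgroup $A\cong\mathbb{Z}^n$ produced by Lemma \ref{tf} and passing to the quotients $G/A^r$ by its finite-index characteristic subgroups of $r$-th powers — runs into the well-known difficulty that asymptotic dimension satisfies only an upper (Hurewicz-type) bound under extensions, so no clean lower bound for ${\rm asdim}(G)$ in terms of ${\rm asdim}(G/A^r)$ is available from the tools at hand (Lemma \ref{exact} is itself only a subadditivity/cocompactness statement). Reducing to a cocompact discrete polycyclic subgroup sidesteps this entirely, since there the sharp equality is already known; the only points requiring care are that the cocompact subgroup can be taken genuinely discrete and polycyclic (guaranteed by Lemma \ref{polycycl}) and that passing to it changes neither invariant (Lemma \ref{sgp} for $h$, and coarse equivalence for ${\rm asdim}$).
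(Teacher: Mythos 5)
Your proof is correct and follows essentially the same route as the paper: the paper's own (very terse) argument is exactly to note that both ${\rm asdim}$ and $h$ are stable under passing to a cocompact subgroup, reduce to a discrete polycyclic group, and quote the known equality from \cite{ds}. You have simply supplied the supporting details (Lemma \ref{polycycl} for the existence of a discrete poly-$\{$infinite cyclic$\}$ cocompact subgroup, Lemma \ref{sgp} and coarse equivalence for the stability of the two invariants) that the paper leaves implicit.
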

\begin{proof}
	Both the asymptotic dimension and  Hirsch length are
	stable under passing to a cocompact subgroup, so we may assume that $G$ is polycyclic, but the result in this case is already known \cite[Theorem 3.5]{ds}.	
\end{proof}

\section{Elementary amenable groups} \label{ea}

A discrete group is elementary amenable (EA) if it can be constructed from finite and abelian groups through the processes of direct unions and extensions. The class was first considered by von Neumann in the study of the Banach-Tarski paradox \cite{v}. Although the class [EA] already contains non discrete groups (including all locally compact abelian ones), it is quite natural to extend the notion by adding some more topological flavor. We could not trace the following natural topological version in the literature: the class of {\it topologically elementary amenable} (TEA) groups is the smallest family of locally compact  Hausdorff groups 
containing all compact and all locally compact abelian Hausdorff groups, and is closed under 
taking closed subgroups, quotients by closed normal subgroups, topological group extensions and increasing unions. Note that starting with a locally compact Hausdorff group and a closed (and normal if needed) subgroup, we end up a locally compact Hausdorff group in the first three cases, and the same holds in the fourth one if we take care of topology as follows: given an increasing family of locally compact Hausdorff groups indexed by a net such that each group sits in a group with larger index as an open subgroup, topologize the union by letting a subset of the union to be open iff its intersection with each of the groups in the family is open. This way we get a locally compact Hausdorff group again. 

Let $\mathcal C, \mathcal D$ be classes of locally compact groups (which are also assumed to be Hausdorff from now on, without further notice). Then the classes $L\mathcal C$ and $\mathcal C\mathcal D$ are the class of ``locally $\mathcal C$'' groups (that is, locally compact groups with all compactly generated locally compact subgroups in $\mathcal C$) and that of  $\mathcal C$-by-$\mathcal D$ groups (that is, groups with a compact or cocompact normal subgroup in  $\mathcal C$ whose quotient group is in  $\mathcal D$). Then the class [TEA] could be constructed by transfinite induction starting with $\mathfrak X_0=\{1\}$, $\mathfrak X_1$ be the class of abelian-by-compact groups, and $\mathfrak X_{\alpha+1}:=(L\mathfrak X_\alpha)\mathfrak X_1$, and for a limit ordinal $\beta$, $\mathfrak X_\beta:=\cup_{\alpha<\beta}\mathfrak X_\alpha$. Then adapting an argument of  Hillman and Linnel in \cite{hl}, we have [TEA] is nothing but the union of all $\mathfrak X_\alpha$'s. Now, following the idea of Hillman and Linnel, we define the Hirsch length on the class  $\mathfrak X_1$  of abelian-by-compact groups as in the previous section\footnote[1]{We do not follow the footsteps of \cite{fw} here, where they define the Hirsch length of an abelian-by-finite group $A$ as the module dimension dim$_{\mathbb Q}(A\otimes \mathbb Q)$, as in \cite{ds}. This causes no problem as long as $A$ is countable, but even for an innocent looking group like $\mathbb R$, the module dimension is $\infty$, where as the Hirsch length--based on our definition in previous section--is 1 (c.f. \cite[Remark 3]{ds}). The same of hesitation is needed when one defines different notions of asymptotic dimension. For instance the asymptotic dimension of $\mathbb R$ with respect to the coarse structure consisting of subsets $E\subseteq \mathbb R\times \mathbb R$ with $\{x-y: (x,y)\in E\}$ finite, is $\infty$ \cite[Remark 2]{ds}, where as it is 1, if we replace ``finite'' by ``relatively compact'' in the above coarse structure. The main idea of this paper is that that such pathologies disappears if one take the topology into account when working with classes such as solvable-by-compact groups.}, on groups in $G\in L\mathfrak X_\alpha$ by $h(G):=\sup\{h(H): H\in \mathfrak X_\alpha \}$, and on groups $G\in\mathfrak X_{\alpha+1}$ having a compact or cocompact normal subgroup $N\in \mathfrak X_{\alpha}$ with $G/N\in\mathfrak X_{1}$, by $h(G)=h(N)+h(G/N)$. Again the inductive argument of \cite[Page 238]{hl} could be adapted to show that the Hirsch length is well-defined for  $G$ in [TEA], and satisfies the natural properties: $h(H)\leq h(G)$, for each closed subgroup $H$, $h(G)=h(N)+h(G/N)$, for each compact or cocompact normal subgroup $N$, and $h(G)=\sup\{h(H): H\leq G \ {\rm closed \ and \ compactly \ generated} \}$ (c.f. \cite[Theorem 1]{h}). This plus the Hirsch formula in the previous section show that the notion defined here coincides with the notion defined in previous section for polycyclic-by-compact groups.  

The class [EA] is known to include all virtually solvable groups (and so all virtually nilpotent and all polycyclic-by-finite groups), but as we assume that [TEA] is stable only under taking ``closed’’ subgroups, we would not have all topologically virtually nilpotent, or even all polycyclic-by-compact groups in the class [TEA]. The choice of assuming stability under taking arbitrary subgroups has its own price to pay: the class [TEA] would then include the class of maximally almost periodic groups, and in particular the non amenable free groups would be in [TEA] (c.f., \cite[Example 12]{pa}).  

Let us compare and contrast the class [TEA] with other known classes of topological groups. Our reference for the properties of various classes discussed here is \cite{pa}, from which we borrow class notations as well. [TEA] clearly contains the classes [Z] of {\it central} groups (groups whose quotient over center is compact) which is included in the larger class of extensions of abelian groups by compact groups. It does not contains the class [MAP] of {\it maximally almost periodic} groups (groups with enough finite dimensional irreducible representations to separate the points, or equivalently, enough continuous almost periodic functions to separate the points), as these are subgroup of compact groups and so contains many non amenable groups (this justifies our assumption that the class [TEA] is stable only under taking closed subgroups). [MAP] includes the class [Moore] of Moore groups (groups whose irreducible representations are finite dimensional) by Gelfand-Raikov theorem. Note that a Lie group belongs to [Moore] if and only if it is a finite extension of a central group, so [TEA] contains Lie groups in the class [Moore]. For compactly generated groups, [MAP] is included in [SIN] ({\it small invariant neighborhood} groups, so the latter is not included in [TEA], indeed [SIN] includes all discrete groups (more generally, [SIN]-groups are discrete extensions of direct products of vector groups and compact groups (c.f., \cite[Theorem 2.13]{gm}). However connected [SIN] groups are in [TEA] (as they are in [Z], by Freudenthal-Weil theorem), though there are connected groups in [TEA] included neither in [MAP] nor in [SIN] (c.f., \cite[Example 8]{pa}) as well as compactly generated totally disconnected groups in [TEA] included neither in [MAP] nor in [SIN] (c.f., \cite[Example 9]{pa}). More generally, connected [IN]-groups are extension of compact groups by vector groups, and so are in [TEA] \cite[Theorem 2.9, Corollary 2.8]{gm}. A less trivial inclusion follows by a result of Grosser and Moskowitz \cite[Proposition 4.5]{gm}, stating that a compactly generated [FIA]$^{-}$ group ({\it topologically finite inner automorphism group}, that is a group with relatively compact inner automorphisms group) is an extension of a direct product of some vector group $\mathbb R^n$ with a compact group by some $\mathbb Z^d$, showing that [TEA] contains compactly generated [FIA]$^{-}$ groups. Since [SIN]$\cap$[FD]$^{-}\subseteq$ [FIA]$^{-}$ \cite[Theorem 4.6]{gm}, same is true for compactly generated [SIN]-groups which are in [FD]$^{-}$ ({\it topologically finite derived subgroup}, that is a groups with relatively compact commutator or derived subgroup). This class is quite interesting as it contains non Type I groups, showing that [TEA] is not included in [Type I]: there is a central extension of $\mathbb T$ by $\mathbb Z^2$ with derived subgroup $\mathbb T$, which is a compactly generated group in [SIN]$\cap$[FD]$^{-}$, with a non Type I, $\mathbb Z^2$ multiplier (and so non Type I) (c.f., \cite[Example 7]{pa}; also there are compactly generated solvable non Type I groups, c.f., \cite[Example 17]{pa}; and connected solvable non Type I groups, c.f., \cite[Example 19]{pa}). Another structural result of Grosser and Moskowitz \cite[Theorem 3.16]{gm}, states that for a group in  [FC]$^{-}$ ({\it topologically finite conjugacy class} groups, that is groups relatively compact conjugacy classes) the quotient by the intersection of all compact normal subgroups is a direct product of a
vector group and a discrete torsion free abelian group. This shows that 
[FC]$^{-}$ is included in [TEA] (note that for compactly
generated groups, [FC]$^{-}$= [FD]$^{-}$ (c.f., \cite[Theorem 3.20]{gm}), however there is a semidirect product of $\mathbb R$ by $\mathbb Z^2$ (which is then in [TEA]) not included in [FC]$^{-}$ (c.f., \cite[Example 11]{pa}).

In this section we adapt the argument in \cite{fw} to show that for  second countable topologically elementary amenable groups, the asymptotic dimension of any box space is bounded above by the Hirsch length. Since we need a plig metric (see section \ref{Box}) in order to make sense of asymptotic dimension, one first needs to restrict   to second countable locally compact groups. Such a group is then automatically $\sigma$-compact, and so compactly generated by \cite[2.C.6]{ch}. For the general case, we define asdim$(G)$ as the supremum of asdim$(K)$, over all compactly generated  subgroups  $K\leq G$ (or equivalently, over all compactly generated closed  subgroups by \cite[2.C.2(5)]{ch}). This is then the same as the asymptotic dimension of $G$ as a topological group endowed with its canonical left coarse structure \cite[Theorem 2.8]{bcl}.

A locally compact group $G$ is called {\it locally elliptic} if every compact subset of $G$ is contained in a compact open subgroup. These are exactly those locally compact groups with zero asymptotic dimension \cite[4.D.4]{ch}.  

\vspace{.3cm}
\begin{lemma}  \label{exact}
Let $G$ be a locally compact, compactly generated group. For a short exact sequence $$1\rightarrow N\rightarrow G\rightarrow G/N\rightarrow 1$$ with $N$ compact or cocompact, we have ${\rm asdim}(G)\leq {\rm asdim}(N)+{\rm asdim}(G/N)$, and equality holds if the exact sequence is split and $N$ is locally elliptic. 
\end{lemma}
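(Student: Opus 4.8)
The plan is to split the statement into the inequality (for $N$ compact or cocompact) and the equality clause (for split extensions with $N$ locally elliptic), and to reduce each to standard coarse-geometric facts about a plig metric $d$ on $G$ coming from a length function $\ell$: asymptotic dimension is invariant under coarse equivalence and monotone under coarse embeddings, hence under passing to closed subgroups with the restricted plig metric, and local ellipticity is equivalent to vanishing asymptotic dimension by \cite[4.D.4]{ch}. I would first dispose of the compact case. Since $\ell$ is bounded on the compact set $N$, the fibres $gN$ have diameter at most $\mathrm{diam}(N)$, so the quotient map $q\colon G\to G/N$ is $1$-Lipschitz and surjective, and a one-line triangle-inequality estimate shows $d(x,y)\le R+2\,\mathrm{diam}(N)$ whenever the quotient distance of $q(x),q(y)$ is at most $R$. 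Thus $q$ is a coarse equivalence, $\mathrm{asdim}(G)=\mathrm{asdim}(G/N)$, and as $\mathrm{asdim}(N)=0$ the asserted (in fact) equality holds.

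For the cocompact case the quotient $G/N$ is compact, so $\mathrm{asdim}(G/N)=0$ and it suffices to prove $\mathrm{asdim}(G)\le\mathrm{asdim}(N)$. Since $N$ is normal and cocompact I may write $G=NK$ with $K$ compact; for the left-invariant $d$ one computes $d(nk,n)=\ell(k^{-1})\le\sup_{k\in K}\ell(k)<\infty$, so $N$ is coarsely dense in $G$. As the inclusion $N\hookrightarrow G$ is an isometric embedding for the restricted metric, it is a coarse equivalence and $\mathrm{asdim}(G)=\mathrm{asdim}(N)$, giving the inequality (again an equality). This is precisely the form invoked in Proposition \ref{va}.

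Finally, for the equality clause I assume the sequence splits and $N$ is locally elliptic, so $\mathrm{asdim}(N)=0$ and the claim becomes $\mathrm{asdim}(G)=\mathrm{asdim}(G/N)$. The splitting enters in the lower bound: a continuous homomorphic section $s\colon G/N\to G$ realizes $G/N$ as a closed subgroup of $G$, and since any two plig metrics are coarsely equivalent, monotonicity gives $\mathrm{asdim}(G/N)=\mathrm{asdim}(s(G/N))\le\mathrm{asdim}(G)$. For the upper bound I would apply a Hurewicz-type mapping theorem for asymptotic dimension to the projection $p\colon G\to G/N$: each point-preimage is a coset of $N$, isometric to $N$ and hence of asymptotic dimension $0$, and I would verify that the thickened preimages $p^{-1}(B_R(\,\cdot\,))$ have asymptotic dimension $0$ uniformly in the basepoint, using local ellipticity of $N$ and the section to organize the coarse geometry over balls of $G/N$. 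This yields $\mathrm{asdim}(G)\le\mathrm{asdim}(G/N)$, and combining the two bounds completes the equality.

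The main obstacle is this last upper bound. As the paper already notes for general coset spaces, the action of $G$ on $G/N$ need not be proper, so $p$ is not metrically proper and no coarse-equivalence statement can be quoted directly; the delicate point is to extract the \emph{uniform} asymptotic-dimension-zero control on the thickened fibres from local ellipticity alone, which is exactly the role played by the splitting. Everything else is a routine transcription of the discrete extension theorem for asymptotic dimension into the plig-metric setting.
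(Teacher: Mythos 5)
Your proof of the inequality is correct but takes a genuinely more elementary route than the paper. The paper handles both cases at once: it observes that the quotient map $q\colon G\to G/N$ has coarse fibres coarsely equivalent to $N$, notes that $G$ is coarsely geodesic by \cite[4.B.9]{ch}, and invokes the Hurewicz-type fibering theorem \cite[Theorem 29]{bd} to get ${\rm asdim}(G)\leq{\rm asdim}(N)+{\rm asdim}(G/N)$ directly. You instead split into the two cases and exhibit an explicit coarse equivalence in each: $q$ itself when $N$ is compact (bounded fibres), and the inclusion $N\hookrightarrow G$ when $N$ is cocompact (coarse density of $N$). This avoids the fibering theorem entirely and in fact yields the stronger conclusion that the inequality is an equality in each case separately, at the cost of not generalizing beyond compact or cocompact $N$; the paper's argument is the one that would survive a weakening of that hypothesis (the hypothesis is used there only to guarantee, via \cite[2.C.8]{ch}, that $N$ and $G/N$ are compactly generated). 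Your lower bound for the equality clause --- the splitting realizes $G/N$ as a closed subgroup, and asymptotic dimension is monotone under closed subgroups with the restricted plig metric --- is exactly the paper's argument.

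Where you go astray is in the final paragraph: the ``main obstacle'' you identify is not an obstacle at all. The upper bound ${\rm asdim}(G)\leq{\rm asdim}(G/N)$ in the equality clause is an immediate consequence of the inequality you have already proved, since local ellipticity of $N$ gives ${\rm asdim}(N)=0$ by \cite[4.D.4]{ch}; indeed your own case analysis already shows ${\rm asdim}(G)={\rm asdim}(G/N)$ when $N$ is compact and ${\rm asdim}(G)={\rm asdim}(N)=0={\rm asdim}(G/N)$ when $N$ is cocompact and locally elliptic. The fresh Hurewicz-type argument with uniform control on thickened fibres that you propose (and leave unverified) is therefore redundant, and the non-properness of the $G$-action on $G/N$ that worries you never enters. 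Had the equality clause been meant for arbitrary closed normal $N$, your concern would be substantive --- but then the first part of the lemma would not apply either, and the paper, like you, only ever uses the inequality established under the compact-or-cocompact hypothesis.
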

\begin{proof}
Since $G$ is compactly generated,  so are $N$ and $G/N$ by \cite[2.C.8]{ch}. Take any plig metric on $G$ (no matter which one, as they are all coarsely equivalent) and observe that the quotient map $q: G\to G/N$ is an isometry (and so Lipschitz) with respect to the canonical quotient metric on $G/N$, and that the coarse fibres are translates of a neighborhood of $N$ (and so coarsely equivalent to $N$). Since $G$ is coarsely equivalent to a geodesic metric space \cite[4.B.9]{ch}, the desired inequality  follows from \cite[Theorem 29]{bd}. 
If $N$ is  locally elliptic, then  ${\rm asdim}(N)=0$ and ${\rm asdim}(G)\leq {\rm asdim}(G/N)$. On the other hand, since the exact sequence is split (in the category of topological groups), $G/N$ is isomorphic (as a topological group) to a closed subgroup of $G$, and so ${\rm asdim}(G)\geq {\rm asdim}(G/N)$, and we get the equality.  	
\end{proof}

The first main result of this section reads as follows.

\vspace{.3cm}
\begin{theorem}  \label{main3}
	Let $G$ be topologically elementary amenable (TEA), then  asdim$(G)\leq h(G)$. In particular groups in {\rm [}TEA{\rm ]} with finite Hirsch length have finite asymptotic dimension.
\end{theorem}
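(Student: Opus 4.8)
The plan is to run a transfinite induction on the ordinal $\alpha$ for which $G\in\mathfrak{X}_\alpha$, after first reducing to the compactly generated case. Since ${\rm asdim}(G)$ is defined as the supremum of ${\rm asdim}(K)$ over compactly generated closed subgroups $K\le G$, and the Hirsch length likewise satisfies $h(G)=\sup\{h(K):K\le G\ \text{closed and compactly generated}\}$, and since each such $K$ is again in [TEA] (the class being closed under closed subgroups), it suffices to establish ${\rm asdim}(K)\le h(K)$ for every compactly generated $K$; the general inequality then follows by taking suprema on both sides. So I would assume from the outset that $G$ is compactly generated.

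For the base of the induction, $\mathfrak{X}_0=\{1\}$ is trivial, and for $G\in\mathfrak{X}_1$, i.e.\ $G$ abelian-by-compact and compactly generated, Proposition \ref{va} gives the sharper equality ${\rm asdim}(G)=h(G)$. If $\alpha$ is a limit ordinal then $\mathfrak{X}_\alpha=\bigcup_{\beta<\alpha}\mathfrak{X}_\beta$, so $G\in\mathfrak{X}_\beta$ for some $\beta<\alpha$ and the inductive hypothesis applies verbatim.

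The heart of the argument is the successor step. Suppose $G\in\mathfrak{X}_{\alpha+1}=(L\mathfrak{X}_\alpha)\mathfrak{X}_1$, so $G$ has a compact or cocompact closed normal subgroup $N\in L\mathfrak{X}_\alpha$ with $G/N\in\mathfrak{X}_1$. Because $G$ is compactly generated, so are both $N$ (it is compact, or cocompact and hence compactly generated by \cite[2.C.8]{ch}) and $G/N$ (as a quotient, again by \cite[2.C.8]{ch}). A compactly generated group lying in $L\mathfrak{X}_\alpha$ is one of its own compactly generated locally compact subgroups, hence $N\in\mathfrak{X}_\alpha$, and the inductive hypothesis yields ${\rm asdim}(N)\le h(N)$; while $G/N$, being compactly generated abelian-by-compact, satisfies ${\rm asdim}(G/N)=h(G/N)$ by Proposition \ref{va}. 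Applying the extension estimate of Lemma \ref{exact} to the short exact sequence $1\to N\to G\to G/N\to 1$ (legitimate since $N$ is compact or cocompact), together with the Hirsch formula $h(G)=h(N)+h(G/N)$, which holds precisely because $N$ is compact or cocompact, I obtain
$$
{\rm asdim}(G)\le {\rm asdim}(N)+{\rm asdim}(G/N)\le h(N)+h(G/N)=h(G).
$$
This closes the induction, and the ``in particular'' clause is immediate, since $h(G)<\infty$ forces ${\rm asdim}(G)\le h(G)<\infty$.

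The main obstacle I anticipate is bookkeeping rather than a single hard estimate: one must keep every group inside the compactly generated world so that both Proposition \ref{va} and Lemma \ref{exact} are applicable, and one must confirm that a compactly generated member of $L\mathfrak{X}_\alpha$ genuinely belongs to $\mathfrak{X}_\alpha$ so that the induction descends. This is exactly the point where the compact generation of the cocompact subgroup $N$ (via \cite[2.C.8]{ch}) is indispensable, and it is also why the reduction to compactly generated groups in the opening paragraph must match the \emph{sup} definitions of both invariants.
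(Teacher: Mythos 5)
Your proposal is correct and follows essentially the same route as the paper: a transfinite induction on the ordinal $\alpha$ with $G\in\mathfrak X_\alpha$, using Proposition \ref{va} for the base class $\mathfrak X_1$, Lemma \ref{exact} together with the Hirsch formula $h(G)=h(N)+h(G/N)$ for the successor step, and the supremum characterizations of both invariants over compactly generated closed subgroups to reduce to the compactly generated case. Your write-up is in fact slightly more careful than the paper's, e.g.\ in verifying that a compactly generated member of $L\mathfrak X_\alpha$ lies in $\mathfrak X_\alpha$ and in treating limit ordinals explicitly.
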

\begin{proof}
Assume first that $G$ is compactly generated. By Proposition \ref{va} we have the the desired inequality  (even equality) when $G$ is in class $\mathfrak X_1$. By lemma \ref{hf},
the Hirsch length satisfies $h(G) = h(N) + h(G/N)$, for any compact or cocompact normal subgroup $N$, whereas the asymptotic dimension satisfies  asdim$(G)\leq$ asdim$(N) +$ asdim $(G/N)$, by Lemma \ref{exact}. The
result now follows by a transfinite induction on the ordinal $\alpha$ where $G$ belongs to $\mathfrak X_\alpha$. 
In general, we could calculate both the asymptotic dimension and Hirsch length by taking supremum of the same invariant over all compactly generated closed subgroups. Since being (TEA) passes to closed subgroups, the result follows from the previous case.
\end{proof}

The second statement in the above theorem has a more structural version (Corollary \ref{sbc}), which extends \cite[Corollary 1]{hl}. First we need the following adaptation of \cite[Main Theorem]{hl}.  We use the notation  $\Lambda(G)$ to denote the maximal locally elliptic closed normal subgroup of $G$, which exist by Zorn lemma (but might be trivial), is unique \cite[4.D.7(7)]{ch}, and is called the {\it locally elliptic radical} of $G$. Recall that an action of $G$ on a topological space is called {\it effective} (or faithful) if for every element $g \neq e$ in $G$, there exists $x \in X$ with $g\cdot x \neq x$. 

 \vspace{.3cm}
\begin{lemma}  \label{scc}
	For each $G$ in {\rm [}TEA{\rm ]} of finite Hirsch length, $G/\Lambda(G)$ has a maximal solvable normal closed cocompact subgroup.
\end{lemma}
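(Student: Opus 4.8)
The plan is to transport the Hillman--Linnell structure theorem \cite[Main Theorem]{hl} to the topological setting, systematically replacing \emph{locally finite} by \emph{locally elliptic}, \emph{finite} by \emph{compact}, and \emph{virtually solvable} by \emph{solvable-by-compact}. First I would reduce to the case of trivial locally elliptic radical. Put $H:=G/\Lambda(G)$; then $H\in[\mathrm{TEA}]$ and $h(H)\le h(G)<\infty$, and moreover $\Lambda(H)=1$: if $M/\Lambda(G)$ were a nontrivial locally elliptic closed normal subgroup of $H$, then $M$ would be locally-elliptic-by-locally-elliptic, hence locally elliptic by \cite[4.D.6]{ch}, forcing $M\le\Lambda(G)$. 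Thus it suffices to prove that a group $H$ in $[\mathrm{TEA}]$ of finite Hirsch length with $\Lambda(H)=1$ has a maximal solvable closed cocompact normal subgroup, which I split into (a) existence of \emph{some} solvable closed cocompact normal subgroup, and (b) maximality.

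For (b) I would note that the locally elliptic radical $\Lambda(S)$ of any solvable closed normal $S\trianglelefteq H$ is characteristic in $S$, hence normal in $H$ and locally elliptic, hence $\Lambda(S)\le\Lambda(H)=1$; together with $h(H)<\infty$ this bounds the derived length of all solvable closed normal subgroups (exactly as in the discrete case \cite{hl}, where trivial locally finite radical plus finite Hirsch length bounds the derived length). Since the closure of a solvable subgroup is solvable --- the derived series commuting with closure gives $\overline{S}^{(k)}\subseteq\overline{S^{(k)}}$ --- the closure of the product of all solvable closed normal subgroups is again solvable, closed and normal, and is the unique maximal such subgroup $R$. Then $R$ is automatically cocompact once it is shown to contain one cocompact solvable closed normal subgroup, so (a) will finish the argument.

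For (a) I would induct on $h(H)$. If $h(H)=0$ then $H$ is locally elliptic (a compactly generated locally elliptic group is compact, and $h$ is the supremum over compactly generated closed subgroups), so $H=\Lambda(H)=1$. If $h(H)>0$ I first produce a nontrivial abelian closed \emph{characteristic} subgroup $A$ with $h(A)>0$, the $[\mathrm{TEA}]$-analogue of Lemma \ref{tf}: descending the hierarchy $\mathfrak X_\alpha$ to a compact-or-cocompact normal $N\in L\mathfrak X_\gamma$ with $H/N\in\mathfrak X_1$, the compact alternative forces $N\le\Lambda(H)=1$, so that $H\in\mathfrak X_1$ is abelian-by-compact with $A$ its abelian part, while the cocompact alternative yields a nontrivial $N$ with $\Lambda(N)=1$ (as $\Lambda(N)$ is characteristic in $N\trianglelefteq H$) to which the induction applies, characteristicity propagating $H$-normality up the hierarchy. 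Passing to $\bar H:=H/A$, the Hirsch formula (Lemma \ref{hf}) gives $h(\bar H)<h(H)$; letting $L$ be the preimage in $H$ of $\Lambda(\bar H)$ I again have $\Lambda(L)=1$ and $h(L)=h(A)<h(H)$, so by induction $L$ is solvable-by-compact with characteristic (hence $H$-normal) solvable radical $P$ of positive Hirsch length, whence $h(H/P)<h(H)$ and the induction applies once more to $H/P$.

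The hard part is the assembly: iterating the previous step presents a cocompact closed normal subgroup of $H$ as a tower of abelian, locally elliptic and solvable layers, which must be collapsed into a single solvable-by-compact structure. The genuine obstruction is a \emph{non-solvable} locally elliptic layer, since already $\Lambda(H/A)$ can be a compact simple group, as for $H=\mathbb R^{3}\rtimes SO(3)$ where $\Lambda(H/\mathbb R^{3})=SO(3)$ yet $\Lambda(H)=1$ and the answer is the cocompact abelian $R=\mathbb R^{3}$. The mechanism I expect to be the technical heart is a rigidity forced by finite Hirsch length: a locally elliptic group acting on a group of finite Hirsch length acts through a finite quotient, because its image in the essentially discrete automorphism group of a finite-Hirsch-length solvable radical is compact, hence finite, so its kernel is a compact normal subgroup which under $\Lambda(H)=1$ is trivial. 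This pushes every locally elliptic layer into the top compact quotient rather than into the solvable radical. Concretely I would establish, by the same induction, the closure lemma that within $[\mathrm{TEA}]$-groups of finite Hirsch length the class of solvable-by-compact groups is closed under extensions by compact groups and by closed normal solvable groups, using the cocompact second isomorphism theorem \cite[Theorem 5.3]{hr} (as in Lemma \ref{hf}) to push compact normal layers past solvable ones. Granting this, the subgroup $R$ of (b) is solvable, closed, normal, cocompact and maximal, completing the proof.
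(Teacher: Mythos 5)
Your overall template --- transporting the Hillman--Linnell induction on the Hirsch length, with locally elliptic replacing locally finite --- is the same as the paper's, and your reduction to $\Lambda(H)=1$ together with the split into ``existence of one solvable closed cocompact normal subgroup'' and ``maximality via a uniform bound on derived length'' is a reasonable reorganization (the paper instead runs the induction directly on $G$ and extracts the bound $2+d$ on the derived length from the construction itself). However, the step you yourself flag as the technical heart --- collapsing the abelian/locally-elliptic/solvable tower --- is where the proposal breaks, and the mechanism you propose for it is false. You claim a locally elliptic group acting on a group of finite Hirsch length ``acts through a finite quotient'' because the automorphism group of the solvable radical is ``essentially discrete.'' With the paper's topological Hirsch length this is not so: $h(\mathbb{R}^3)=3$ while $\mathrm{Aut}(\mathbb{R}^3)=GL_3(\mathbb{R})$ is connected, and in your own example $H=\mathbb{R}^3\rtimes SO(3)$ the locally elliptic layer $SO(3)=\Lambda(H/\mathbb{R}^3)$ acts \emph{faithfully} on $\mathbb{R}^3$, through no finite quotient whatsoever. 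So the rigidity statement that is supposed to push every locally elliptic layer into the top compact quotient is refuted by the very example you use to motivate it, and the ``closure lemma'' you then invoke (solvable-by-compact is closed under extensions by compact groups inside finite-Hirsch-length [TEA]) is precisely the unproved crux rather than something one can grant: in an extension $1\to N\to E\to Q\to 1$ with $N$ solvable-by-compact and $Q$ compact, a solvable closed cocompact normal subgroup of $N$ need not be normal in $E$, and the compact middle layer cannot simply be ``pushed past'' the solvable one by the second isomorphism theorem.

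The device the paper uses to do this collapsing --- and which is absent from your proposal --- is the Hillman--Linnell centralizer trick: with $K\trianglelefteq G$ such that $H/K$ is abelian and $H$ cocompact, and with $L\trianglelefteq G$ cocompact in $K$ (so that $K/L$ is compact), the centralizer $C$ of $K/L$ in $H/L$ satisfies $[C,C]\leq (K/L)\cap C\leq Z(C)$, hence is nilpotent of class at most $2$ and in particular solvable of derived length at most $2$, and it is normal and cocompact; the preimage in $G$ of this centralizer is the desired solvable closed cocompact normal subgroup, of derived length at most $2+d$. Any correct write-up along your lines still needs this (or an equivalent) argument to absorb the compact and locally elliptic layers; without it the induction does not close.
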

\begin{proof}
	Assume first that $G$ is compactly generated. Arguing by induction, let the result be true for  such groups with
	Hirsch length at most $n$ and let $G$ be in [$TEA$] of Hirsch length $n+1$. Then by transfinite induction, one could show that $G$ has a normal closed subgroup $K$ with $G/K$ infinite and abelian-by-compact. Modding out the maximal compact normal subgroup, we may assume that $G/K$ is abelian, but not compact. Since $G/K$ is compactly generated \cite[2.C.8(4)]{ch}, it is $\mathbb Z^{r}$-by-compact, for some $r>0$ \cite[Proposition 30]{mo}. In particular, $h(G/K)=r>0$. Choose a subgroup $H$ of $G$  containing $K$ such that $H/K$ 	is a maximal abelian normal closed subgroup of $G/K$, then $H$ is cocompact in $G$. Then the action of $G/H$ on $H/K$ (by conjugation) is effective, and $r = h(G/K) < h(G) = n+1$. Since $ h(K) \leq h(G) -r<n$, by the
	inductive hypothesis, $K$ has a normal cocompact closed subgroup $L$ containing
	$\Lambda(K)$ with $L/\Lambda(K)$ a  maximal solvable normal subgroup of $K/\Lambda(K)$ say with derived length $d$. As both $\Lambda(K)$ and $L$ are characteristic in $K$, they are also normal in $G$. Moreover, $\Lambda(K)=K\cap \Lambda(G)$. Now the centralizer of $K/L$ in $H/L$ is a normal solvable closed cocompact subgroup of $G/L$ with derived length at most 2. Finally, $G/A(G)$ has a maximal solvable normal closed cocompact subgroup of derived length at most $2+d$, since it contains the pre-image of the centralizer of $K/L$ in $H/L$. This completes the inductive argument in the cocompact case. 
									
	In general, let $\{G_i\}_{i \in I}$ be the set of compactly generated subgroups of $G$. By the above argument, each $G_i$ has a normal closed subgroup $H_i$ containing $\Lambda(G_i)$ such that $H_i/\Lambda(G_i)$ is the maximal	solvable normal closed cocompact subgroup of $G_i/\Lambda(G_i)$ with derived length at most $2+d$, for $d$ as in the previous case. Clearly $H_i$ is the maximal closed normal locally elliptic-by-solvable subgroup of $G_i$.  For any index $j\in I$, $H_i \cap G_j$ is a normal locally elliptic-by-solvable subgroup of $G_j$, and so $H_i \cap G_j\leq H_j$. Also if $G_j\leq G_i$, then $H_j\leq H_i$. Let $H$ be the union of all $H_i$'s and for $ x, y \in  H$ and $g \in G$, choose indices $i, j, k \in I$ with $x\in	H_i, y \in H_j$, and $g \in G_k$. Choose $\ell\in J$ such that $G_\ell$ contains
	$G_i \cup G_j \cup G_k$. Then $x, y \in H_\ell$ (by the above argument) and so are  $xy^{- l}$ and $gxg^{-1}$, that is, $H$ is a normal  (not necessarily closed) cocompact subgroup of $G$. 
    Let $D_i$ be the $2+d$ derived subgroup of $H_i$. Then $D_i$ is a locally elliptic normal subgroup of $G_i$, and  by an argument similar to the above, $D:=\bigcup_{i\in I} D_i$ is a locally elliptic (not necessarily closed) normal subgroup of $G$. But the $2+d$ derived subgroup of $H$ is contained in  $D$ (as each iterated commutator involves only finitely many elements of $H$), thus by the algebraic isomorphism, $H\Lambda(G)/\Lambda(G)\simeq H/(H \cap \Lambda(G))$, the left hand side is solvable of derived length at most $2+d$, and the inductive argument is complete in the general case. 
\end{proof}

As a result we get the following corollary which extends \cite[Corollary 1]{hl} and \cite[Theorem 2]{h}. 

\vspace{.3cm}
\begin{corollary}  \label{sbc}
	A topologically elementary amenable (TEA) group of finite Hirsch length with no nontrivial locally elliptic normal closed subgroup  is solvable-by-compact. 
\end{corollary}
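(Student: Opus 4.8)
The plan is to deduce Corollary \ref{sbc} directly from Lemma \ref{scc} by exploiting the hypothesis that the locally elliptic radical is trivial. Let $G$ be in [TEA] with finite Hirsch length and assume that $G$ has no nontrivial locally elliptic normal closed subgroup. The first observation is that this hypothesis is precisely the statement that $\Lambda(G)=1$, since $\Lambda(G)$ is by definition the maximal locally elliptic closed normal subgroup of $G$, and it is trivial exactly when no such nontrivial subgroup exists. Hence $G/\Lambda(G)=G/1=G$ itself.

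With this identification in hand, the argument is essentially immediate. By Lemma \ref{scc}, the quotient $G/\Lambda(G)$ has a maximal solvable normal closed cocompact subgroup. Substituting $\Lambda(G)=1$, this says that $G$ itself has a normal closed cocompact subgroup $S$ which is solvable. By the very definition of the class of solvable-by-compact groups (a group possessing a solvable closed cocompact normal subgroup, whose quotient is then compact since $S$ is cocompact), this exhibits $G$ as solvable-by-compact, which is exactly the desired conclusion.

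I do not expect any genuine obstacle here, as the corollary is a formal specialization of the lemma once one recognizes that the no-nontrivial-locally-elliptic-normal-closed-subgroup hypothesis is synonymous with $\Lambda(G)=1$. The only point requiring a word of care is to confirm that a closed cocompact normal solvable subgroup does witness membership in the solvable-by-compact class in the topological sense used throughout the paper, i.e.\ that cocompactness of $S$ forces $G/S$ to be compact; this follows from the equivalence recorded in Section \ref{Box} between cocompactness of a closed subgroup and compactness of the associated quotient space (combining \cite[Lemma 2.C.9]{ch} with \cite[Proposition 18]{th}), applied to the normal subgroup $S$ so that $G/S$ is a compact topological group. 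Thus the proof reduces to invoking Lemma \ref{scc} and reading off the conclusion, and the bulk of the real work has already been carried out in establishing that lemma.
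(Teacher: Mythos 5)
Your proof is correct and follows exactly the route the paper intends: the corollary is stated as an immediate consequence of Lemma \ref{scc}, obtained by observing that the hypothesis means $\Lambda(G)=1$ and reading off the solvable normal closed cocompact subgroup. Nothing further is needed.
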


\vspace{.3cm}
The next result extends the first assertion of \cite[Theorem 2]{h}.

\vspace{.3cm}
\begin{proposition}  \label{level}
	If $G$ is topologically elementary amenable (TEA) of finite Hirsch length $n$, then $G$ is in $L\mathfrak X_{n+1}$. 
\end{proposition}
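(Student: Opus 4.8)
The plan is to reduce to the compactly generated case and then argue by induction on the Hirsch length. Both $h(G)$ and membership in $L\mathfrak{X}_{n+1}$ are computed from the compactly generated closed subgroups of $G$, and every such subgroup $K$ is again in [TEA] (the class being closed under closed subgroups) with $h(K)\le h(G)=n$. Thus it suffices to establish the auxiliary statement $P(n)$: every compactly generated [TEA] group of Hirsch length $n$ lies in $\mathfrak{X}_{n+1}$. Granting $P(m)$ for all $m\le n$ and using that the hierarchy is increasing (so that $\mathfrak{X}_{h(K)+1}\subseteq \mathfrak{X}_{n+1}$), each compactly generated closed $K\le G$ lands in $\mathfrak{X}_{n+1}$, whence $G\in L\mathfrak{X}_{n+1}$. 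I would prove $P(n)$ by strong induction on $n$.

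For the base case $P(0)$ I would show that a compactly generated [TEA] group $G$ with $h(G)=0$ is compact, hence in $\mathfrak{X}_1$. By \cite[2.C.8]{ch} the quotient $G/\Lambda(G)$ is compactly generated, and by Lemma \ref{scc} it is solvable-by-compact; since $h(G/\Lambda(G))\le h(G)=0$, Corollary \ref{va2} gives ${\rm asdim}(G/\Lambda(G))=0$, so $G/\Lambda(G)$ is locally elliptic by \cite[4.D.4]{ch}. As $\Lambda(G)$ is locally elliptic, $G$ itself is locally elliptic by \cite[4.D.6]{ch}, and being compactly generated it is compact.

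For the inductive step, let $G$ be compactly generated [TEA] with $h(G)=n\ge 1$; in particular $G$ is non-compact. Repeating the extraction carried out in the compactly generated case of the proof of Lemma \ref{scc}, I would produce a closed normal subgroup $K\trianglelefteq G$ for which $G/K$ is infinite and abelian-by-compact, so that $G/K\in\mathfrak{X}_1$ and $r:=h(G/K)\ge 1$. The general inequality of Lemma \ref{hf} then yields $h(K)\le h(G)-h(G/K)=n-r\le n-1$. Every compactly generated closed subgroup $K_0\le K$ lies in [TEA] with $h(K_0)\le h(K)\le n-1$, so the induction hypothesis $P(h(K_0))$ puts $K_0$ in $\mathfrak{X}_{h(K_0)+1}\subseteq\mathfrak{X}_n$; hence $K\in L\mathfrak{X}_n$. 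Thus $G$ is an extension of a member of $L\mathfrak{X}_n$ by a member of $\mathfrak{X}_1$, i.e. $G\in (L\mathfrak{X}_n)\mathfrak{X}_1=\mathfrak{X}_{n+1}$, completing the induction.

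The step I expect to be the main obstacle is fitting $K$ into the extension defining $\mathfrak{X}_{n+1}$. The descent in the hierarchy must lower the Hirsch length of the normal subgroup, and this cannot be achieved through a compact or cocompact normal subgroup, both of which leave $h(G)$ unchanged by Lemma \ref{hf}; it is precisely the abelian-by-compact quotient supplied by Lemma \ref{scc} that forces $G/K$ to be non-compact and hence $K$ to be neither compact nor cocompact. Consequently some care is needed in reading $(L\mathfrak{X}_n)\mathfrak{X}_1$ as the class of (closed normal in $L\mathfrak{X}_n$)-by-$\mathfrak{X}_1$ extensions realized by this construction, rather than insisting on cocompactness of the kernel. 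A secondary point to verify is that Corollary \ref{va2} genuinely applies to $G/\Lambda(G)$ in the base case, i.e. that it is solvable-by-compact of finite Hirsch length.
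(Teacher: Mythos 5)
Your argument is essentially the paper's: both reduce to the compactly generated case, induct on the Hirsch length, and descend via the closed normal subgroup $K$ with infinite abelian-by-compact quotient extracted in the proof of Lemma \ref{scc} (the paper additionally passes through the intermediate cocompact subgroup $H$ with $H/K$ free abelian, which only tightens the conclusion to $G\in\mathfrak X_n$; your more detailed treatment of the base case $n=0$ fills in a step the paper leaves unargued). The worry you flag about the kernel of the defining extension being neither compact nor cocompact applies verbatim to the paper's own step ``$H\in (L\mathfrak X_{n-2})\mathfrak X_1$'', so it reflects how $(L\mathfrak X_\alpha)\mathfrak X_1$ must be read throughout rather than a defect specific to your proof.
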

\begin{proof}
This is proved by induction on $n$. If $n=0$, then $G$ is compact and so in $\mathfrak X_1$. In general, if $G$ is compactly generated and non compact, by the argument used in the proof of the above lemma, it has normal closed subgroups $K \leq H\leq G$ such that $H$ is cocompact in $G$ and $H/K$ is a free abelian of positive rank, $h(K)<h(G)-1$ and $h(G/K) < h(G)$. By the induction hypothesis, $K\in L\mathfrak X_{n-2}$ and $G/K\in L\mathfrak X_{n-1}$. Then  $H\in (L\mathfrak X_{n-2})\mathfrak X_1=\mathfrak X_{n-1}$. Thus $G\in \mathfrak X_{n-1}\mathfrak X_{1}\subseteq  \mathfrak X_{n}$. If $G$ is not compactly generated, then all of its compactly generated subgroups are in $\mathfrak X_{n}$ by the above argument, and so $G\in L\mathfrak X_{n}$.   
\end{proof}

Note that there are finitely generated, torsion free,
elementary amenable groups which are not virtually solvable \cite[page 162]{h}. An argument based on \cite[Lemma 2.2]{fw} proves the next lemma (c.f. the proof of \cite[Proposition 3.7]{fw}).

\vspace{.3cm}  
\begin{lemma}  \label{ccpt}
	Let $G$ be a locally compact group and $\sigma$ be a family of cocompact closed subgroups of $G$. Then ${\rm asdim}(\Box_\sigma G) = \sup\{{\rm asdim}(\Box_\tau G): \tau\subseteq \sigma\ {\rm countable} \}$. 
\end{lemma}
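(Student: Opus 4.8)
The plan is to prove the two inequalities separately, the nontrivial content lying in showing that the left-hand side does not exceed the supremum over countable subfamilies.

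For the inequality $\sup_\tau {\rm asdim}(\Box_\tau G)\le {\rm asdim}(\Box_\sigma G)$, I would simply note that for each countable $\tau\subseteq\sigma$ the box space $\Box_\tau G$ is, as a metric space, a subspace of $\Box_\sigma G$: it consists of the components $(G/H,d_H)$ indexed by $H\in\tau$, carrying exactly the restriction of the box metric $d_B$. Since asymptotic dimension is monotone under passage to subspaces, ${\rm asdim}(\Box_\tau G)\le {\rm asdim}(\Box_\sigma G)$ for every such $\tau$, and taking the supremum over all countable $\tau$ gives the claim.

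For the reverse inequality I would first dispose of the trivial case: if the supremum is infinite there is nothing to prove, so assume $\sup_\tau {\rm asdim}(\Box_\tau G)=d<\infty$ and aim to show ${\rm asdim}(\Box_\sigma G)\le d$. The key tool is the uniform covering characterization of the asymptotic dimension of a box space (the analogue for families of Lemma \ref{basic}, as in \cite[Lemma 2.2]{fw}): ${\rm asdim}(\Box_\sigma G)\le d$ holds if and only if for every $R>0$ there is a single bound $B=B(R)$ such that every component $(G/H,d_H)$ with $H\in\sigma$ admits a cover by $d+1$ families of mutually $R$-disjoint sets, each of diameter at most $B$. The point of this reformulation is that, since distinct components of a box space are coarsely separated, an $R$-controlled uniformly bounded cover of the whole space is equivalent to a family of such covers of the individual components sharing one common diameter bound.

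Granting this, I would argue by contradiction. Suppose ${\rm asdim}(\Box_\sigma G)>d$. Then there is a scale $R_0>0$ for which no uniform bound $B$ works; that is, for each integer $n\ge 1$ there exists $H_n\in\sigma$ whose component $G/H_n$ admits no $(d+1)$-coloured $R_0$-disjoint cover of diameter at most $n$. Collecting these into the countable family $\tau:=\{H_n:n\ge 1\}\subseteq\sigma$, I claim ${\rm asdim}(\Box_\tau G)>d$: for any proposed uniform bound $B$, choosing any $n\ge B$ shows that the component $G/H_n\subseteq\Box_\tau G$ already fails to admit a $(d+1)$-coloured $R_0$-disjoint cover of diameter at most $B$. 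Hence no uniform cover exists at scale $R_0$, so ${\rm asdim}(\Box_\tau G)>d$, contradicting $\sup_\tau {\rm asdim}(\Box_\tau G)=d$. This forces ${\rm asdim}(\Box_\sigma G)\le d$, completing the proof.

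I expect the main obstacle to be the careful verification of the uniform covering characterization itself, and in particular the assembly step: passing from per-component covers with a common diameter bound back to an honest $R$-disjoint cover of the whole box space requires controlling the finitely many pairs of components lying within distance $R$ of one another and merging overlapping same-coloured pieces without destroying either the diameter bound or the disjointness. This bookkeeping, which also quietly handles the case where $\sigma$ is uncountable via the coarse separation of components in the box metric, is exactly what is packaged in \cite[Lemma 2.2]{fw} and the proof of \cite[Proposition 3.7]{fw}; once it is in hand, the extraction of the countable witness $\tau$ is immediate.
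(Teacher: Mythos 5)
Your argument is correct and is essentially the one the paper intends: the paper gives no details, merely citing \cite[Lemma 2.2]{fw} and the proof of \cite[Proposition 3.7]{fw}, and your two steps (subfamily monotonicity for one inequality, and extraction of a countable witness family from the failure of a uniform covering bound at a single scale $R_0$ for the other) are exactly the content of that reference. The only caveat is terminological: for uncountable $\sigma$ there is no honest box metric, so ${\rm asdim}(\Box_\sigma G)$ must be read from the start as the uniform asymptotic dimension of the family $\{(G/H,d_H)\}_{H\in\sigma}$, which makes your ``uniform covering characterization'' the definition rather than something needing the assembly bookkeeping you describe.
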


\vspace{.3cm}
Note that, unless $\sigma$ is countable, the box space is not  metrizable \cite[Theorem 2.55]{r2}, and the left hand side of the equality in the above lemma should be understood in the general context of asymptotic dimension of (uncountable) families of metric spaces (the so called, total box space). But this does not lead to a different asymptotic dimension (by the subspace and union permanence of asymptotic dimension \cite[Theorems 6.2, 6.3]{g}; c.f. the argument in the proof of \cite[Proposition 3.7]{fw}).

A family $\sigma$  of cocompact subgroups of $G$ is called {\it separating} if for any non-empty relatively compact subset $F \subseteq \bar F\subseteq G \backslash \{1\}$, there is $G_\alpha\in \sigma$ with $F\subseteq G\backslash G_\alpha$. A group $G$ is {\it residually compact} (RC) if it has a separating family of cocompact subgroups, or equivalently, the family of all cocompact subgroups of $G$
is separating. This is essentially the same as the notion introduced in section \ref{Box} for compactly generated groups, in the following sense: a decreasing sequence $\sigma=(G_n)$ of cocompact closed subgroups of a compactly generated, locally compact, $\sigma$-compact, Hausdorff group $G$ has trivial intersection iff it is separating.  The family $\sigma$ is called {\it semiconjugacy-separating} if for any non-empty relatively compact subset $F \subseteq G$, there is
$G_\alpha\in\sigma$ with  $G_\alpha\cap g^GG_\alpha$ empty for every $g\in F \backslash \{1\}$, where $g^G$ is the
conjugacy class of $g$. As in \cite[Lemma 3.12]{fw}, this is equivalent to the condition that for any non-empty relatively compact subset $F \subseteq G$, there is
$G_\alpha\in\sigma$ with the quotient map $q_\alpha: G\to G/G_\alpha$ injective on $Fg$, for each $g$; which in turn is equivalent to the condition that for any non-empty relatively compact subset $F \subseteq G$, there is
$G_\alpha\in\sigma$ with $N(G_\alpha)\cap F$ either empty or $\{1\}$, where $N(H):=\cup_{g\in G} H^g$, for a subgroup $H$. In particular, every separating family of cocompact normal subgroups
is semi-conjugacy-separating. Also any family $\sigma$ of cocompact subgroups is semi-conjugacy-separating whenever  there exists a separating family $\tau$ of cocompact normal subgroups such that $\sigma$ is a refinement of $\tau$ (that is, for any $G_\beta \in\tau$ there is $G_\alpha\in\sigma$ with $G_\alpha\leq G_\beta$). Now a locally compact group $G$ is residually compact (RC) if and only if it has a semiconjugacy-separating family of cocompact subgroups. 

\vspace{.3cm}
The next result extends \cite[Proposition 3.16]{fw}. 

\vspace{.3cm}
\begin{lemma}  \label{rc3}
Let a residually compact (RC) group $G$ with a a plig metric $d$ act on itself by right multiplication. Let $\sigma$ be a semi-conjugacy-separating family of cocompact  subgroups of $G$.
Then ${\rm asdim}(G) \leq {\rm asdim}(\Box_{\sigma} G)$.	
\end{lemma}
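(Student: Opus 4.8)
The plan is to reduce the statement to the Basic Lemma (Lemma \ref{basic}) by extracting, from the semi-conjugacy-separating family $\sigma$, a regular $RC$-approximation in the sense of Definition \ref{rrc}, and then to invoke Proposition \ref{asdim}(i). The first step is to observe that the inequality is only nontrivial when $d:={\rm asdim}(\Box_\sigma G)$ is finite, so I fix such a $d$ and aim to produce, for each $R>0$, a uniformly bounded cover of $G$ by $d+1$ mutually disjoint, right-invariant (under some single cocompact subgroup) sets with Lebesgue number at least $R$, which by the equivalence (ii)$\Leftrightarrow$(i) of the Basic Lemma and part (i) of Proposition \ref{asdim} yields ${\rm asdim}(G)\le d$.

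The heart of the argument is a diagonal/transfer construction. Since $G$ is $\sigma$-compact, I would exhaust it by an increasing sequence of relatively compact sets $F_1\subseteq F_2\subseteq\cdots$ with $\bigcup_k F_k=G$. Applying the semi-conjugacy-separating property to each $F_k$ produces a member $G_k\in\sigma$ such that the quotient map $q_k\colon G\to G/G_k$ is injective on every translate $F_k g$; as noted in the paragraph preceding the statement, this is exactly the reformulation of semi-conjugacy-separation in terms of $N(G_k)\cap F_k\subseteq\{1\}$. The countable subfamily $\tau:=\{G_k\}_k$ is then itself semi-conjugacy-separating, and by Lemma \ref{ccpt} we have ${\rm asdim}(\Box_\tau G)\le{\rm asdim}(\Box_\sigma G)=d$. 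Thus I may replace $\sigma$ by the countable, metrizable family $\tau$ and work entirely inside $\Box_\tau G$, where the Basic Lemma applies directly.

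The decisive step is then to run the Basic Lemma on $\Box_\tau G$: given $R>0$, condition (ii) supplies an index $n$ and a uniformly bounded cover $\mathcal U=\{U_0,\dots,U_d\}$ of $G$ by mutually disjoint $G_n$-invariant sets with Lebesgue number at least $R$. This is precisely the data that, fed back into the equivalence (ii)$\Leftrightarrow$(i), certifies that the \emph{single} regular $RC$-approximation obtained by passing to a decreasing reindexing of $\tau$ realizes ${\rm asdim}(G)\le d$ via Proposition \ref{asdim}(i). The main obstacle I anticipate is the verification that the $G_k$ can be arranged into a genuine \emph{decreasing} sequence with trivial intersection, so that they form an $RC$-approximation in the strict sense of Definition \ref{rc}: the sets $G_k$ coming out of the separating property need not be nested, so I would replace each $G_k$ by a finite intersection $G_1\cap\cdots\cap G_k$, using the closure of $\sigma$-type families under the operations implicit in residual compactness, and then check that this intersection is still cocompact and still separates $F_k$. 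Controlling cocompactness of such intersections is the delicate point, and I expect to handle it exactly as in the passage from $\sigma$ to a refinement in the discussion of semi-conjugacy-separation, invoking that any refinement of a separating family of cocompact normal subgroups remains semi-conjugacy-separating; once that nesting is in place, the rest is a direct citation of Lemmas \ref{basic} and \ref{ccpt} together with Proposition \ref{asdim}(i).
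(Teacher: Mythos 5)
Your reduction to the Basic Lemma founders on the step you yourself flag as delicate: arranging the subgroups $G_k$ produced by semi-conjugacy-separation into a decreasing sequence. You propose to replace $G_k$ by $G_1\cap\cdots\cap G_k$, but the intersection of two closed cocompact subgroups of a locally compact group need not be cocompact; the paper records this closure property only for t.d.l.c.s.c.\ groups, where it follows from the second isomorphism theorem, and it fails in general. Indeed, in the paper's own example $G=SL(2,\mathbb R)$, a cocompact lattice $\Gamma$ and the upper triangular subgroup $B$ are both closed and cocompact, yet $\Gamma\cap B$ is a discrete subgroup of the solvable group $B$ and hence cannot be cocompact in $G$. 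There is no ``closure under the operations implicit in residual compactness'' to invoke here. Moreover, even where the intersections are cocompact, the refined family $\tau'=\{G_1\cap\cdots\cap G_k\}$ is no longer a subfamily of $\sigma$, so Lemma \ref{ccpt} does not give ${\rm asdim}(\Box_{\tau'}G)\le{\rm asdim}(\Box_\sigma G)$; since $G/(G_1\cap\cdots\cap G_k)$ surjects onto $G/G_k$ and may well have strictly larger asymptotic dimension, the chain of inequalities you need breaks exactly at this point. There is also a circularity in your ``decisive step'': condition $(ii)$ of the Basic Lemma becomes available only after you already know that $\tau$ is a regular $RC$-approximation whose box space has asymptotic dimension at most $d$, which is precisely what the (unavailable) nesting was supposed to deliver.

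The paper's proof avoids nested sequences entirely by working one scale at a time. Given $R>0$, take an $S$-bounded cover of $\Box_\sigma G$ of multiplicity at most $n+1$ and Lebesgue number at least $R$; the semi-conjugacy-separating property supplies a \emph{single} $G_\alpha\in\sigma$ for which $q_\alpha\colon G\to G/G_\alpha$ is injective on every ball $B_{3S}(g)$, hence isometric on every $B_S(g)$ by \cite[Lemma 3.15]{fw}; one then lifts the trace of the cover on $G/G_\alpha$ back to a $G_\alpha$-invariant cover of $G$ with the same multiplicity and Lebesgue number, as in \cite[Proposition 3.16]{fw}, and concludes ${\rm asdim}(G)\le n$ directly from the cover characterization of asymptotic dimension. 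Because a different $G_\alpha$ may be chosen for each $R$, no decreasing sequence, no trivial-intersection condition, and no intersection of cocompact subgroups is ever needed. Your opening reduction to a countable subfamily via Lemma \ref{ccpt} is fine and compatible with this; the part to replace is the attempt to manufacture a regular $RC$-approximation, which should give way to the scale-by-scale lifting argument.
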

\begin{proof}
As mentioned in the paragraph after Lemma \ref{ccpt}, we work with the total box space (which is always metrizable) if needed, but use the same notation for the sake of simplicity. 
Assume that asdim$(\Box_\sigma G) \leq n$ for some $n\in\mathbb Z^{+}$ and consider the metric on $\Box_\sigma G$ that restricts to the quotient metric of on quotients of $G$ by elements of $\sigma$. 
Given $ R > 0$, 
exists $S \geq R$ and $S$-bounded cover $\mathcal U$ of $\Box_\sigma G$ with multiplicity at most $n + 1$ and Lebesgue number at least $R$.
Choose $G_\alpha \in\sigma$ with $q_\alpha: G \to  G/G_\alpha$ 
injective on $B_{3S}(1)g = B_{3S}(g)$,  for any $g \in G$. By \cite[Lemma 3.15]{fw}, $q_\alpha$ is isometric on $B_S(g)$ for any $g$. Let $\mathcal U_\alpha$ consist of intersections of elements of $\mathcal U$ with $G/G_\alpha$ and lift it to a $G_\alpha$-invariant cover $\mathcal V$ of $G$ of multiplicity at most $n + 1$ and Lebesgue number at least
$R$, exactly as in the proof of \cite[Proposition 3.16]{fw}. This means that asdim$(G) \leq n$ and we are done.	    
\end{proof}

In the next lemma, for $R>0$ and a subset $Y$ of a metric space $(X,d)$, we use the notation $P (Y; R; d) := \{y \in X:  d(Y, y) \leq R\}$. 

\vspace{.3cm}
\begin{lemma}  \label{exact2}
	Let $1\rightarrow N\xrightarrow[]{\iota} G\xrightarrow[]{\pi} K\rightarrow 1$ be a short exact sequence of
	locally compact groups with $\iota$ (identified with) an inclusion. Let $H$ be a closed (but not
	necessarily normal) subgroup of $G$ and let $q : G \to G/H$ and $p : K \to
	K/\pi(H)$ be the corresponding quotient maps. Then
	
	($i$) for the $G$-action induced by $\pi$ on $K/\pi(H)$, there is a unique $G$-equivariant map $\rho: G/H \to K/\pi(H)$ with $\rho\circ q = p\circ\pi$,
	
	($ii$) for any $k\in K$, $\rho^{-1}(p(k))=
	q(gN)$, for any $g \in \pi^{-1}(k)$, and this $N$-invariant, 
	
	($iii$) $H$ is cocompact in $G$ iff $\pi(H)$ is cocompact in $K$ and $N \cap H$ is cocompact in $N$, 
	
	Furthermore, for any plig metric $d$ on $G$, 
	
	($iv$) the quotient metric $d^{'}$ on $K/\pi(H)$ is the same as the metric induced by $\rho$ from the quotient metric of $G/H$,
	
	($v$) for any $R > 0$,  $k \in K$ and $g \in\pi^{-1}(k)$, $\rho^{-1}(P (p(k); R; d^{'}))=q(P(N; R; d)g)$ contains the $R$-net $q(Ng)$ isometric to $N/(N \cap H^g)$ under the corresponding quotient metric.
		
\end{lemma}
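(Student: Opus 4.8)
The plan is to split the lemma into its structural parts (i)--(iii), which are purely group-theoretic, and its metric parts (iv)--(v), which I would reduce — via part (iv) — to computations carried out entirely in $(G,d)$. Throughout I write $\Phi:=p\circ\pi=\rho\circ q\colon G\to K/\pi(H)$, so that $\Phi(a)=\pi(a)\pi(H)$.

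\textbf{Parts (i)--(ii).} For (i) I would simply define $\rho(gH):=\pi(g)\pi(H)=p(\pi(g))$. Well-definedness is the observation that replacing $g$ by $gh$ multiplies $\pi(g)$ on the right by $\pi(h)\in\pi(H)$; continuity is the universal property of the quotient map $q$ applied to the continuous map $p\circ\pi$; $G$-equivariance for the action $g\cdot(k\pi(H))=\pi(g)k\pi(H)$ and uniqueness (since $q$ is onto) are then formal. For (ii), fixing $g\in\pi^{-1}(k)$, one has $\rho(g'H)=p(k)$ iff $\pi(g')\in k\pi(H)$ iff $g'\in gNH$, and reducing modulo $H$ gives exactly $q(gN)$. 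Since $N=\ker\pi$ acts trivially on $K/\pi(H)$, equivariance forces every fibre of $\rho$ to be $N$-invariant.

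\textbf{Part (iii).} For the forward direction, $\rho$ is a continuous surjection, so compactness of $G/H$ descends to $K/\pi(H)$, i.e. $\pi(H)$ is cocompact; moreover the fibre over $p(1)$, which by (ii) is $q(N)\cong N/(N\cap H)$, is closed in the compact space $G/H$ and hence compact, i.e. $N\cap H$ is cocompact in $N$. For the converse I would choose a compact $C_K\subseteq K$ with $K=C_K\pi(H)$, lift it to a compact $C_G\subseteq G$ with $\pi(C_G)=C_K$ using \cite[Lemma 2.C.9]{ch}, choose a compact $C_N\subseteq N$ with $N=C_N(N\cap H)$, and then verify $G=C_GC_NH$ directly: for $g\in G$ write $\pi(g)=\pi(c_G)\pi(h)$, so $c_G^{-1}gh^{-1}\in N$, and factoring this element through $C_N(N\cap H)$ places $g$ in $C_GC_NH$. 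As $C_GC_N$ is compact, $H$ is cocompact.

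\textbf{Part (iv), the linchpin.} Both metrics on $K/\pi(H)$ arise from $(G,d)$ by composing two quotient-metric constructions: $d'$ along $G\to K$ followed by $K\to K/\pi(H)$, and the $\rho$-pushforward of $\bar d$ along $q\colon G\to G/H$ followed by $\rho$. Taking quotient (pseudo)metrics is associative — the iterated infimum flattens into a single infimum over $(G,d)$ — and the two composites coincide as maps, $p\circ\pi=\rho\circ q=\Phi$, with identical fibres. Hence both equal the quotient metric of $(G,d)$ along $\Phi$, so $d'$ equals the $\rho$-pushforward. That these pseudometrics are honest, topology-inducing metrics is the content of the properness and closedness facts recorded around Lemma \ref{pd}, which I would invoke rather than reprove.

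\textbf{Part (v) and the main obstacle.} Using (iv), $d'(p(k),\rho(g'H))$ is the $(G,d)$-distance between the $\Phi$-fibres $\Phi^{-1}(p(k))=gNH$ and $\Phi^{-1}(\rho(g'H))=g'NH$, so claim (a) becomes a set identity read entirely in $G/H$: one checks that $q(P(N;R;d)g)$ is $\rho$-saturated (a coset is included as soon as \emph{one} of its representatives meets the $R$-neighbourhood, and the $\pi(H)$-periodicity of the fibres matches that of the neighbourhood) and that a whole fibre is included precisely when the fibre distance is at most $R$. Claim (b) is then immediate, since $q$ is $1$-Lipschitz and $Ng$ is $R$-dense in $P(N;R;d)g$. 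For (c), the coset bijection $n(N\cap H^g)\mapsto q(ng)$ from (ii) is the required isometry, where $N/(N\cap H^g)$ carries the quotient metric induced from the ambient $G/H$; I would stress that this ``corresponding quotient metric'' is the ambient one and is genuinely distorted relative to the intrinsic metric of $N$, so one must not confuse the two. I expect the representative-and-saturation bookkeeping in (a) to be the main obstacle: forcing the two descriptions of the neighbourhood to coincide on the nose is exactly where left-invariance of $d$ and normality of $N$ do the essential work.
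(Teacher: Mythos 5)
Your handling of (i), (ii), (iv) and the converse direction of (iii) is correct and far more explicit than the paper's own proof, which disposes of all five parts in as many lines and defers the substance of (v) to the computation in \cite[Lemma 4.1(5)]{fw}. The genuine gap is in your forward direction of (iii). You argue that the fibre $\rho^{-1}(p(1))=q(N)=NH/H$ is closed in the compact space $G/H$, hence compact, hence $N\cap H$ is cocompact in $N$. Both steps fail: $NH$ need not be closed in $G$ (equivalently $\pi(H)$ need not be closed in $K$, so the singleton $\{p(1)\}$ need not be closed in $K/\pi(H)$); and even granting compactness of $q(N)$, transferring it back to cocompactness of $N\cap H$ in $N$ requires the continuous bijection $N/(N\cap H)\to NH/H$ to be proper, which is precisely the failure of the topological second isomorphism theorem that the paper emphasizes in Section~\ref{tvn}. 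Indeed the implication is false without extra hypotheses: take $G=\mathbb{R}^2$, $H=\mathbb{Z}^2$ and $N=\{(t,\alpha t):t\in\mathbb{R}\}$ with $\alpha$ irrational; then $H$ is cocompact in $G$ and $\pi(H)$ is cocompact in $K\simeq\mathbb{R}$, but $N\cap H=\{0\}$ is not cocompact in $N\simeq\mathbb{R}$. So this direction cannot be repaired by better bookkeeping; it needs an added assumption such as $NH$ (equivalently $\pi(H)$) being closed. For what it is worth, the paper's own justification --- ``continuity and properness of the action of $G$ on $G/H$'' --- is no better, since the paper records in Section~\ref{Box} that this action is not proper unless $H$ is compact.

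The second shortfall is that (v) is described rather than proved. You correctly reduce all distances to distances between $\Phi$-fibres in $(G,d)$ via (iv), but you explicitly leave the set identity $\rho^{-1}(P(p(k);R;d'))=q(P(N;R;d)g)$ and the isometry statement as pending ``bookkeeping.'' The isometry is not routine: the subspace quotient metric on $q(Ng)\subseteq G/H$ is an infimum of $d$ over translates by all of $H$, whereas the quotient metric on $N/(N\cap H^g)$ is an infimum over the smaller group $N\cap H^g$, and showing the two infima agree (or making precise which ``corresponding quotient metric'' is meant) is exactly the content of the computation in \cite{fw} that the paper cites; left-invariance of $d$ and normality of $N$ enter there. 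As written, your proposal names this difficulty but does not resolve it.
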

\begin{proof}
($i$). The map $\rho(gH): = p(\pi(g))$ is well defined and $G$-equivariant, and unique by 
surjectivity of $q$.

($ii$). The first statement is clear and the second follows by normality of $N$.

($iii$). This follows from $(i)$ and $(ii)$ and continuity and properness of the action of $G$ on $G/H$ by left multiplication.

($iv$). This follows from $(i)$.

($v$) This follows from normality of $N$, the fact that $d$ is a plig metric and that the metrics on $K$ and $K/\pi(H)$ are quotient metrics (c.f., the proof of part $(5)$ in \cite[Lemma 4.1]{fw}). 	    
\end{proof}

The next lemma extends \cite[Proposition 4.3]{fw}. 

\vspace{.3cm}
\begin{lemma}  \label{exact3}
	Let $1\rightarrow N\xrightarrow[]{\iota} G\xrightarrow[]{\pi} K\rightarrow 1$ be a short exact sequence of
	locally compact groups, where $G$ has a plig metric. Let $\sigma$ be a family of cocompact subgroups of $G$. Consider the corresponding families $\sigma_1 :=\{N\cap G^g_\alpha: G_\alpha\in\sigma, g\in G\}$ and $\sigma_2:=\{\pi(G_\alpha): G_\alpha\in\sigma\}$ of cocompact subgroups of $N$ and $K$, respectively. Then
	$${\rm asdim}(\Box_\sigma G)\leq {\rm asdim}(\Box_{\sigma_1} N)+{\rm asdim}(\Box_{\sigma_2} K).$$	
\end{lemma}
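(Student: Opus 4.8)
The plan is to realize the inclusion--quotient data as a single coarse map between box spaces and to apply the Hurewicz-type mapping theorem for asymptotic dimension \cite{bd}. We may assume that both ${\rm asdim}(\Box_{\sigma_1}N)$ and ${\rm asdim}(\Box_{\sigma_2}K)$ are finite, since otherwise the asserted inequality is trivial; and by Lemma \ref{ccpt} we may pass to a countable subfamily of $\sigma$ (with the induced countable subfamilies of $\sigma_1,\sigma_2$), so that all the box spaces in sight are genuine metric spaces. For each $G_\alpha\in\sigma$ the map $\rho_\alpha$ of Lemma \ref{exact2}(i) sends $G/G_\alpha$ to $K/\pi(G_\alpha)$ by $\rho_\alpha(gG_\alpha)=\pi(g)\pi(G_\alpha)$, and these assemble, across the index $\alpha$, into a single map $F\colon \Box_\sigma G\to \Box_{\sigma_2}K$.

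First I would verify that $F$ is a Lipschitz (hence coarse) map. By Lemma \ref{exact2}(iv) each $\rho_\alpha$ is distance non-increasing for the quotient metrics, since the metric on $K/\pi(G_\alpha)$ is exactly the push-forward of the quotient metric on $G/G_\alpha$; and since $F$ respects the box indexing, sending distinct components to distinct components, the inter-component separation built into the box metric is preserved up to a constant. Thus $F$ is Lipschitz on $\Box_\sigma G$, and its image is (a subspace of) $\Box_{\sigma_2}K$, of asymptotic dimension at most ${\rm asdim}(\Box_{\sigma_2}K)$.

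Next I would identify the coarse fibres of $F$. Fix $R>0$ and a point $y=p(k)$ lying in the component $K/\pi(G_\alpha)$ of $\Box_{\sigma_2}K$, and choose $g\in\pi^{-1}(k)$. By Lemma \ref{exact2}(ii),(v) the restriction of the preimage to the $\alpha$-component, namely $\rho_\alpha^{-1}\big(P(p(k);R;d')\big)=q\big(P(N;R;d)g\big)$, is $N$-invariant and contains the $R$-net $q(Ng)$, which is isometric to $N/(N\cap G_\alpha^g)$ for the induced quotient metric. Since $N\cap G_\alpha^g\in\sigma_1$, this net is a component of $\Box_{\sigma_1}N$. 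As the box metric separates components, for fixed $R$ the ball $P(y;R;d')$ meets only boundedly many components of $\Box_{\sigma_2}K$, so $F^{-1}\big(P(y;R;d')\big)$ is a uniformly separated union of boundedly many $\sigma_1$-components of $\Box_{\sigma_1}N$, each up to a uniformly bounded neighbourhood. By the finite-union permanence of asymptotic dimension this does not raise the bound, so the family $\{F^{-1}(P(y;R;d')):y\in\Box_{\sigma_2}K\}$ has asymptotic dimension at most ${\rm asdim}(\Box_{\sigma_1}N)$, uniformly in $R$.

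With the Lipschitz map $F$ onto a space of asymptotic dimension ${\rm asdim}(\Box_{\sigma_2}K)$ and the uniform fibre bound ${\rm asdim}(\Box_{\sigma_1}N)$ just established, the Hurewicz-type theorem of \cite{bd} yields
$${\rm asdim}(\Box_\sigma G)\leq {\rm asdim}(\Box_{\sigma_1}N)+{\rm asdim}(\Box_{\sigma_2}K),$$
as desired. The main obstacle is the fibre analysis of the third paragraph: one must establish the asymptotic dimension bound on the $R$-preimages \emph{uniformly} over all components $\alpha$ and all cosets $g$, which is precisely where the coset-wise isometries $N/(N\cap G_\alpha^g)$ of Lemma \ref{exact2}(v) and the uniform boundedness of the correction $q(P(N;R;d)g)\setminus q(Ng)$ are essential. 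The verifications that $F$ is coarse and that the target retains its asymptotic dimension after restricting to a countable subfamily are comparatively routine.
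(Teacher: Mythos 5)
Your proposal is correct and follows essentially the same route as the paper: both use Lemma \ref{exact2} to obtain the $1$-Lipschitz component maps $\rho_\alpha$ whose coarse fibres contain $R$-nets isometric to $N/(N\cap G_\alpha^g)$, and both conclude with a Hurewicz-type fibering theorem for asymptotic dimension (the paper cites the uniform coarse-equivalence version of \cite{fw}, you cite \cite{bd}). Your additional care with the countable-subfamily reduction and the uniformity of the fibre bounds just makes explicit what the paper's brief proof leaves implicit.
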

\begin{proof}
	 The box families for which the asymptotic dimensions are calculated are quotients of the group by the cocompact subgroups in the family with the quotient metric. By the previous lemma, there are  1-Lipschitz
	 maps $\rho_\alpha : G/G_\alpha \to K/\pi(G_\alpha)$, and for any $R > 0$ and $y \in K/\pi(G_\alpha)$, the coarse fibre $\rho^{-1}(B_R(y)$ contains an $R$-net isometric to $N/(N \cap G^g_\alpha)$, for  $g\in (p\circ\pi)^{-1}(y)$. This means that we have a uniform coarse equivalence as in \cite[Theorem 3.8]{fw}, which gives the desired inequality.	    
\end{proof}

The following special case is of great importance.

\vspace{.3cm}
\begin{lemma}  \label{exact4}
	Let $1\rightarrow N\xrightarrow[]{\iota} G\xrightarrow[]{\pi} K\rightarrow 1$ be a short exact sequence of
	locally compact groups, where $G$ has a plig metric. Let $max$ be the family of all cocompact (normal) subgroups of $G$, and use the same notation for $N$ and $K$. Then 
	$${\rm asdim}(\Box_{max} G)\leq {\rm asdim}(\Box_{max} N)+{\rm asdim}(\Box_{max} K).$$	
\end{lemma}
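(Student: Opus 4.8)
The plan is to obtain this as a direct specialization of Lemma \ref{exact3}, combined with the monotonicity of box-space asymptotic dimension under passage to subfamilies. Concretely, take $\sigma = max$ to be the family of all cocompact subgroups of $G$ and feed it into Lemma \ref{exact3}. This immediately yields
$${\rm asdim}(\Box_{max} G)\leq {\rm asdim}(\Box_{\sigma_1} N)+{\rm asdim}(\Box_{\sigma_2} K),$$
where $\sigma_1 = \{N\cap G^g_\alpha : G_\alpha\in max,\ g\in G\}$ and $\sigma_2 = \{\pi(G_\alpha): G_\alpha\in max\}$. Thus everything reduces to replacing the induced families $\sigma_1,\sigma_2$ by the full families of cocompact subgroups of $N$ and $K$.

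First I would verify that $\sigma_1$ is a subfamily of the family $max$ of all cocompact subgroups of $N$, and that $\sigma_2$ is a subfamily of the corresponding family for $K$. For $\sigma_1$, each conjugate $G^g_\alpha$ is again cocompact in $G$ since conjugation by $g$ is a topological automorphism carrying $G_\alpha$ onto $G^g_\alpha$; hence by Lemma \ref{exact2}$(iii)$ the intersection $N\cap G^g_\alpha$ is cocompact in $N$, and being an intersection of closed subgroups it is also closed. For $\sigma_2$, Lemma \ref{exact2}$(iii)$ gives directly that $\pi(G_\alpha)$ is cocompact in $K$. Consequently $\sigma_1\subseteq max$ for $N$ and $\sigma_2\subseteq max$ for $K$. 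The remaining ingredient is monotonicity: whenever $\tau\subseteq\sigma'$ are families of cocompact subgroups, $\Box_\tau$ is a disjoint sub-union of $\Box_{\sigma'}$ and hence an (isometric) metric subspace, so subspace permanence of asymptotic dimension \cite[Theorem 6.2]{g} yields ${\rm asdim}(\Box_\tau)\leq {\rm asdim}(\Box_{\sigma'})$. Applying this to $\sigma_1\subseteq max$ and $\sigma_2\subseteq max$ gives ${\rm asdim}(\Box_{\sigma_1} N)\leq {\rm asdim}(\Box_{max} N)$ and ${\rm asdim}(\Box_{\sigma_2} K)\leq {\rm asdim}(\Box_{max} K)$, and chaining these with the displayed inequality closes the argument.

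The main obstacle is bookkeeping about countability rather than any deep new idea: when $max$ is uncountable the box spaces are not metrizable, so every inequality above must be read in the total box space framework discussed after Lemma \ref{ccpt}. There both the conclusion of Lemma \ref{exact3} and the subspace permanence of \cite{g} persist, and Lemma \ref{ccpt} identifies the total box dimension with the supremum over countable subfamilies, which is precisely the quantity that subspace monotonicity controls, so the reduction is legitimate. A secondary delicacy is that the images $\pi(G_\alpha)$ need not be closed in $K$ (for instance when $N$ is an irrational line in $\mathbb{R}^2$ and $G_\alpha=\mathbb{Z}^2$); however this is already absorbed into the definition of $\sigma_2$ in Lemma \ref{exact3}, where these images are treated as cocompact subgroups of $K$ via the quotient-metric construction, so no additional work is needed here.
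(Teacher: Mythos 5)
Your proposal is correct and follows essentially the same route as the paper: apply Lemma \ref{exact3} with $\sigma=max$ and then use that the induced families $\sigma_1,\sigma_2$ are subfamilies of the full cocompact families of $N$ and $K$, so their box spaces have no larger asymptotic dimension. The extra checks you supply (cocompactness of $N\cap G_\alpha^g$ and $\pi(G_\alpha)$ via Lemma \ref{exact2}(iii), and the total-box-space reading when $max$ is uncountable) are details the paper leaves implicit, but the argument is the same.
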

\begin{proof}
	As in the above lemma, $max$ induces families $max_1$ of (normal) subgroups of $N$ and $max_2$ of (normal) subgroups of $K$. Since  
	${\rm asdim}(\Box_{max_1} N)\leq {\rm asdim}(\Box_{max} N)$, and the same for $K$ and $max_2$, the inequality follows from Lemma \ref{exact3}.    
\end{proof}

Now we are ready to prove the second main result of this section. 

\vspace{.3cm}
\begin{theorem}  \label{main4}
	If $G$ is topologically elementary amenable (TEA) and residually compact (RC), then for any semi-conjugacy-separating
	family $\sigma$ of cocompact subgroups of $G$, 
	$${\rm asdim}(G)\leq {\rm asdim}(\Box_\sigma G) \leq {\rm asdim}(\Box_{max} G) \leq h(G).$$
\end{theorem}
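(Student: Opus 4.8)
The plan is to establish the three inequalities from left to right, leveraging the machinery built up in the preceding lemmas. The leftmost inequality, $\mathrm{asdim}(G)\leq \mathrm{asdim}(\Box_\sigma G)$, is precisely the content of Lemma \ref{rc3}, which applies because $G$ is $RC$ and equipped with a plig metric, and $\sigma$ is assumed semi-conjugacy-separating. So the first step is simply to invoke Lemma \ref{rc3} directly; no new work is required there.

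For the middle inequality, $\mathrm{asdim}(\Box_\sigma G)\leq \mathrm{asdim}(\Box_{max} G)$, I would argue that enlarging the family of cocompact subgroups can only increase (or leave unchanged) the asymptotic dimension of the associated box space, since $\sigma$ is a subfamily of $max$, the family of \emph{all} cocompact subgroups. The monotonicity here should follow from the subspace/union permanence properties of asymptotic dimension already cited after Lemma \ref{ccpt} (from \cite[Theorems 6.2, 6.3]{g}), together with Lemma \ref{ccpt} which reduces everything to countable subfamilies; every quotient appearing in $\Box_\sigma G$ also appears in $\Box_{max} G$, so $\Box_\sigma G$ is coarsely a subspace of the total box space $\Box_{max} G$.

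The rightmost inequality, $\mathrm{asdim}(\Box_{max} G)\leq h(G)$, is the substantive part and is where I would spend most of the effort. The natural approach is transfinite induction on the ordinal $\alpha$ with $G\in\mathfrak{X}_\alpha$, mirroring the proof of Theorem \ref{main3}. The base case $G\in\mathfrak{X}_1$ (abelian-by-compact) should give $\mathrm{asdim}(\Box_{max} G)\leq h(G)$ via Proposition \ref{va} combined with the fact that for such groups a dominating/standard box space computes the asymptotic dimension. For the inductive step one writes $G$ as an extension $1\to N\to G\to K\to 1$ with $N$ cocompact normal in an appropriate $\mathfrak{X}_\alpha$ and $K\in\mathfrak{X}_1$, and then applies Lemma \ref{exact4} to obtain
\[
\mathrm{asdim}(\Box_{max} G)\leq \mathrm{asdim}(\Box_{max} N)+\mathrm{asdim}(\Box_{max} K),
\]
so that the induction hypothesis bounds each summand by $h(N)$ and $h(K)$ respectively, while the Hirsch formula $h(G)=h(N)+h(K)$ (Lemma \ref{hf}, cocompact case) closes the loop. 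One also has to handle the $L\mathfrak{X}_\alpha$ (local) and limit-ordinal stages, but these reduce to the compactly generated case by taking suprema over compactly generated closed subgroups, as in Theorem \ref{main3}, together with Lemma \ref{ccpt}.

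The hard part will be the rightmost inequality, and within it the careful bookkeeping of which box families $\sigma_1$ and $\sigma_2$ are induced on $N$ and $K$ by $max$, and verifying that $\mathrm{asdim}(\Box_{max_1}N)\leq\mathrm{asdim}(\Box_{max}N)$ (the content already extracted in Lemma \ref{exact4}) genuinely lets the induction hypothesis apply to the \emph{full} $max$ families on the subquotients rather than the induced ones. A subtle point is that in the non-compactly-generated case the box space is no longer metrizable and one must pass to the total box space, so I would need to confirm that all permanence properties and the uniform coarse equivalence of Lemma \ref{exact3} survive in that generality, which is exactly the caveat recorded after Lemma \ref{ccpt}.
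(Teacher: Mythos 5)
Your proposal is correct and follows essentially the same route as the paper: the first inequality is quoted from the earlier result on semi-conjugacy-separating families (the paper cites Proposition \ref{asdim}, you cite Lemma \ref{rc3}, which is the more directly applicable version), the middle inequality is the trivial monotonicity of box-space asymptotic dimension under enlarging the family, and the last inequality is obtained by transfinite induction on the stage $\alpha$ with $G\in\mathfrak X_\alpha$, using Proposition \ref{va} for the base case, Lemma \ref{exact4} for the extension step, and the Hirsch formula $h(G)=h(N)+h(K)$. The bookkeeping caveats you flag (induced versus full $max$ families on subquotients, and passing to the total box space in the non-metrizable case) are exactly the points the paper handles via Lemma \ref{exact4} and the remark following Lemma \ref{ccpt}.
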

\begin{proof}
	The first inequality is proved in Lemma \ref{rc3}, and the second is trivial. For the last inequality, note that for group extensions by compact or cocompact normal subgroups, we have  inequality of Lemma \ref{exact4} for asymptotic dimensions, and  equality 
	$h(G) = h(N) + h(K)$ for the Hirsch length. Also we have the desired inequality when $G$ is in class $\mathfrak X_1$, by Proposition \ref{va}. Now the result follows by transfinite induction.
	\end{proof}

\begin{corollary}  \label{equality}
	In any of the following cases we have the equalities 
	$${\rm asdim}(G)= {\rm asdim}(\Box_\sigma G)=h(G),$$
	for any semi-conjugacy-separating
	family $\sigma$ of cocompact subgroups of $G$:
	
	$(i)$ $G$ is  residually compact and a semidirect product of a locally elliptic group by a polycyclic-by-compact group, 
	
	$(ii)$ $G$ is the wreath product of a compact abelian group with a polycyclic-by-compact group.  
\end{corollary}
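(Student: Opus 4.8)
The plan is to deduce both cases from the machinery already assembled, principally Theorem \ref{main4} (which supplies the chain of inequalities ${\rm asdim}(G)\leq{\rm asdim}(\Box_\sigma G)\leq{\rm asdim}(\Box_{max}G)\leq h(G)$ for any TEA, RC group and any semi-conjugacy-separating $\sigma$) together with Proposition \ref{asdim}(i) and the sharp lower bound ${\rm asdim}(G)=h(G)$ for polycyclic-by-compact groups from Proposition \ref{pcc}. Since both the given groups are extensions (a semidirect product, or a wreath product which is a particular kind of semidirect product) with TEA building blocks, the first task in each case is to verify that $G$ itself is TEA and RC so that Theorem \ref{main4} applies; for TEA this is immediate because locally elliptic groups and compact abelian groups lie in the base classes and the wreath/semidirect constructions are captured by the closure of [TEA] under extensions (and, for the wreath product, under increasing unions to handle the direct-sum base). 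Given this, the three inequalities of Theorem \ref{main4} collapse the problem to proving the single reverse inequality $h(G)\leq{\rm asdim}(G)$, since everything in the displayed chain is then squeezed between $h(G)$ and ${\rm asdim}(G)$.

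For part $(i)$, I would argue as follows. Write $G=E\rtimes Q$ with $E$ locally elliptic and $Q$ polycyclic-by-compact. Since $E$ is locally elliptic it is the locally elliptic radical's candidate: ${\rm asdim}(E)=0$ and, crucially, $h(E)=0$ (a locally elliptic group has every compactly generated subgroup compact, hence Hirsch length zero). By the Hirsch formula (the splitting being exactly the compact/cocompact-normal situation covered by Lemma \ref{hf}, or by the additivity recorded for TEA groups in Section \ref{ea}), $h(G)=h(E)+h(Q)=h(Q)$. On the other hand the splitting means $Q$ embeds as a closed subgroup of $G$, so by monotonicity of asymptotic dimension under closed subgroups ${\rm asdim}(G)\geq{\rm asdim}(Q)=h(Q)$, where the last equality is Proposition \ref{pcc}. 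Combining, $h(G)=h(Q)={\rm asdim}(Q)\leq{\rm asdim}(G)\leq h(G)$, forcing equality throughout; then Theorem \ref{main4} pins ${\rm asdim}(\Box_\sigma G)$ to the same value for every semi-conjugacy-separating $\sigma$. In essence this is the equality clause of Lemma \ref{exact} (split extension with locally elliptic kernel) combined with the polycyclic-by-compact sharpness.

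For part $(ii)$, let $G=A\wr Q=\big(\bigoplus_{q\in Q}A\big)\rtimes Q$ with $A$ compact abelian and $Q$ polycyclic-by-compact. The base group $B:=\bigoplus_{q\in Q}A$ is a directed increasing union of finite products $A^{F}$ ($F$ ranging over compact, equivalently finite after discretization, subsets of the orbit), each of which is compact; hence $B$ is locally elliptic, ${\rm asdim}(B)=0$, and $h(B)=0$. So once I check that $G$ is TEA and RC, part $(ii)$ reduces to exactly the situation of part $(i)$ with $E=B$: the Hirsch formula gives $h(G)=h(Q)$, the retraction onto $Q$ realizes $Q$ as a closed subgroup giving ${\rm asdim}(G)\geq{\rm asdim}(Q)=h(Q)$, and Theorem \ref{main4} supplies the matching upper bounds. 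I expect the main obstacle to be the bookkeeping in $(ii)$ verifying that the wreath product $G$ is genuinely residually compact and carries a plig metric (so that box spaces and Theorem \ref{main4} are in force): one must confirm that the restricted direct sum topology yields a second-countable, compactly generated locally compact group when $Q$ is, and exhibit enough cocompact subgroups (for instance built from the finite-support structure together with a regular RC-approximation of $Q$ from Proposition \ref{rc2}) to see it is RC; once RC and TEA are secured, the equalities follow mechanically from the squeeze. A secondary technical point, common to both parts, is confirming that $h(E)=0$ (respectively $h(B)=0$) for locally elliptic groups, which I would record as a one-line consequence of the characterization that compactly generated locally elliptic groups are compact.
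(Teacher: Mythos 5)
Your argument is essentially the paper's: part (i) is the equality clause of Lemma \ref{exact} (split extension with locally elliptic kernel) combined with Proposition \ref{pcc} and the Theorem \ref{main4} squeeze for the box space, and part (ii) reduces to part (i) once the base group $\bigoplus_{q\in Q}A$ is seen to be locally elliptic and $G$ is seen to be residually compact. The only step you flag but do not carry out — residual compactness of the wreath product — is precisely the one the paper supplies, by adapting Gruenberg's argument from \cite[Theorem 3.2]{gr}.
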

\begin{proof}
Part $(i)$ follows from Lemma \ref{exact} and Proposition \ref{pcc}. For part $(ii)$,  observe that by an argument similar to the proof of \cite[Theorem 3.2]{gr}, $G$ is residually compact in this case. Now apply part $(i)$.	
	\end{proof}

\section{Totally disconnected groups} \label{td}

The class of totally disconnected locally compact second countable (t.d.l.c.s.c.) groups are recently subject of study as an important subclass of  Polish groups (separable and completely metrizable topological groups). The second countability assumption is not that essential, as every t.d.l.c.
group is a directed union of open subgroups which are s.c. modulo a compact normal
subgroup \cite[8.7]{hr}. The basic advantage of being t.d. is that by a classical result of van Dantzig  a t.d.l.c. group admits a local basis at identity of compact
open subgroups \cite[7.7]{hr} (showing that compact t.d.l.c. groups are profinite and so are compact open subgroups of  t.d.l.c. groups). Platonov has shown that a t.d.l.c.s.c. group, in which every finitely generated
subgroup is relatively compact, may be written as a countable increasing union of profinite
groups \cite{pl}. Also a residually discrete t.d.l.c.s.c. group may be written as a countable
increasing union of [SIN]-groups \cite{cm}.
These two results make the subclass of t.d.l.c.s.c. groups built from profinite and discrete groups specially interesting. One should ad to these evidences the fact that  kernel
of the adjoint representation of a $p$-adic Lie group belongs to this subclass \cite{gw}. 

Let us define the above subclass more rigorously. The class of {\it elementary groups} is the smallest class $\mathscr{E}$ of t.d.l.c.s.c. groups which contains all s.c. profinite groups and countable discrete groups, and is closed under taking topological group extensions by s.c. profinite groups and countable discrete
groups and increasing countable unions (of open subgroups) \cite[1.1]{we}. This class is known to be closed under group extensions, taking closed subgroups and quotients by closed normal subgroups, inverse limits, quasi-products, and local direct products, as long as the resulting group remains t.d.l.c.s.c. Also it passes from a cocompact subgroup to a t.d.l.c.s.c. group \cite[1.3]{we}. 

Each t.d.l.c.s.c. group $G$ has a unique maximal (resp. minimal) closed normal subgroup ${\rm Rad}_{\mathscr E}(G)$  (resp. ${\rm Res}_{\mathscr E}(G)$) whose quotient group is
elementary \cite[1.5]{we}.  This is called the {\it elementary radical} (resp. {\it residual}) of $G$. Also, a t.d.l.c.s.c. group with an open solvable subgroup, is known to be elementary \cite[1.11]{we}. Moreover, the  structural results of Wesolek reduces the study
of compactly generated t.d.l.c.s.c. groups  to that of elementary groups and topologically characteristically simple non-elementary groups \cite[1.6, 1.7]{we}. 
Since the topological version of the second isomorphism theorem is valid for t.d.l.c.s.c. groups \cite[2.2]{we} (c.f., \cite[5.33]{hr}), the intersection of two closed cocompact subgroups of a t.d.l.c.s.c. group is again cocompact. In particular, the family of all closed cocompact subgroups is filtering in the sense of Caprace and Monod, and it follows from  \cite[2.5]{cm}, that if a compactly generated t.d.l.c.s.c. group is residually compact (RC), then each RC-approximation of $G$ includes a subgroup which is compact-by-discrete (c.f., \cite[2.6]{we}). 

In a t.d.l.c.s.c. group $G$, the quasi-center $QZ(G)$ of $G$ consists of elements with open centralizer.
This is a (not necessarily closed) characteristic subgroup  \cite{bm}.  For the discrete residual Res$(G)$ of $G$, defined as the intersection of all open normal
subgroups, the quotient $G/{\rm Res}(G)$ is a countable increasing
union of [SIN]-groups \cite[2.7]{we}. The locally elliptic radical $\Lambda(G)$ of Platonov also plays an important role in t.d.l.c. groups and could be characterized as the $\mathcal C$-core , for the the collection $\mathcal C$ of all compact subgroups. Also, when $G$ is t.d.l.c.s.c., it has a   closed characteristic subgroup SIN$(G)$ containing $QZ(G)$, which is an increasing union of
compactly generated relatively open [SIN]-subgroups.

Elementary groups contain t.d.l.c.s.c. which are either [SIN]-groups,  solvable, locally elliptic t.d.l.c.s.c., or contain a compact open subgroup that has a dense quasi-center.
The class $\mathscr E$ of elementary t.d.l.c. groups could be constructed by a transfinite inductive process: let $\mathscr E_0$ be the trivial class and  $\mathscr E_1$ be the class of profinite or discrete groups. Suppose that $\mathscr E_\alpha$ is defined and put $\mathscr E_{\alpha+1}=(L\mathscr E_\alpha)\mathscr E_1$ (using the notation of the previous section). Finally, for a limit ordinal $\beta$, put $\mathscr E_\beta:=\cup_{\alpha<\beta} \mathscr E_\alpha$. Then 
$\mathscr E=\cup_{\alpha<\omega_1} \mathscr E_\alpha$ \cite[page 1396]{we}. We define the {\it construction rank} of $G\in \mathscr E$ by rk$(G):=\min\{\alpha: G \in\mathscr E_\alpha\}.$ If rk$(G)>1$ and $G$ is compactly generated, then $G$ is a group extension of either
a profinite group or a discrete group by an elementary group of strictly lower rank. Also if $G \in\mathscr E$, then so is any open subgroup $H$ and  rk$(H)\leq$ rk$(G)$. Similarly, if $G \in\mathscr E$, then so is the quotient by any compact subgroup $K$ and  rk$(G/K)\leq$ rk$(G)$ (note that the quotient $G/K$ remains elementary, even if $K$ is not compact \cite[3.12]{we}). If G is t.d.l.c.s.c. and H is a normal subgroup with $H, G/H \in \mathscr E$, then so is $G$ 
and rk$(G)\leq$ rk$(H) +$ rk$(G/H)$. Finally if $H$ is a closed subgroup, rk$(H)\leq$ 3rk$(G)$ \cite[3.2-3.8]{we} (note that in \cite{we}, the right hand sides of the last two inequalities are added with 1 and 3, but we avoid this by shifting the construction rank by 1). If $G$ is a locally solvable t.d.l.c.s.c. group, then $G$ is elementary with rk$(G)\leq4^n$, where $n$ is the solvable rank of $G$  (i.e.,  the minimum of the derived length of solvable compact open subgroups of $G$) \cite[8.1]{we}. In particular,  non-discrete, compactly generated,
t.d.l.c. topologically simple group are not locally solvable (c.f., \cite[Theorem 2.2]{wi}). As a concrete case of rank calculation, let $G$ be residually discrete, Res$(G) = \{1\}$, and $G$ is a countable increasing union of
open [SIN]-groups. A [SIN]-group is compact-by-discrete and so is elementary with rank at
most 2, thus rk$(G)$ is at most 3. 

For a class $\mathscr G$ of t.d.l.c.s.c. groups, the elementary closure $\mathscr{EG}$ of $\mathscr G$ is the smallest class of t.d.l.c.s.c. groups containing $\mathscr G$ and  all s.c. profinite groups and countable discrete groups, which is closed under group extensions of s.c. profinite groups, countable discrete groups,
and groups in $\mathscr G$ and closed under
countable increasing unions \cite[3.19]{we}. An example is the class of all l.c.s.c. $p$-adic Lie groups for all
primes $p$ \cite[Theorem 6.3]{we2}.

As far as we know, there is no result on the asymptotic dimension of 
 t.d.l.c.s.c. groups or elementary groups. In this section we give some results in this direction. 
 A {\it subquotient} of a topological group is a closed subgroup of a quotient of $G$ by a closed normal subgroup. This includes both closed subgroups and quotients by closed normal subgroups at the same time. When the group has a plig metric,  any subquotient has a canonical induced plig metric. If the asymptotic dimension of the group is finite, so is that of any subquotient, but a group with infinite asymptotic dimension may have many subquotients with finite asymptotic dimension. The key fact, we are going to show, is that for t.d.l.c.s.c. groups, the asymptotic dimension of the group is controlled by the asymptotic dimension of its discrete subquotients.

As we already mentioned, locally solvable groups are elementary with finite construction rank. It is known (and indeed shown by von Neumann himself in \cite{v}) that locally solvable groups are amenable. As these are not necessarily elementary amenable, the bounds in previous section could not be applied. Unfortunately, the above result also does not effectively apply in this case, as a locally solvable group is not finitely generated unless it is solvable. However, for a locally solvable group, the Hirsch length dominates the asymptotic dimension, but the former (which is the supremum of the Hirsch length of finitely generated solvable subgroups) may blow up. 

In the compactly generated case, we have the following characterization of groups with finite asymptotic dimension, which the  main result of this section. In the next result, by a ``discrete quotient'' of $G$ we mean the homogeneous space $G/H$ for an open subgroup $H$. When $H$ is also normal, $G/H$ is a discrete group.    

\vspace{.3cm}
\begin{theorem}  \label{main5}
	Let $G$ be a  t.d.l.c.s.c. compactly generated group and consider the following conditions:
	
	$(i)$ $G$ has finite asymptotic dimension,
	
	$(ii)$ $G$ has a discrete quotient with finite asymptotic dimension, 
	
	$(iii)$ every discrete quotient of $G$ has finite asymptotic dimension.
	
	Then $(i)\Rightarrow (ii)$ and $(iii)\Rightarrow  (i)$. If moreover, all open subgroups of $G$ are compact, then these conditions are equivalent. When $G$ is also a {\rm [SIN]}-group, in parts $(ii)$ and $(iii)$ quotients could be taken to be finitely generated quotient groups (i.e., quotients by open normal subgroups). 
\end{theorem}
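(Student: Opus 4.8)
The plan is to reduce every assertion to a single coarse-geometric observation: if $U \leq G$ is a compact open subgroup, then, for a (left-invariant) plig metric $d$ on $G$ and the induced quotient metric $D$ on $G/U$, the quotient map $q \colon G \to G/U$ is a coarse equivalence, so that $\mathrm{asdim}(G/U) = \mathrm{asdim}(G)$. To establish this I would check three things. First, $q$ is $1$-Lipschitz and surjective. Second, its fibres $gU$ have $d$-diameter at most $2\sup_{u \in U} d(1,u)$, which is finite because $U$ is compact and $d$ is continuous; this makes $q$ bornologous with uniformly bounded fibres. Third, $D(gU, g'U) \leq R$ yields, by compactness of $U$, points $u,u' \in U$ with $d(gu, g'u') \leq R$, whence $d(g,g') \leq R + 2\sup_{u \in U} d(1,u)$; this effective-properness bound is the only delicate estimate and must be carried out with the left/right coset convention matched to the invariance side of $d$. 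Since $G$ is totally disconnected, van Dantzig's theorem \cite[7.7]{hr} supplies a compact open $U$, and $G/U$ is discrete because $U$ is open, so $G/U$ is a discrete quotient in the sense of the theorem.

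Granting this lemma, the two general implications are immediate. For $(i)\Rightarrow(ii)$ I would pick any compact open $U$; then $G/U$ is a discrete quotient with $\mathrm{asdim}(G/U) = \mathrm{asdim}(G) < \infty$. For $(iii)\Rightarrow(i)$ I would again pick a compact open $U$: hypothesis $(iii)$ applied to the discrete quotient $G/U$ gives $\mathrm{asdim}(G/U) < \infty$, and the lemma upgrades this to $\mathrm{asdim}(G) = \mathrm{asdim}(G/U) < \infty$. If in addition every open subgroup of $G$ is compact, then every discrete quotient $G/H$ has $H$ open and hence compact, so the lemma applies to all of them and gives $\mathrm{asdim}(G/H) = \mathrm{asdim}(G)$. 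In that case each of $(i)$, $(ii)$, $(iii)$ is literally the single statement $\mathrm{asdim}(G) < \infty$, so the three are equivalent.

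For the [SIN] refinement the plan is to replace compact open subgroups by compact open \emph{normal} ones. I would first show that a totally disconnected [SIN]-group has a neighbourhood basis at $1$ of compact open normal subgroups: starting from a compact open $U$ and a conjugation-invariant compact neighbourhood $V \subseteq U$ of $1$ furnished by the [SIN] property, the core $N := \bigcap_{g \in G} gUg^{-1}$ contains $V$ (because $V = gVg^{-1} \subseteq gUg^{-1}$ for every $g$), hence is a subgroup containing a neighbourhood of $1$ and so is open; it is normal as an intersection of conjugates and compact as a closed subset of $U$. Then $G/N$ is a genuine quotient \emph{group}, discrete since $N$ is open and finitely generated since $G$ is compactly generated and $N$ is compact, and the lemma gives $\mathrm{asdim}(G/N) = \mathrm{asdim}(G)$. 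Rerunning the arguments for $(i)\Rightarrow(ii)$ and $(iii)\Rightarrow(i)$ with this $N$ shows that in parts $(ii)$ and $(iii)$ the quotients may be taken to be quotients by open normal subgroups, i.e.\ finitely generated discrete groups.

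The substantive content is concentrated in the first paragraph: once the coarse equivalence $\mathrm{asdim}(G/U) = \mathrm{asdim}(G)$ is in place for compact open $U$, together with its normal refinement for [SIN]-groups, the remainder is a matter of selecting the right subgroup and invoking the appropriate hypothesis. I expect the effective-properness estimate, and keeping the coset side consistent with the invariance of the chosen plig metric, to be the only place demanding real care.
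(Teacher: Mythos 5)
Your proposal is correct and follows essentially the same route as the paper: both reduce everything to the fact that for a compact open subgroup $U$ (supplied by van Dantzig) the quotient $G\to G/U$ is a coarse equivalence, so ${\rm asdim}(G)={\rm asdim}(G/U)$, and then select $U$ normal in the [SIN] case. The only difference is presentational — you verify the coarse equivalence by hand and construct the compact open normal subgroup as a core, where the paper invokes the topological Milnor--\v{S}varc lemma and the known existence of a basis of compact open normal subgroups for t.d.l.c.\ [SIN]-groups.
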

\begin{proof}
	We employ van Dantzig theorem (ensuring that $G$ always have open compact subgroups; c.f., \cite[7.7]{hr}) and a topological version of Milnor-\v{S}varc lemma: If $X$  pseudo-metric space and a locally compact group $G$ acts on $X$ with orbit map $\iota_x: G\to X; g\mapsto gx$, so that the action is geometric, namely it is isometric, cobounded (i.e., $X$ could be covered with translate of a subset with finite diameter), locally bounded (i.e., for a bounded subset $B$ of $X$, each $g\in G$ has a neighborhood $V$ with $VB$ bounded in $X$) and metrically proper (i.e., $\iota_x^{-1}(B_r(x))$ is relatively compact in $G$, for each $x\in X$ and each $r>0$), then $G$ is $\sigma$-compact and $\iota_x$ is a quasi-isometry, for each $x\in X$. If moreover, $X$ is coarsely-connected (i.e., for some $c>0$, between each two points in $X$ one could insert finitely many points with distance of consecutive ones at most $c$), then $G$ is also compactly generated (c.f., \cite[4.C.5]{ch}). Now if $H\leq G$ is a compact open  subgroup, then the quotient space $G/H$ is paracompact. In particular, the translation action of $G$ on $G/H$ is geometric (c.f., \cite[4.C.8]{ch}), and so by Milnor-\v{S}varc lemma, asdim$(G)\leq$ asdim$(G/H)$. Indeed, in this case we have asdim$(G)=$ asdim$(G/H)$, as the orbit map is just the quotient map, which is surjective. This shows that $(i)$ implies $(ii)$ and $(iii)$ implies $(i)$. If all open subgroups of $G$ are compact, then all discrete quotients of $G$ have the form $G/H$, for some compact open subgroup $H$, thus  $(i)$ implies $(iii)$. Finally, when $G$ is   a  t.d.l.c.s.c. compactly generated {\rm [SIN]}-group, it has a local basis consisting of compact open normal subgroups, and in the above argument, $H$ could always be chosen to be normal as well. The rest works as above. 
\end{proof}

\begin{corollary}  \label{resdisc}
	Let $G$ be a  t.d.l.c.s.c. compactly generated group all of whose discrete quotients are finite. Then 
	${\rm asdim}(G)={\rm asdim}Res(G)<\infty.$ 
\end{corollary}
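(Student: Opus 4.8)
The plan is to deduce the finiteness of ${\rm asdim}(G)$ directly from Theorem \ref{main5}, then to identify $Res(G)$ as a closed cocompact normal subgroup, so that the two asymptotic dimensions agree by a coarse-equivalence (cocompactness) argument.

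First I would dispose of finiteness. Every discrete quotient of $G$ is, by hypothesis, finite, hence bounded, and a bounded metric space has asymptotic dimension $0$; in particular every discrete quotient of $G$ has \emph{finite} asymptotic dimension. Thus condition $(iii)$ of Theorem \ref{main5} holds, and the implication $(iii)\Rightarrow(i)$ of that theorem yields ${\rm asdim}(G)<\infty$.

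Next I would show that $Res(G)$ is cocompact, i.e.\ that $G/Res(G)$ is compact. By definition $Res(G)$ is the intersection of all open normal subgroups of $G$, so $G/Res(G)$ is residually discrete; being a quotient of the compactly generated group $G$, it is also compactly generated \cite[2.C.8(4)]{ch}. By the structure theory for residually discrete groups (Caprace--Monod \cite{cm}; cf.\ \cite[2.7]{we}), a residually discrete t.d.l.c.s.c.\ group is a countable increasing union of open [SIN]-subgroups, and compact generation forces $G/Res(G)$ to coincide with one of them, hence to be itself a compactly generated [SIN]-group. Such a group is compact-by-discrete, so it has a compact open normal subgroup $K$ with $(G/Res(G))/K$ discrete. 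Now $(G/Res(G))/K$ is a discrete quotient of $G$ (namely the quotient by the open normal preimage of $K$), hence finite by hypothesis; therefore $G/Res(G)$, being covered by finitely many cosets of the compact group $K$, is compact.

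Finally I would combine the two steps. Since $Res(G)\trianglelefteq G$ is closed, normal and cocompact with $G/Res(G)$ compact (so ${\rm asdim}(G/Res(G))=0$), Lemma \ref{exact} applied to $1\to Res(G)\to G\to G/Res(G)\to 1$ gives ${\rm asdim}(G)\le {\rm asdim}(Res(G))$. Conversely, $Res(G)$ is a closed subgroup of $G$, so ${\rm asdim}(Res(G))\le {\rm asdim}(G)$ (asymptotic dimension is monotone under passing to closed subquotients, as recorded at the beginning of this section). Hence ${\rm asdim}(G)={\rm asdim}(Res(G))$, and this common value is finite by the first step. I expect the main obstacle to be the second step: producing a compact open normal subgroup of $G/Res(G)$ and controlling its discrete quotient, which is precisely where the Caprace--Monod description of compactly generated residually discrete groups, combined with the hypothesis that discrete quotients are finite, does the essential work.
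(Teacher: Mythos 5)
Your proof is correct and follows essentially the same route as the paper: finiteness via the implication $(iii)\Rightarrow(i)$ of Theorem \ref{main5}, cocompactness of $Res(G)$ via Caprace--Monod, and then the equality ${\rm asdim}(G)={\rm asdim}(Res(G))$ from cocompactness. The only difference is that the paper simply cites \cite[Theorem F]{cm} for the cocompactness (and compact generation) of $Res(G)$, whereas you re-derive that cocompactness from the structure theory of compactly generated residually discrete groups (existence of a compact open normal subgroup of $G/Res(G)$) combined with the hypothesis that all discrete quotients are finite --- a valid and slightly more self-contained unpacking of the same citation.
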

\begin{proof}
	If all discrete quotients of $G$ are finite, then by implication $(iii)\Rightarrow (i)$ of Theorem \ref{main5}, $G$ has finite asymptotic dimension. Also in this case, $Res(G)$ \cite{ms} is a cocompact, compactly generated subgroup of $G$  by \cite[Theorem F]{cm}, thus ${\rm asdim}(G)={\rm asdim}Res(G)$. 	
\end{proof}

Note that by a deep result of Caprace and  Monod, for a compactly generated totally disconnected locally compact
group all of whose discrete quotients are finite, the discrete residual admits no non-trivial discrete or compact quotient \cite[Corollary G]{cm}. However, this condition on all discrete quotients is a rather strong condition, for instance in this case if $G$ is moreover residually discrete, then it is forced to be compact. A less restrictive situation is when $G$ is a  t.d.l.c.s.c. compactly generated [SIN]-group. In this case, $G$ is residually discrete, i.e., $Res(G)=\{1\}$ \cite[Corollary 4.1]{cm}, however, one could only deduce that $G$ is compact-by-discrete \cite[Theorem 2.13]{gm}, so it may have infinite asymptotic dimension.

\bibliography{sn-bibliography}

\end{document}